\definecolor{MyDarkblue}{rgb}{0,0.08,0.50}
\definecolor{Brickred}{rgb}{0.65,0.08,0}
\newtheorem*{theorem*}{Theorem}
\newtheorem{theorem}{Theorem}[section]
\newtheorem{lemma}[theorem]{Lemma}
\newtheorem{proposition}[theorem]{Proposition}
\newtheorem{corollary}[theorem]{Corollary}
\newtheorem{problem}[theorem]{Problem}
\newtheorem{definition}[theorem]{Definition}
\newtheorem{assumption}[theorem]{Assumption}
\newtheorem{remark}[theorem]{Remark}
\newtheorem{claim}[theorem]{Claim}
\newcommand{\Pv}{\mathbb{P}}
\newcommand{\Ev}{\mathbb{E}}
\newcommand{\cH}{\mathcal{H}}
\newcommand{\bE}{\mathbb{E}}
\newcommand{\bN}{\mathbb{N}}
\newcommand{\bP}{\mathbb{P}}\newcommand{\bR}{\mathbb{R}}
\newcommand{\sss}{\scriptscriptstyle}
\newcommand{\CE}{{\mathcal{E}}}
\newcommand*{\CMD}{{\mathrm{CM}}_n(\boldsymbol{d})}
\newcommand*{\CMDL}{{\mathrm{CM}}_n(\boldsymbol{d}, \boldsymbol{L})}
\newcommand*{\CMDP}{\mathrm{CM}_n^{p(d)}(\boldsymbol{d})}
\newcommand*{\ECMDL}{{\mathrm{ECM}}_n(\boldsymbol{d}, \boldsymbol{L})}
\newcommand{\e}{{\mathrm e}}
\numberwithin{equation}{section}
\newcommand{\R}{\mathbb{R}}
\newcommand{\N}{\mathbb{N}}
\newcommand{\Z}{\mathbb{Z}}
\newcommand{\equalsd}{\overset{d}{=}}
\newcommand{\CU}{\mathcal {U}}
\newcommand{\CV}{\mathcal {V}}
\newcommand{\CH}{\mathcal {H}}
\newcommand*{\ve}{\varepsilon}
\newcommand*{\al}{\alpha}
\newcommand*{\fl}[1]{\lfloor{#1}\rfloor}
\newcommand*{\be}{\begin{equation}}
\newcommand*{\ee}{\end{equation}}
\newcommand*{\ba}{\begin{aligned}}
\newcommand*{\ea}{\end{aligned}}
\newcommand*{\barr}{\begin{array}{c}}
\newcommand*{\earr}{\end{array}}
\def \toinp    {\buildrel {\Pv}\over{\longrightarrow}}
\def \toindis  {\buildrel {d}\over{\longrightarrow}}
\newcommand*{\wit}{\widetilde}
\newcommand*{\ind}{\mathbbm{1}}
\def\namedlabel#1#2{\begingroup
    #2%
    \def\@currentlabel{#2}%
    \phantomsection\label{#1}\endgroup
}
\newcommand{\dl}{d_L(u,v)}
\newcommand{\dle}{d_L^e(u,v)}
\newcommand{\he}{\mathcal{H}_n}
\newcommand{\FL}{F_L(x)}
\newcommand{\an}{\sum_{i=1}^{\left\lfloor \frac{\log\log n}{|\log(\tau-2)|} \right\rfloor} F_L^{\sss{(-1)}}\left(\mathrm{e}^{-\left(\frac{1}{\tau-2}\right)^i}\right)}
\newcommand{\Xk}{X_{k}^{\sss{(n)}}}
\begin{document}
	\title[Weighted distances in the configuration model]{
Weighted distances in scale-free configuration models}

	\date{\today}
	\subjclass[2010]{Primary: 60C05, 05C80, 90B15.}
	\keywords{Random networks, configuration model, scale-free,  power-law degrees, typical distances, first passage percolation}

	\author[Adriaans]{Erwin Adriaans}
	\address{Department of Mathematics and
	    Computer Science, Eindhoven University of Technology, P.O.\ Box 513,
	    5600 MB Eindhoven, The Netherlands.}
\author[Komj\'athy]{J\'ulia Komj\'athy}
	\email{e.l.a.adriaans@student.tue.nl, j.komjathy@tue.nl}

\begin{abstract}
In this paper we study first-passage percolation in the configuration model with empirical degree distribution that follows a power-law with exponent $\tau \in (2,3)$. We assign independent and identically distributed (i.i.d.)\ weights to the edges of the graph.  We investigate the weighted distance (the length of the shortest weighted path) between two uniformly chosen vertices, called typical distances.  When the underlying age-dependent branching process approximating the local neighborhoods of vertices is found to produce infinitely many individuals in finite time -- called explosive branching process -- Baroni, Hofstad and the second author showed in \cite{BarHofKomdist} that typical distances converge in distribution to a bounded random variable. The order of magnitude of typical distances remained open for the $\tau\in (2,3)$ case when the underlying branching process is not explosive.
We close this gap by determining the first order of magnitude of typical distances in this regime for arbitrary, not necessary continuous edge-weight distributions that produce a non-explosive age-dependent branching process with infinite mean power-law offspring distributions. 
 This sequence tends to infinity with the amount of vertices, and, by choosing an appropriate weight distribution,  can be tuned to be any growing function that is $O(\log\log n)$, where $n$ is the number of vertices in the graph.  
We show that the result remains valid for the the erased configuration model as well, where we delete loops and any second and further edges between two vertices.
\end{abstract}
\maketitle
\section{Introduction}
Every logistic company wants to be the fastest, cheapest and deliver on time. In order to achieve this, the routes they are driving should be (near-) optimal, meaning they should be the least costly and fastest for them in order to be competitive. This is just an example where weighted distances in a network play an important role. Other examples include the spreading of epidemics through society, the spreading of rumours, videos and advertisement through (online) social network, and several other processes spreading on the internet.  

The recent interest in understanding complex networks and processes on these networks motivates the study of more and more elaborate models for these (weighted) networks. The analysis of processes on these models often reveal finer topological aspects of the models themselves.  And, vice versa, the organisation and topology of a network affect the behaviour of different processes on the network. Many real-life networks turn out to share some common properties, one of them being  that the degree distribution follows a power-law  \cite{Falo1999, Newm01}, examples include the world-wide web \cite{BarAlbJeo00}, the movie-actor collaboration network \cite{Bara1999}, the network of citations of scientific publications \cite{Red98}, and many more. Another common property is the small-world phenomenon, popularized by Millgram \cite{Milg67} as: ``everyone on this planet is separated from anyone else by only six people''. Mathematically speaking, a network exhibits the small-world property if the minimal amount of connections to go from one node to another is of order $\log(n)$ or $\log\log(n)$ for ultra- small worlds, with $n$ the amount of nodes in the network. This effect is not only seen in social networks, but also in neurological networks like the brain \cite{Acha2006, BulSpo09} or food webs \cite{MonSol02}. A third common property is clustering as pointed out by Watts and Strogatz \cite{WatStr98}. High clustering means that two vertices in the graph are more likely to be connected to one another when they have a common neighbor. This is a common feature in e.g.\ social networks.


The natural way to model a network from a mathematical point of view is to see this as a graph, 
where nodes are represented by vertices and their connections by edges. Since real-life networks are large, models often involve randomness to determine the presence of edges between the vertices. 
Random graph models that incorporate the (first two) above mentioned properties often serve as null-models for the analysis of real-life networks. Examples include variation of inhomogeneous random graphs such as 
the Chung-Lu or Norros-Reitu model \cite{ChuLu01, Reittu:2004}, the configuration model \cite{BenCan78, Boll80}, and the preferential attachment model \cite{AlbBar02}. Spatial variants are introduced to incorporate clustering, e.g. hyperbolic random graphs \cite{BogPapKri10}, geometric inhomogeneous random graphs \cite{BriKeuLen15}, scale-free percolation \cite{DeiHof13}, spatial preferred attachment \cite{AieBon08, JacMor13}, etc.  

When modeling the spread of information in a network, edge weights to the edges can be added that represent the passage time of the information through the edge.  The weighted distance is then the weight of the path with smallest total weight, corresponding to the passage time of the information from one vertex to the other.  When the edge-weights are i.id., the study of the resulting weighted graph is often called \emph{first-passage percolation} (FPP). Introduced by Hammersley and Welsh \cite{HamWel65} for the grid $\Z^d$, FPP can be seen as a flow, starting from a vertex, flowing through the edges at a rate equal to the respective edge-weights, the weighted distance corresponding to the time it takes the front of the flow to reach the other vertex. 

First passage percolation has been studied on the Erd{\H o}s-R\'enyi random graph see \cite{BHH11}, on inhomogeneous random graphs see \cite{KolKom15}.  FPP on the configuration model with finite mean degrees for exponential edge-weights is treated in \cite{BHH10}, with finite variance degrees (i.e., power law exponent at least $3$) and arbitrary edge-weight distributions in \cite{bhamidi2017}, and for infinite variance degrees (power-law exponent $\in (2,3)$) for a class of edge-weights \cite{BarHofKomdist}.  In particular, \cite{BarHofKomdist} determines the weighted distance when the edge-weights fall into what they call the \emph{explosive} class. In this case, weighted distances converge in distribution (see Theorem \ref{thm:explosive} below), which heuristically means that regardless of how large the size of the network gets, the average weighted distance in the networks \emph{stays bounded}. This explains the observed phenomena of extremely fast information spread in e.g.\ online networks such as meme spreading or viral spreading. 
The other class, where the weight distribution is `non-explosive' is further studied in \cite{BarHofKom16}, for the special case  when the edge weights are of the form $1+X$. For this case \cite{BarHofKom16} shows that the weighted distance is tight around the typical graph distance (that is, $2\log \log n/|\log (\tau-2)|$, where $\tau$ is the power-law exponent), if and only if the extra weight $X$ falls into the explosive class.  

 In this paper we  investigate the missing case, i.e., FPP on the configuration model with infinite variance degrees (power-law exponent $\in(2,3)$ and i.i.d.\  edge-weights that fall into the `non-explosive' class. We determine the first order of weighted distance in the highest generality, thus, together with \cite{BarHofKom16}  providing an (almost) full picture of weighted distances in the $\tau \in (2,3)$ case. We also extend our results to the \emph{erased configuration model}, when only one edge of every multiple edge is kept. 

\emph{Structure.} 
In the next section we introduce the configuration model and state our results, as well as discuss related results and open problems.  In Section \ref{s:couple} we develop a coupling to branching processes (BPs), and state and prove some ingredient lemmas about the degrees and weighted distances within these BPs. In Section \ref{s:perc} we develop a crucial tool to prove the upper bound of the main result, degree-dependent percolation.  In Section \ref{s:proof}, we prove the main result and extend it to the erased configuration model. 

\emph{Notation.} We say that a sequence of events $(\CE_n)_{n \in \bN}$ holds with high probability (whp) if $\lim_{n \to \infty} \bP(\CE_n) = 1$. For a sequence of random variables $(X_n)_{n \geq 1}$,  we say than $X_n$ converges in probability to a random variable X,  shortly $X_n  \toinp X$,  if for all $\ve > 0, \lim_{n \to \infty} \bP(|X_n-X| > \ve ) = 0$. Similarly, we say that $X_n$ converges in distribution to a random variable X,  shortly $X_n  \toindis X$,  if  $\lim_{n \to \infty}\bP(X_n \le x) \to \bP(X \le x)  $ for all $x\in \R$ where $\bP(X \le x) $ is continuous.
 For a non-decreasing right-continuous function $F(x)$ the generalised inverse of $F$ is defined as $F^{(-1)}(x) := \inf\{y \in \bR : F(y) \geq x \}$. For an edge $e=(x,y)$ we write $L_e$ for the associated edge-length on $e$.
We write lhs and rhs for the left-hand side and right-hand side, respectively.

\section{Model \& Results} 
In this section we introduce the weighted configuration model and present our results. Then we discuss related research and describe some open problems. 
\subsection{The model}
\label{sec:model} We consider the configuration model $\CMD$ on $n$ vertices with degree sequence $\boldsymbol{d} = \{d_1, \ldots ,d_n\}$. Let $\he := \sum_{v \in [n]} d_v$, the sum of the degrees with $[n]$ := $\{1, 2, \ldots, n\}$. If $\CH_n$ is odd we add an additional half-edge to vertex $n$, this does not further influence the analysis and we retain from discussing this issue further.
Given the degree sequence, the model is constructed as follows: To every vertex $v \in [n]$ we assign $d_v$ half-edges, then we take a uniform random matching  of the half-edges, where any two matched half-edges form an edge of the graph. The resulting random graph is denoted by $\CMD$. After constructing the edges, we assign each edge $e$ an i.i.d.\ edge-length $L_e$ from distribution $L$. We denote the resulting weighted random graph by $\CMDL$.
We assume that the empirical distribution function of the degrees, defined as $F_n(x):=\frac1n \sum_{v\in [n]} \ind_{\{d_v\le x\}}$, satisfies the conditions for a power-law distribution, as given in the following assumption.
\begin{assumption}[Power-law tail behavior]\label{ass:degree-dist} There exist $\tau \in (2,3)$, $\gamma \in (0,1)$, $C > 0$ and $\al>1/2$ such that for all  $x \in [0, n^{\al})$,
	\be \label{eq:Fn} \frac{1}{x^{\tau-1}}\mathrm{e}^{-C(\log x)^{\gamma}} \leq 1 - F_n(x) \leq \frac{1}{x^{\tau-1}}\mathrm{e}^{C(\log x)^{\gamma}}, \ee 
Additionally, we assume that $\min_{v \in [n]} d_v \geq 2$. 
\end{assumption}
Under Assumptions \ref{ass:degree-dist}, \cite{JanLuc09}, there is a giant component of size $n(1-o(1))$, thus two uniformly chosen vertices lie whp in the same connected component.
Let $D_n$  denote a random variable with distribution function $F_n$, the degree of an uniformly chosen vertex in $[n]$. We define $B_n$ as the (size biased version of $D_n$)-1.
\be \label {eq:size-biased} \bP(B_n = k) := \frac{k+1}{\CH_n} \sum_{v\in [n]} \ind_{\{d_v = k+1\}} = \frac{k+1}{\bE[D_n]}\bP(D_n = k+1). \ee
We write $F_{B_n}$ for the distribution function of $B_n$. As shown in \cite{HofKom16}, $F_{B_n}$ also satisfies a similar bound as \eqref{eq:Fn}, namely, for some $C^\star>0$,
\be \label{eq:FB-bounds} \frac{1}{x^{\tau-2}}\mathrm{e}^{-C^\star(\log x)^{\gamma}} \leq 1 - F_{B_n}(x) \leq \frac{1}{x^{\tau-2}}\mathrm{e}^{C^\star(\log x)^{\gamma}}. \ee
To be able to relate models with different values of $n$ to each other, we pose an additional assumption.
\begin{assumption}[Limiting distributions]\label{ass:tv}
	There exist distribution functions $F_D(x), F_{B}(x)$ such that for some $\kappa>0$,
	\[ \max\{ \mathrm{d}_{\sss{\mathrm{TV}}}(F_n, F), \mathrm{d}_{\sss{\mathrm{TV}}}(F_{B_n}, F_{B}) \} \le n^{-\kappa},\]
	where $\mathrm{d}_{\sss{\mathrm{TV}}}(F, G):=\tfrac12 \sum_{x\in \N} |F(x+1)-F(x) - (G(x+1)-G(x)) |$ is the total variation distance between two (discrete) probability measures.
\end{assumption}
We denote the random variables following the distribution $F_D$ and $F_B$ of Assumption \ref{ass:tv} by $D$ and $B$. Clearly Assumption \ref{ass:tv} implies $D_n \toindis D$ and $B_n \toindis B$. Since $F_D$ and $F_B$ are independent of $n$, it is elementary to show that they satisfy \eqref{eq:Fn} and \eqref{eq:FB-bounds} for all $x\in \N$. 

The goal of this paper to study the weighted distances in $\CMDL$, that now we define.
\begin{definition}[Graph- and Weighted distance, Hopcount]\label{def:weighted-dist}
	Let $u$ and $v$ be two vertices in $\CMDL$. Then the \emph{graph distance} $d_G(u,v)$ is the number of edges used by the shortest path between $u$ and $v$. The \emph{weighted distance}  or \emph{$L$-distance} between $u$ and $v$ is defined as
	\be \label{eq:weighted-dist} \dl := \min_{\pi: u \rightarrow v} \sum_{e \in \pi} L_e, \ee
	where the minimum is taken over all paths connecting $u$ to $v$ present in $\CMDL$. We set $\dl = 0$ if $u = v$ and $\dl = \infty$ if $u$ and $v$ are not connected. We define $d_H(u,v)$, the \emph{hopcount}, as the number of edges on the optimal path realising $\dl$. Finally, for two sets of vertices $A, B$, $\mathrm d_L(A,B):=\min_{x\in A, y \in B} \mathrm d_L(x,y)$.
\end{definition}

The following theorem states the main result of this paper:
\begin{theorem}[Weighted distances]
	\label{thm:mainthm}
	Consider the weighted configuration model $\CMDL$ satisfying Assumptions \ref{ass:degree-dist}-\ref{ass:tv} and let $u$ and $v$ be two uniformly chosen vertices from $[n]$. Suppose that the  distribution function $\FL$ of $L $ satisfies  that
	\be\label{eq:weigthdist} \sum_{k = 1}^{\infty}F_L^{\sss{(-1)}}(\mathrm{e}^{-\mathrm{e}^k}) = \infty \ee
	Then, for the weighted distance,
	\be\label{eq:mainthm} \dl \; \Big / \; 2 \an \toinp 1. \ee
	For the hopcount, for all $\ve>0$
	\be \lim_{n\to \infty}\Pv\Big(d_H(u,v) \Big / 2\frac{\log \log n}{|\log (\tau-2)|} \ge 1-\ve\Big)=0, \ee
	and whp, there exist at least one path of length at most $1+\ve$ times the denominator in \eqref{eq:mainthm}, with number of edges at most $(1+\ve)2 \log \log n/|\log (\tau-2)|$.
\end{theorem}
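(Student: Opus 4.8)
The plan is to prove matching bounds $\dl\le(1+\ve)2a_n$ and $\dl\ge(1-\ve)2a_n$ with high probability for each fixed $\ve>0$, where I abbreviate $a_n:=\an$, $c:=1/(\tau-2)>1$ and $K_n:=\fl{\log\log n/|\log(\tau-2)|}$. The organising picture — to be made precise through the coupling of Section~\ref{s:couple} — is that the weighted neighbourhood of a typical vertex of $\CMDL$ looks like an age-dependent branching process with offspring law $B$ and i.i.d.\ edge-lengths $L$: generation $i$ of it holds of order $\e^{c^{i}}$ individuals, so the cheapest edge leaving generation $i$ has length of order $F_L^{(-1)}(\e^{-c^{i}})$ and reaching generation $i$ takes time of order $\sum_{j\le i}F_L^{(-1)}(\e^{-c^{j}})$; condition \eqref{eq:weigthdist} is precisely non-explosiveness of this process, which forces $a_n\to\infty$. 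Throughout I would lean on the \emph{scale change}: for non-increasing $g\ge0$, $\sum_{k\ge1}g(\rho^{k})=\infty$ for one $\rho>1$ iff for all of them ($\iff\int_{1}^{\infty}g(s)\,s^{-1}\,\di s=\infty$), and, with $g(y):=F_L^{(-1)}(\e^{-y})$, $A>0$ fixed, $\mu\in(1,c)$, one has $\sum_{k=1}^{\lfloor K_n\log c/\log\mu\rfloor}g(A\mu^{k})=\tfrac{\log c}{\log\mu}a_n(1+o(1))$, the prefactor tending to $1$ as $\mu\uparrow c$; together with $a_n\to\infty$ (which absorbs all $O(1)$ boundary terms), this base-freedom is what makes the limit insensitive to the $\Op(1)$ fluctuations appearing below.

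For the \emph{upper bound} I would exhibit an explicit path. Fix $\theta=\theta(\ve)\in(0,1)$ so close to $1$ that $\tfrac{\log c}{\log\mu}\le1+\ve$ with $\mu:=c\theta$, and fix $\rho\in(1/2,\al)$. Starting from $u=:w_{0}$ (degree $d_{0}=\Op(1)$, $\ge2$), build a ``climb'' $w_{0},w_{1},\dots$ with $\deg(w_{j})\ge d_{j}:=d_{j-1}^{\mu}$, hence $\log d_{j}=\mu^{j}\log d_{0}$: by \eqref{eq:FB-bounds} a fraction at least $d_{j}^{-(\tau-2)}\e^{-C^{\star}(\log d_{j})^{\gamma}}=d_{j-1}^{-\theta}\e^{-O(\mu^{(j-1)\gamma})}$ of $w_{j-1}$'s half-edges go to vertices of degree $\ge d_{j}$ (using $c(\tau-2)=1$), so $w_{j-1}$ has at least $\exp\!\big((1-\theta)\mu^{j-1}\log d_{0}-O(\mu^{(j-1)\gamma})\big)\to\infty$ (as $\gamma<1$) such half-edges, among which whp one of length $\le F_L^{(-1)}\!\big(\e^{-(1-\ve)(1-\theta)\mu^{j-1}\log d_{0}}\big)$; the finitely many initial steps, where this count need not diverge, I would handle by hand at cost $O(1)=o(a_n)$. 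After $J=\lfloor K_n\tfrac{\log c}{\log\mu}\rfloor+O(1)\le(1+\ve)K_n$ steps $\deg(w_{J})$ exceeds $n^{\rho}$, and by the scale change the total length is $\le\tfrac{\log c}{\log\mu}a_n(1+o(1))\le(1+\ve)a_n(1+o(1))$. Two vertices of degree $\ge n^{\rho}$ are whp joined by an edge (in fact by a diverging number of them, since $\rho>1/2$), so running the climb from $v$ as well and inserting the cheapest common edge — cost $o(a_n)$ — gives a $u$--$v$ path of length $(1+\ve)2a_n(1+o(1))$ with at most $(1+\ve)2K_n$ edges. This is where the degree-dependent percolation of Section~\ref{s:perc} should enter: it is the device that makes rigorous and uniform-in-$j$ the claim that $w_{j-1}$ has the asserted number of ``good'' half-edges with lengths independent of goodness. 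Letting $\ve\downarrow0$ delivers the upper half of \eqref{eq:mainthm} and the ``short path'' assertion.

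For the \emph{lower bound} I would show that whp the weighted ball $\cB_{u}:=\{w:\mathrm d_{L}(u,w)\le(1-\ve)a_n\}$, and likewise $\cB_{v}$, has $n^{o(1)}$ vertices, all of degree $n^{o(1)}$. Given that, $\cB_{u}$ and $\cB_{v}$ carry $n^{o(1)}$ half-edges each, so by uniformity of the matching they are whp vertex-disjoint and joined by no edge (the expected number of edges between them being $n^{o(1)}n^{o(1)}/\he=o(1)$); and $\dl<2(1-\ve)a_n$ would create such an edge $\{x,y\}$ — take $x$ the last vertex on a $u$--$v$ geodesic with $\mathrm d_{L}(u,x)\le(1-\ve)a_n$ and $y$ its successor, for which $\mathrm d_{L}(v,y)=\dl-\mathrm d_{L}(u,y)<(1-\ve)a_n$ — whence $\dl\ge2(1-\ve)a_n$ whp. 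To establish the claim I would couple the weighted exploration from $u$ to a branching process \emph{truncated generation by generation} (at graph-distance $i$ follow only half-edges at vertices of degree $<\e^{\lambda c^{i}}$, for a suitable constant $\lambda=\lambda(\ve)$) and prove, via \eqref{eq:FB-bounds} and the scale change, that: (i) the cheapest path reaching graph-distance $g$ in the truncated process costs at least $(1-o(1))\sum_{i\le g}F_L^{(-1)}(\e^{-c^{i}})$, which — being concave in $g$ — is at least $(1-o(1))(g/K_n)a_n$, hence exceeds $(1-\ve)a_n$ once $g\ge(1-\ve/2)K_n$, so within budget $(1-\ve)a_n$ one reaches only graph-distances $g\le(1-\ve/2)K_n$, and then the explored set has $\exp(O(\lambda c^{g}))\le\exp(O(\lambda(\log n)^{1-\ve/2}))=n^{o(1)}$ vertices and never touches a vertex of degree $n^{\Omega(1)}$; and (ii) as long as $o(\sqrt n)$ vertices have been explored the truncated process agrees with the true weighted neighbourhood of $u$ up to error $o(1)$. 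A Chernoff bound on sums of i.i.d.\ copies of $L$ (using $\Pv(L=0)=0$, a consequence of \eqref{eq:weigthdist}) would cap the number of edges of any competing low-weight path at $O(a_n)$, keeping the union bound behind (i)--(ii) of size $n^{o(1)}$. With the two bounds in hand, $\dl/(2a_n)\toinp1$; the hopcount lower bound follows from $d_H\ge d_G=2K_n(1+o(1))$ (the weightless version of the ball-count), and the existence of a path of weighted length $\le(1+\ve)2a_n$ with at most $(1+\ve)2K_n$ edges is exactly the output of the upper-bound construction.

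I expect the \emph{main obstacle} to be the lower bound, and within it the calibration of the truncation level $\e^{\lambda c^{i}}$: it has to be high enough that no genuinely cheap path of $\CMDL$ is lost by ignoring the higher-degree vertices at level $i$, yet low enough that the explored region stays $o(\sqrt n)$, so that the coupling in (ii) is exact and the final union bound is $n^{o(1)}$; and the non-explosion estimate (i) must moreover survive the $\Op(1)$ randomness in the degrees of the first few generations — which, as above, should be absorbed by $a_n\to\infty$ and by the base-freedom of the scale change. For the erased configuration model $\ECMDL$ I would first show that deleting self-loops and multiple edges changes each degree by a factor $1+o(1)$, whp and uniformly in the vertex, by a second-moment estimate on the number of self-loops and multi-edges incident to a vertex of prescribed degree (using \eqref{eq:Fn}); since $F_L^{(-1)}\big(1/((1+o(1))d)\big)$ and $F_L^{(-1)}(1/d)$ differ only through the harmless scale change, all of the above should carry over and \eqref{eq:mainthm} would hold for the erased model with the same $a_n$.
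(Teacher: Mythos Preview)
Your upper bound matches the paper's strategy (Proposition~\ref{prop:pathlength} and Lemma~\ref{lem:upperbound}): a degree-climbing path from each of $u,v$ to a vertex of degree $n^{\Omega(1)}$, with the degree-dependent percolation of Section~\ref{s:perc} guaranteeing both that the percolated graph still obeys Assumption~\ref{ass:degree-dist} (Lemma~\ref{lem:percdegrees}) and that every retained edge is short. One structural difference: the paper does \emph{not} start the climb at $u$ but first spends $M_n\to\infty$ i.i.d.\ edge-lengths (total cost $o(a_n)$ by Lemma~\ref{lem:upperboundexpl}) to reach a vertex $u_{K_n}$ of degree $\widetilde K_n\to\infty$ (Lemma~\ref{lem:generation-Kn}); only then does it climb. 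This is what makes the failure probabilities along the climb summable uniformly in $n$ (see \eqref{eq:probconnectresult}) --- your ``handle the finitely many initial steps by hand at cost $O(1)$'' is hiding exactly this two-stage decomposition.

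Your lower bound is over-engineered, and the ``main obstacle'' you anticipate does not arise in the paper's argument. The paper (Lemmas~\ref{lem:lowerboundexpl} and~\ref{lem:lowerbound}) never introduces weighted balls or a degree truncation: it works directly with the \emph{graph}-distance balls $B_{\kappa_n(q)}(q)$, where $\kappa_n(q)=K_n+O_{\bP}(1)$ is the last generation at which the BP coupling of Corollary~\ref{corr:couple} is valid. Disjointness of the two balls is automatic from the coupling, so $d_L(u,v)\ge d_L(u,\Delta B_{\kappa_n(u)}(u))+d_L(v,\Delta B_{\kappa_n(v)}(v))$, and each term is bounded below by $\sum_{i<\kappa_n(q)}\min_{j\le Z_{i+1}}L_{i,j}$. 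Lemma~\ref{lem:lowerboundexpl} shows this sum is $\ge(1-\ve)a_n$ whp using the \emph{untruncated} generation sizes $Z_i=\exp\{c^{i}Y_i^{(q)}\}$: the randomness in $Y_i^{(q)}$ is absorbed by the constant $C_n$ of \eqref{eq:Cn}, which is precisely your scale-change observation made quantitative. Your truncation at level $\e^{\lambda c^{i}}$ with $\lambda$ a fixed constant is not only unnecessary but delicate: the maximal forward degree in generation $i$ of the BP is of order $Z_i^{1/(\tau-2)}=\exp\{c^{i+1}Y_i\}$, which for large $i$ exceeds $\e^{\lambda c^{i}}$ on the event $\{cY>\lambda\}$ of positive probability, so the truncated process genuinely loses vertices and a lower bound on \emph{its} cheapest path does not transfer to the true neighbourhood without an extra argument.

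For the erased model your plan of showing degrees change by a factor $1+o(1)$ uniformly fails at the top: two vertices of degree $\ge n^{1/2+\delta}$ share $\Theta(n^{2\delta})$ parallel edges in expectation. The paper's route (Lemma~\ref{lem:uniquepath}) is instead to verify that the specific edges on the constructed climb up to layer $i_{\max}-1$ are whp simple --- this uses that $d_{u_i}d_{u_{i+1}}/\cH_n\to 0$ along the climb --- and then to reconnect the two top vertices by a fresh $O(1)$-hop path in the \emph{un}-percolated graph, whose edge-lengths are simply fresh i.i.d.\ copies of $L$ and hence $o(a_n)$.
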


Convergence in distribution of the hopcount around $2\log \log n/|\log (\tau-2)|$ remains an open question, since the upper bound does not follow from our techniques. Namely, we cannot exclude the possibility of a much longer path with optimal total edge-length.  

The weighted erased configuration model is defined as follows.  After $\CMDL$ is constructed, we remove all self-loops and, if there are multiple-edges between two vertices, one of the edges is chosen uniformly at random independent of the edge weights and the other edges are deleted. The resulting graph is called the weighted erased configuration model, shortly $\ECMDL$. Let us denote the L-distance in this graph by $\dle$ and the hopcount by $d_H^e(u,v)$.

\begin{theorem}[Weighted distances in the erased configuration model]
	\label{thm:erasedthm}
	Consider the erased configuration model $\ECMDL$ satisfying Assumptions \ref{ass:degree-dist}-\ref{ass:tv} and let $u$ and $v$ be uniformly chosen from $[n]$. Suppose that the distribution function $\FL$ of $L$ satisfies 	\eqref{eq:weigthdist}. Then the results of Theorem \ref{thm:mainthm} remain valid for $\dle$ and $d_H^e(u,v)$ as well. 
\end{theorem}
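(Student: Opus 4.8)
The plan is to deduce Theorem~\ref{thm:erasedthm} from Theorem~\ref{thm:mainthm} by comparing $\ECMDL$ and $\CMDL$ on the same probability space, using that erasure only \emph{removes} edges and is, quantitatively, concentrated on the high-degree vertices. For the lower bound this is immediate: $\ECMDL$ arises from $\CMDL$ by deleting self-loops and all but one edge of every parallel bundle while retained edges keep their weights, so every $u$--$v$ path in $\ECMDL$ is a weight-preserving $u$--$v$ path of $\CMDL$; hence $\dle\ge\dl$ pointwise and $\dle/(2\an)\ge 1-\ve$ whp follows from Theorem~\ref{thm:mainthm}. The same monotonicity shows that any statement ``whp no $u$--$v$ path with $\ge N$ edges has weight $\le W$'' passes from $\CMDL$ to $\ECMDL$, so the part of the hopcount statement excluding long optimal paths transfers once the weighted-distance upper bound is in place.

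For the upper bound I would reproduce, inside $\ECMDL$, the path underlying the upper bound of Theorem~\ref{thm:mainthm}. That path concatenates (a) a ``greedy path'' from $u$ and one from $v$, each visiting vertices $w_0,w_1,\dots,w_N$ with $N=O(\log\log n/|\log(\tau-2)|)$ of doubly-exponentially increasing degree, the $i$-th edge having weight about $F_L^{(-1)}(\e^{-(1/(\tau-2))^i})$, until a vertex of degree $\ge n^{\eta}$ is reached, and (b) a short crossing of the core $\mathrm{Core}_n:=\{v:d_v\ge n^{\alpha'}\}$. Choose $\alpha'\in\big(\tfrac12,\min(\alpha,\tfrac1{2(\tau-2)})\big)$ (non-empty since $\alpha>\tfrac12$ and $\tau<3$) and $\eta\in\big(\alpha'(\tau-2),\tfrac12\big)$.

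It then remains to verify two points. First, the greedy paths survive erasure verbatim: every vertex $w$ on such a path has degree below $n^{\eta}<n^{1/2}$, so the probability it is incident to a self-loop or multiple edge is $O(d_w^2/\he)$, which --- since the degrees grow super-exponentially along the path --- is $O(n^{2\eta-1})=o(1)$ even after summing; a union bound over the two greedy paths then shows that whp each retains a single edge with its original weight between consecutive vertices. Second, the core crossing survives: by \eqref{eq:Fn} $\mathrm{Core}_n$ has $n^{\Theta(1)}$ vertices and, since $2\alpha'>1$, any two of them span an expected $\Theta(n^{2\alpha'-1})\to\infty$ edges, so whp $\mathrm{Core}_n$ is a clique in $\CMD$; a hub $v$ loses to erasure only $O\big(d_v^{\tau-1}n^{2-\tau}+d_v^2/\he\big)=o(d_v)$ half-edges, so $\mathrm{Core}_n$ is still a clique in $\ECMD$ (with surviving-edge weights distributed as $L$), and, because $\eta>\alpha'(\tau-2)$, a vertex of degree $\ge n^{\eta}$ has (by \eqref{eq:FB-bounds}) $\gg\log n$ half-edges into $\mathrm{Core}_n$, none of which is erased since it is not a hub. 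Hence $w_N$ and $w'_N$ can be joined through $\mathrm{Core}_n$ by $O(1)$ edges of total weight $O(1)$, which is $o(\an)$ as $\an\to\infty$ under \eqref{eq:weigthdist}. Concatenating yields a $u$--$v$ path in $\ECMDL$ of weight $(1+o(1))\,2\an$ with $(1+o(1))\,2\log\log n/|\log(\tau-2)|$ edges; with the lower bound this gives $\dle/(2\an)\toinp 1$, and the claims on $d_H^e(u,v)$ follow as in Theorem~\ref{thm:mainthm}.

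The main obstacle is the uniformity in the verification above: erasure is heavily concentrated on the hubs, and one must rule out that it disturbs the greedy-step counting estimates or the clique/bounded-diameter structure of $\mathrm{Core}_n$ \emph{simultaneously} over the $O(\log\log n)$ steps and the polynomially many hubs involved. This should follow from the quantitative fact that every hub loses only a vanishing fraction of its half-edges, but turning that into uniform control --- rather than pointwise bounds --- is where the work lies. A seemingly slicker alternative, comparing $\ECMD$ with a configuration model on the erased degree sequence $\boldsymbol{d}^{\,\mathrm{er}}$ (which plausibly still satisfies Assumptions~\ref{ass:degree-dist}--\ref{ass:tv} with adjusted constants), founders on the fact that $\ECMD$ is not itself a configuration model, so I expect the direct comparison to be the more robust route.
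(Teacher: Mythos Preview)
Your overall route matches the paper's: the lower bound transfers by comparison, and the upper bound is obtained by showing that the connecting path from the proof of Theorem~\ref{thm:mainthm} survives erasure except possibly near the hubs, with an ad hoc reconnection through the core. Your monotonicity argument for the lower bound ($\dle\ge\dl$ since erasure only deletes edges and keeps weights) is in fact cleaner than what the paper does; the paper instead argues that the BP-coupled neighbourhoods $B_{\kappa_n(q)}(q)$ are whp trees, hence consist of simple edges, so Lemma~\ref{lem:lowerboundexpl} applies verbatim in $\ECMDL$.

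For the upper bound there are two places where your sketch needs sharpening and where the paper takes a slightly different line. First, the ``greedy'' segment in Lemma~\ref{lem:upperbound} lives in the \emph{percolated} graph, so it is the percolated degrees $d^r_{u_i}$ that are controlled by $y_i(K_n)$, not the original ones; the paper invokes Claim~\ref{cl:orig-deg} to bound the original degrees by $s(y_{i+1})$, and then (Lemma~\ref{lem:uniquepath}) bounds the conditional probability that the edge $(u_i,u_{i+1})$ is part of a multi-edge by $O(d_{u_i}d_{u_{i+1}}/\cH_n)$, stopping one step short at $i_{\max}-1$ precisely because the last edge may join two hubs whose degree product exceeds $n$. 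Your statement that a vertex $w$ on the path has multi-edge probability $O(d_w^2/\cH_n)$ is not correct as written---the multi-edge probability at $w$ depends on its neighbours' degrees and $\E[D_n^2]$ diverges for $\tau\in(2,3)$---but the \emph{pairwise} bound $O(d_{w_i}d_{w_{i+1}}/\cH_n)\le n^{2\eta-1}$ is what you actually need, and your truncation at $n^\eta<n^{1/2}$ makes this immediate. Second, for the core crossing the paper takes a shorter route than your clique argument: erasure always retains one edge from every parallel bundle, and by the model's definition the retained edge carries a fresh $L$-weight independent of everything else; hence the $5$-hop connection between $u_{i_{\max}-1}$ and $v_{i_{\max}-1}$ through vertices of degree $\ge n^{1/2+\delta}$ follows from the same counting as in \eqref{eq:conn-bound}, with weight $\sum_{j=1}^5 L_j=o(a_n)$. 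This sidesteps entirely the need for a uniform estimate on how many half-edges a hub loses to erasure, which is exactly the ``uniform control'' issue you flag as the main obstacle in your approach.
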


\begin{remark}[I.i.d. degrees]\normalfont
	Using concentration techniques it can be shown that Assumptions \ref{ass:degree-dist} and \ref{ass:tv} are satisfied whp when the degrees are i.i.d.\ coming from a background distribution function $F(x)$ satisfying \eqref{eq:Fn} for all $x\in \N$, see \cite{bhamidi2017}. 
	\end{remark}
	
\begin{remark}\normalfont[Explosive vs non-explosive edge-weight distributions]
Given a particular distribution $L$ for the edge weights, convergence vs divergence of the sum in \eqref{eq:weigthdist} is elementary to check, and it depends on the behaviour of $F_L$ around $0$. The steeper $F_L$ at the origin, the smaller the sum in \eqref{eq:weigthdist}: the sum converges  e.g.\ if $F_L$ increases as a polynomial in around $0$,  which is the case for exponential, uniform, Gamma  distributions. Distributions with support separated away from $0$ always give a divergent sum, and distributions with inverse $F_L^{(-1)}(z)=O(1/\log\log (1/z))$ also diverge. This corresponds to the family of distributions $F_L(t)=\exp\{-C\exp\{-c/t^\beta\}\}$, that give explosion for $\beta<1$ but non-explosion for $\beta\ge1$.
\end{remark}

Note that \eqref{eq:weigthdist} does not require that $F_L$ is continuous. 	
By setting the edge weights to be deterministic and equal to 1 in Theorem \ref{thm:mainthm}, we obtain the following corollary. Stronger results about the graph distance were already obtained in \cite{HHZ07, HofKom16}.

\begin{corollary}[Graph distances]
	Consider the configuration model $\CMD$  satisfying Assumptions \ref{ass:degree-dist}-\ref{ass:tv} and let $u$ and $v$ be uniformly chosen vertices from $[n]$. Then
	\be d_G(u,v) \Big/ 2\left\lfloor\frac{\log \log n}{|\log(\tau-2)|}\right\rfloor \toinp 1. \ee
\end{corollary}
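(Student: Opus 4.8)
The plan is to obtain the corollary as an immediate specialisation of Theorem~\ref{thm:mainthm} to the deterministic edge-weight law $L \equiv 1$. First I would note that when every edge-length equals $1$, we have $\sum_{e\in\pi}L_e = |\pi|$ for every path $\pi$, so the weighted distance $\dl$ defined in \eqref{eq:weighted-dist} coincides with the graph distance $d_G(u,v)$; in other words $\CMDL$ with $L\equiv 1$ is just the unweighted $\CMD$. Assumptions~\ref{ass:degree-dist}--\ref{ass:tv} concern only the degree sequence and are therefore unaffected by this choice of $L$.

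Second, I would verify that the hypothesis \eqref{eq:weigthdist} of Theorem~\ref{thm:mainthm} holds. For $L\equiv 1$ the distribution function is $F_L(y)=\ind_{\{y\ge 1\}}$, whence $F_L^{(-1)}(z)=\inf\{y:\ind_{\{y\ge 1\}}\ge z\}=1$ for every $z\in(0,1]$. In particular $F_L^{(-1)}(\e^{-\e^k})=1$ for all $k\ge 1$, so $\sum_{k\ge 1}F_L^{(-1)}(\e^{-\e^k})=\infty$. (Equivalently, the age-dependent branching process attached to these edge-weights has deterministic unit lifetimes, hence is non-explosive.) Thus Theorem~\ref{thm:mainthm} applies.

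Finally I would simplify the normalisation. By the same computation $F_L^{(-1)}(\e^{-(1/(\tau-2))^i})=1$ for every $i\ge 1$, so the denominator $2\an$ appearing in \eqref{eq:mainthm} reduces to $2\lfloor\log\log n/|\log(\tau-2)|\rfloor$, the number of summands. Substituting $\dl=d_G(u,v)$ into \eqref{eq:mainthm} then gives exactly $d_G(u,v)\big/2\lfloor\log\log n/|\log(\tau-2)|\rfloor\toinp 1$, which is the claim. There is no real obstacle here; the only point worth flagging is that $F_L$ is discontinuous, but Theorem~\ref{thm:mainthm} and the remark following it explicitly allow non-continuous edge-weight laws, so nothing extra is needed. (One could instead invoke the sharper graph-distance theorems of \cite{HHZ07, HofKom16}, but the route above uses only what has already been established in this paper.)
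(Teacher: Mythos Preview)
Your proposal is correct and is exactly the approach the paper takes: the paper states that the corollary follows ``by setting the edge weights to be deterministic and equal to $1$ in Theorem~\ref{thm:mainthm}'' and gives no further details. You have simply spelled out the verification of \eqref{eq:weigthdist} and the simplification of the normalising sum, which the paper leaves implicit.
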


A counterpart of Theorem \ref{thm:mainthm} is \cite[Theorem]{BarHofKomdist}, which we cite here for comparison.
\begin{theorem}[Weighted distances with explosive edge-weights \cite{BarHofKomdist}]\label{thm:explosive}
Consider the weighted configuration model $\CMDL$ satisfying Assumptions \ref{ass:degree-dist}-\ref{ass:tv} and let $u$ and $v$ be two uniformly chosen vertices from $[n]$. Suppose that the  distribution function $\FL$ of $L $ satisfies  that the sum in 
\eqref{eq:weigthdist} converges. Then	
\be\label{eq:mainthm2} \dl  \toindis Y^{\sss{(1)}}+Y^{\sss{(2)}}, \ee
	where $Y^{(1)}, Y^{(2)}$ are i.i.d. copies of some a.s.\ finite random variable. 

\end{theorem}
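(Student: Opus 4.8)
\emph{Branching-process coupling.} The plan follows the standard route for first-passage percolation on locally tree-like graphs: approximate the weighted explorations from $u$ and from $v$ by two independent age-dependent branching processes, identify the limit as the (a.s.\ finite) explosion time of such a process, and prove matching bounds $\dl=Y^{\sss{(1)}}+Y^{\sss{(2)}}+\op(1)$. Run a first-passage exploration from $u$: a vertex is born at the weighted distance at which it is first reached, and then exposes its remaining half-edges, whose partners are born after independent $L$-distributed delays. Up to a fixed generation $k$ the explored set is polylogarithmic in $n$ whp, so no two exposed half-edges have yet been paired and the degrees freshly met are within total variation $o(1)$ of $D$ for the root and $B$ for later vertices (Assumptions~\ref{ass:degree-dist}--\ref{ass:tv}). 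Since $d_G(u,v)\to\infty$ the radius-$k$ balls around $u$ and $v$ are disjoint whp, so the pair of weighted balls $\big(B_k(u),B_k(v)\big)$ is, whp, isomorphic with weights to two independent copies $\cB^{\sss{(1)}},\cB^{\sss{(2)}}$ of the age-dependent branching process with root offspring $D$, non-root offspring $B$, and i.i.d.\ $L$ edge-lengths, truncated at generation $k$. Because $\min_v d_v\ge 2$ we have $B\ge1$ a.s., so no individual is childless; write $V^{\sss{(i)}}_k$ for the minimal birth time in generation $k$ of $\cB^{\sss{(i)}}$ and $Y^{\sss{(i)}}:=\lim_k V^{\sss{(i)}}_k\in(0,\infty]$ for its explosion time.

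\emph{Finiteness of the limit.} Since $F_B$ has a power-law tail of index $\tau-2\in(0,1)$, the offspring mean is infinite, and (as in the graph-distance analysis of \cite{HHZ07,HofKom16}) the generation sizes of $\cB^{\sss{(i)}}$ grow doubly exponentially: for some a.s.-positive $c$ and all large $k$, generation $k$ has at least $\exp\big(c(\tau-2)^{-k}\big)$ individuals. The cheapest way to enter generation $k+1$ is through the minimum of about that many i.i.d.\ copies of $L$, so $V^{\sss{(i)}}_{k+1}-V^{\sss{(i)}}_k$ is of order $F_L^{\sss{(-1)}}\big(\mathrm e^{-c(\tau-2)^{-k}}\big)$. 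Writing $(\tau-2)^{-k}=\mathrm e^{k|\log(\tau-2)|}$ and using monotonicity of $F_L^{\sss{(-1)}}$, a change of variables shows that $\sum_k F_L^{\sss{(-1)}}\big(\mathrm e^{-c(\tau-2)^{-k}}\big)$ and the series in \eqref{eq:weigthdist} converge or diverge together. Hence, under the hypothesis, the telescoping sum $\sum_k\big(V^{\sss{(i)}}_{k+1}-V^{\sss{(i)}}_k\big)$ converges a.s., i.e.\ $\cB^{\sss{(i)}}$ explodes and $Y^{\sss{(i)}}<\infty$ a.s.; being functions of disjoint independent portions of the graph, $Y^{\sss{(1)}}$ and $Y^{\sss{(2)}}$ are i.i.d.\ copies of one a.s.-finite law.

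\emph{Lower bound.} Fix $\ve>0$ and choose $k$ with $\Pv(Y-V_k>\ve)<\ve$; take $n$ large enough that the coupling above is exact up to generation $k$. Any path from $u$ to $v$ has an initial segment lying in $B_k(u)$ that reaches the distance-$k$ sphere of $u$, hence of weight at least $d_L\big(u,\partial B_k(u)\big)=V^{\sss{(1)}}_k$, and a final segment lying in $B_k(v)$ of weight at least $V^{\sss{(2)}}_k$; as $B_k(u)\cap B_k(v)=\emptyset$, these segments are edge-disjoint. Therefore $\dl\ge V^{\sss{(1)}}_k+V^{\sss{(2)}}_k\ge Y^{\sss{(1)}}+Y^{\sss{(2)}}-2\ve$ with probability at least $1-2\ve-o(1)$.

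\emph{Upper bound, and the main obstacle.} This direction is the crux and cannot be read off the tree approximation, since by weighted time $Y^{\sss{(i)}}+\ve$ the branching process already has infinitely many individuals, so the coupling must be stopped much earlier. Given $\ve>0$, first pick a constant $D_0$ so large that the tail of the series \eqref{eq:weigthdist} beyond index $\approx\log\log D_0$ is below $\ve/4$, then a generation $K_1=K_1(\ve,D_0)$ so large that with probability $\ge1-\ve$ the process $\cB^{\sss{(1)}}$ already contains, before weighted time $Y^{\sss{(1)}}+\ve/4$, an individual of degree $\ge D_0$ (and similarly for $\cB^{\sss{(2)}}$); couple to these processes only up to generation $K_1$. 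From the degree-$D_0$ vertex one runs in $\CMDL$ a greedy ``climb to higher degree'': among a small power of the $d$ cheapest-weight half-edges of a degree-$d$ vertex, one finds whp (by \eqref{eq:FB-bounds}) a neighbour of degree $d^{1/(\tau-2)-o(1)}$, reached at weight of order $F_L^{\sss{(-1)}}\big(d^{-\delta}\big)$; iterating $O(\log\log n)$ times reaches the core $\mathrm{Core}_n:=\{v:d_v\ge(\log n)^{K}\}$ at extra total weight that is again a tail of a series equivalent to \eqref{eq:weigthdist}, hence below $\ve/4$ whp. Finally an ``ultra-small weighted core'' estimate --- there are $\Theta\big(n(\log n)^{-K(\tau-1)}\big)$ core vertices, those of degree $\ge n^{1/2+\eta}$ form an almost-clique with many parallel connections, and the minimum of $m$ i.i.d.\ weights is $F_L^{\sss{(-1)}}(1/m)=o(1)$ --- gives, by a further $O(\log\log n)$-layer peeling, that any two core vertices are joined at weight $\op(1)$. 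Concatenating the $u$-climb, a core crossing, and the $v$-climb yields $\dl\le\big(Y^{\sss{(1)}}+\tfrac{\ve}{2}\big)+\op(1)+\big(Y^{\sss{(2)}}+\tfrac{\ve}{2}\big)$, which together with the lower bound gives $\dl\toindis Y^{\sss{(1)}}+Y^{\sss{(2)}}$ with $Y^{\sss{(1)}},Y^{\sss{(2)}}$ i.i.d.\ and a.s.\ finite. The main obstacle is precisely this last step: leaving the tree-like regime and controlling quantitatively both the cost of the greedy climb into the core (where explosiveness is invoked a second time to make that cost vanish) and the weighted diameter of the core, each resting on the power-law lower bounds \eqref{eq:Fn}/\eqref{eq:FB-bounds}.
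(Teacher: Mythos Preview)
The paper does not prove Theorem~\ref{thm:explosive}; it is quoted from \cite{BarHofKomdist} purely for context, introduced with the words ``A counterpart of Theorem~\ref{thm:mainthm} is \cite[Theorem]{BarHofKomdist}, which we cite here for comparison.'' There is therefore no proof in this paper to compare your proposal against.

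That said, your outline is a reasonable high-level sketch of the strategy one expects in \cite{BarHofKomdist}: couple the two neighbourhoods to independent age-dependent branching processes, identify $Y^{\sss{(i)}}$ with their explosion times, obtain the lower bound from disjointness of the two balls, and for the upper bound climb greedily through increasing degrees into the high-degree core and then cross the core at negligible cost. One point you gloss over deserves a flag. In your ``Finiteness of the limit'' paragraph you argue that $V^{\sss{(i)}}_{k+1}-V^{\sss{(i)}}_k$ is ``of order'' the minimum of the edge-weights leaving generation $k$, and conclude explosion from summability of these minima. But the minima in successive generations need not align along a single ray from the root, so summability of the generation-wise minima is a priori only \emph{necessary} for explosion (this is exactly how the present paper motivates the criterion \eqref{eq:weigthdist} in Section~2.2). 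Sufficiency---that an infinite ray of finite total weight actually exists---requires a nontrivial construction, which is the content of the Amini--Devroye--Griffiths--Olver result \cite{AmiDevGriOlv13} the paper cites. Your sketch should invoke that result (or reproduce its greedy algorithm) rather than the naive minimum argument.
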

Theorems \ref{thm:mainthm} and \ref{thm:explosive} together describe typical distances in the configuration model with power law degrees, with exponent $\tau\in(2,3)$ for all edge-weight distributions $L$. Next we discuss some related literature and pose some open problems.

\subsection{Discussion and open problems}
\emph{Relation to age-dependent branching processes.}
The configuration model has a tree-like local structure. Since most cycles are long, the local neighborhood of a uniformly chosen vertex  exploration around a vertex can be coupled to a branching process (BP). When the edge-weights are incorporated in the model and in the coupling, this BP becomes  age-dependent.  In an age-dependent BP, individuals have an i.i.d.\ lifetime and give birth to their i.i.d. number of  offspring upon death. Let us denote such a BP with offspring distribution $X$ and life-time distribution $\sigma$ by $\mathrm{BP}(X, \sigma)$. Let us write $\mathrm{BP(D, X, \sigma)}$ for $D$ i.i.d. copies of $\mathrm{BP}(X, \sigma)$. Then, the local neighborhood of a vertex in $\CMDL$ can be approximated by $\mathrm{BP}(D,F_{B}, L)$. Explosion of a BP means that the BP produces infinitely many individuals in finite time, with positive probability. In 2013 Amini et al \cite{AmiDevGriOlv13} gave a necessary and sufficient condition for for the explosion of $\mathrm{BP}(X,L)$, for offspring distributions $X$ that satisfy $\Pv(X\ge x) \ge x^{-1-\ve}$ for some $\ve>0$. In an unpublished note \cite{Kom16}, under the stronger assumption that $X$ satisfies $ x^{-1-\ve} \le \Pv(X\ge x) \le x^{-\ve}$ for some $\ve>0$,  the second author simplified this criterion to the sum in \eqref{eq:weigthdist} being finite. The criterion \eqref{eq:weigthdist} comes from the following observation: the $L$-length of any path in a BP leading to infinity can be lower bounded by the sum of the minimum edge-lengths in each generation. 
In generation $k$, the number of individuals is  double exponential in $k$. The minimum of this many i.i.d.\ random variables of distribution $L$
is approximately the $k$th term in the sum in \eqref{eq:weigthdist}. If this sum is infinite, the BP cannot explode, thus the summability of the minimums in each generation is necessary for the BP to explode.  In \cite{AmiDevGriOlv13}, the authors showed that this notion of minimum-summability is sufficient as well by constructing an algorithm that finds an infinite path with finite total length. 

To show distributional  convergence of weighted distances in $\CMDL$, as in Theorem \ref{thm:explosive}, when the underlying BP explodes was the content of \cite{BarHofKomdist}. 
It remained open to characterise the growth of weighted distances when explosion does not happen. It follows from  \cite{AmiDevGriOlv13} that in the nonexplosive case, for offspring distribution $X$ satisfying \eqref{eq:FB-bounds}, the time to reach the first individual in generation $\ell$ grows as 
\[ \sum_{k=1}^\ell F_L^{{-1}} ( \exp\{ - (\tau-2)^{-k}\}).\] This, combined with the fact that the graph distance of a typical vertex to a maximal degree vertex is $\log\log n/|\log (\tau-2)|$, gives a strong intuitive explanation for the formula for $\dl$ in Theorem \ref{thm:mainthm}. 
Unfortunately, the BP approximation for $\CMD$ fails much earlier than reaching the maximal degree vertex, and the BP techniques do not reveal enough information on the structure of the optimal path leading generation $k$ of the BP, in particular, they do not provide good enough lower bounds on the degrees along the path. These are the reasons why we need to use a different technique, degree-dependent percolation, to show the upper bound on $\dl$.
Unfortunately, this technique is not fine enough to show distributional convergence of the fluctuations of $\dl$ around its typical value. 
 
 \begin{problem}[Tightness, distributional covergence]
	Consider $\CMDL$ satisfying Assumptions \ref{ass:degree-dist}-\ref{ass:tv} and let $u$ and $v$ be uniformly chosen vertices from $[n]$. Suppose the distribution function $\FL$ satisfies \eqref{eq:weigthdist}. Determine the conditions under which
		\be \dl \; - \; 2 \an, \qquad   d_H(u,v) - 2 \frac{\log\log n}{|\log(\tau-2)|} \ee  
	are tight sequences of random variables. Do these sequences converge in distribution?
	\end{problem}

Infinite mean degrees, i.e., when $\tau \in (1,2)$, is investigated in \cite{BhaHofHoo10,EskHofHooZna05}, where the authors show that the graph distance is whp $2$ or $3$, the weighted distance converges to the sum of two random variables.
Finite variance degrees, $\tau > 3$, is studied in \cite{BHH10,bhamidi2017,HofHooVan05a,EskHofHoo08}. In this case typical graph distances are of order $\log n$, weighted distances scale as a constant times $\log n$ with converging fluctuations around this value, while the hopcount, centered around another constant times $\log n$, satisfies a central limit theorem.

It still remains open to characterise weighted distances for the boundary exponents, i.e., when $\tau\in \{2, 3\}$. For the $\tau=3$ case, even the explosion of the underlying age dependent BP is an open question. For $\tau=2$, local neighborhoods grow faster than double-exponential and the precise growth depends sensitively on the slowly varying function involved, thus the techniques used here do not apply directly.  
  \begin{problem}[$\tau=2$ or $3$]
	Characterise weighted distances for the case when the degree distribution follows a power law (with a  slowly varying function correction term) when $\tau=2$ and when $\tau=3$. 
	\end{problem}
We further expect that similar results hold for a large class of power-law graph models, specially in the $\tau\in(2,3)$ regime, including inhomogeneous random graphs (e.g. the Chung-Lu or Norros-Reitu models), spatial models such as the geometric inhomogeneous random graphs and scale-free-percolation.

\subsection{Overview of the proof}\label{s:overview}
Next we give an overview of the  proof of Theorem \ref{thm:mainthm}. The proof consists of two parts, a lower and an upper bound that use slightly different techniques.  
\subsubsection{Lower bound} 
Let us denote the graph distance ball of radius $k$ around a vertex $q$ in $\CMD$ by $B^G_k(u)$, and the set of vertices precisely at graph distance $k$ away from $q$ by $\Delta B^G_k(u)$. 
For the proof of the lower bound we show that $B^G_k(u), B^G_k(v)$ can be coupled to two independent branching processes (BPs), with the first generation having distribution function $F_D$ and all further generations having distribution function $F_B$ from Assumption \ref{ass:tv}. We show that the coupling can be maintained until two random indices $\kappa_n(u), \kappa_n(v)$ such that polynomially many vertices in $n$ are found around both vertices $u,v$, and that $B^G_{\kappa_n(u)}(u), B^G_{\kappa_n(v)}(v)$ are whp disjoint. 
Since any path connecting $u,v$ must intersect the boundaries of these sets, we obtain the lower bound
\be\ba\label{eq:lower-heuristic}d_L(u,v) &\ge d_L(u, \Delta B^G_{\kappa_n(u)}(u)) + d_L(v, \Delta B^G_{\kappa_n(v)}(v))\\
&\ge \sum_{q=u,v}\sum_{i=0}^{\kappa_n(q)-1} \min_{x\in \Delta B^G_i(q), y \in \Delta B^G_{i+1}(q) }\{L_{(x,y)}\} \ea\ee
where we obtained the second line by lower bounding $d_L(q, \Delta B^G_{\kappa_n(q)}(q))$ by the sum of the minimal edge lengths connecting $\Delta B^G_i(q)$ to $\Delta B^G_{i+1}(q)$ over $i$. We show that this sum of minima is larger than $(1-\ve)$ times the denominator of the lhs of \eqref{eq:mainthm} whp. 

\subsubsection{Upper bound} \label{s:proof-overview-upper}
The upper bound also couples the neighborhoods $B_k(u), B_k(v)$ to two disjoint BPs, but we exploit the coupling only until we reach vertices $u_{K_n}, v_{K_n}$ of degree at least $\widetilde K_n$, for some carefully chosen $\wit K_n$ that tends to infinity with $n$, but $d_L(u, u_{K_n})$ and $d_L(v, v_{K_n})$ are still of negligible length compared to the denominator of the lhs of \eqref{eq:mainthm}. Then we connect the vertices $u_{K_n}$ and $v_{K_n}$ using degree-dependent percolation that we describe now. 

The idea for degree-dependent percolation originates from \cite{BarHofKom16}, and it is an extension of a construction by Janson \cite{Jan09}. In the percolated graph, we keep edges independently of each other, with probabilities that depend on the degrees of the end vertices of the edge. We use the i.i.d.\ edge lengths to realise the percolation, i.e., an edge $e=(x,y)$ is kept if and only if its edge length satisfies
\be\label{eq:threshold} L_{x,y} \le \xi(d_x, d_y),\ee
for some appropriately chosen threshold function $\xi(\cdot,\cdot)$. We use a result from \cite{BarHofKom16, Jan09},
that states that the percolated graph can be looked at as a subgraph $G^r$ of a configuration model with a new degree sequence $\mathbf{d^r}$.  We choose $\xi$ in such a way that the new degree sequence still satisfies the power-law condition in \eqref{ass:degree-dist}, for the same $\tau$ but possibly different $C,\gamma$. Note that $G^r$ is a subgraph of the original $\CMDL$, and as a result any path present in $G^r$ was necessarily also present in $\CMDL$. We show that $u_{K_n}, v_{K_n}$ has percolated degree at least $K_n$.

Then, we construct two paths, emanating from $u_{K_n}$ and $v_{K_n}$, and reaching vertices $\wit u, \wit v$ of percolated degree at least $n^{(\tau-2)/(\tau-1)}$, respectively, in this percolated graph. We control the (growing) degrees of vertices along these paths and as a result \eqref{eq:threshold} gives an upper bound on the edge-lengths along these paths.  
More precisely, analogous to \cite{BarHofKom16}, we define a sequence $y_i(K_n)$ with $y_0 = K_n$ and layers in the graphs $\Gamma_i := \{v \in [n]: d_v \geq y_i(K_n)\}$, for $0\leq i \leq i_{\max}$ with $i_{\max}$ the number of layers. We show that a vertex in $\Gamma_i$ is connected to a vertex in $\Gamma_{i+1}$ whp, moreover the total error probability over all the layers tends to zero as $K_n\to \infty$. Thus whp there exist paths from $u_{K_n}, v_{K_n}$ where the $i$th vertex along the path has degree at least $y_i(K_n)$.  Finally, we connect the vertices $\wit u$, $\wit v$ in $G^r$ via a path of length at most four using vertices with degree at least $n^{1/2}$. The length of the constructed path is at most
\be\label{eq:upper-heuristic} \dl \leq d_L(u, u_{K_n}) + d_L(v, v_{K_n}) + 2\sum_{i = 0}^{i_{\max}} \xi(y_{i}(K_n),y_{i+1}(K_n)) + 3 \xi(n^{\al(\tau-2)},n^{1/2}). \ee
The first two terms on the rhs, coming from the branching processes, are negligible due to the choice of $K_n$, and the last term also since it tends to zero with $n$. With the proper choice of $\xi(\cdot, \cdot)$, the middle term becomes at most $(1+\ve/2)$ times the denominator of \eqref{eq:mainthm}.
\section{Exploration around two vertices}\label{s:couple}
The goal of this section is couple the neighborhoods $B_k(u), B_k(v)$ of the two uniformly chosen vertices $u,v$ to two independent BPs. We first show that, for $q\in\{u,v\}$ the coupling can be maintained until $k=\kappa_n(q) = \log\log n / |\log(\tau-2)|$+ a tight random variable. Then, using the growth of the BPs, we make \eqref{eq:lower-heuristic} quantitative by giving a whp lower bound on the minimum of edge-lengths connecting consecutive generations in the BPs.

As a preparation for the upper bound, as in \eqref{eq:upper-heuristic}, we determine $M_n$, the number of generations needed to reach a vertex with degree at least $K_n$, that we denote by $q_{k_n}$, for $q\in\{u,v\}$. Finally, we give an upper bound on $d_L(q, q_{K_n})$ for $q\in\{u,v\}$. 

\subsection{Coupling of the exploration to a branching process}
First we explain the coupling of the neighborhoods of the vertices $u$ and $v$ to branching processes. The coupling uses an exploration, where we reveal the pairs of half-edges and thus the neighbors of vertices together with their degrees one-by-one, in a breadth-first-search manner.  By $\CU_t, \CV_t$ we denote the subgraphs consisting of vertices at graph distance of at most $t$ from $u$ and $v$, respectively. The forward degree of a vertex $v$ in the exploration denotes then the number of new (not previously discovered) neighbors of a vertex upon exploration.
We slightly adjust \cite[Lemma 2.2]{HofKom16}) to our setting, since Assumptions \ref{ass:degree-dist}-\ref{ass:tv} are a special case of the assumptions of \cite[Lemma 2.2]{HofKom16}).  An alternative formulation and proof can be found in \cite[Proposition 4.7]{BHH10}. 
\begin{lemma}[Coupling error of the exploration process, \cite{HofKom16}]
	\label{lem:coupling-err}
	Consider  $\CMD$ satisfying Assumptions \ref{ass:degree-dist}-\ref{ass:tv}. Then, in the exploration process started from two uniformly chosen vertices $u$ and $v$, the forward degrees $(\Xk)_{k\leq s_{n}}$ of the first $s_n$ newly discovered vertices can be coupled to an i.i.d.\ sequence $B_k$ from distribution $B$ as in Assumption \ref{ass:tv}. So, there is a coupling  $(X_k^{\sss{(n)}}, B_k)_{k\le s_n}$ with the following error bound
	\begin{align} \nonumber
	 \Pv( \exists k \le s_n,   \Xk \neq B_k) \le & \ Cs_n^{(2\tau-2-2\ve)/(\tau-\ve)}n^{(2-\tau+\ve)/(\tau-\ve)} \\&+ C s_n^2 n^{\frac{(1+\ve)}{\tau-1}-1} + s_n n^{-\kappa}. \label{eq:couple-error-tv}
	\end{align}
\end{lemma}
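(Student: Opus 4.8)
The plan is to establish the coupling by revealing the configuration model in a breadth-first manner and comparing the forward degrees encountered to an i.i.d.\ sequence with law $B$. First I would recall the structure of the exploration: we process half-edges one at a time in BFS order, and when we match a half-edge to a fresh half-edge belonging to an as-yet-undiscovered vertex $w$, the forward degree of $w$ is $d_w-1$, i.e.\ a size-biased pick of $D_n$ minus one. Conditionally on the history after $k-1$ half-edges have been matched, the next fresh vertex is chosen with probability proportional to its remaining half-edges among the roughly $\CH_n-O(k)$ unmatched half-edges, so its forward degree $\Xk$ has the law of $B_n$ but tilted by the depletion of half-edges and by the possibility of forming a self-loop or a multiple edge (a ``collision''). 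The coupling to $B_k\sim B$ then has two sources of error: (i) the discrepancy between the tilted law and $F_{B_n}$, controlled by the number of half-edges already used, which is at most (sum of degrees discovered), a quantity of order $s_n^{1/(\tau-1)+\ve}$ whp by the power-law tail in \eqref{eq:FB-bounds}; and (ii) the discrepancy between $F_{B_n}$ and $F_B$, which is at most $n^{-\kappa}$ per step by Assumption \ref{ass:tv}, contributing $s_n n^{-\kappa}$ after a union bound over the $s_n$ steps.

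Next I would make the collision/depletion term quantitative. A collision at step $k$ happens when the half-edge being processed is matched to a half-edge incident to an already-discovered vertex; the conditional probability of this, given that the total degree discovered so far is $S_{k-1}$, is at most $S_{k-1}/(\CH_n - 2k)$. On the good event that $S_{s_n} \le s_n^{(1+\ve)/(\tau-1)}$ (a whp bound following from summing $s_n$ i.i.d.\ copies of $B$ with tail exponent $\tau-2$, so that the maximum is $O(s_n^{1/(\tau-2)\wedge \text{truncation}})$ and the sum concentrates — here one uses the truncation at $n^\al$ from Assumption \ref{ass:degree-dist} and a first-moment/Markov argument), each collision probability is at most $C s_n^{(1+\ve)/(\tau-1)} n^{-1}$, and summing over $k\le s_n$ gives the term $C s_n^2 n^{(1+\ve)/(\tau-1)-1}$. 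The remaining term $C s_n^{(2\tau-2-2\ve)/(\tau-\ve)} n^{(2-\tau+\ve)/(\tau-\ve)}$ arises from the tail estimate controlling how large a single forward degree can be — i.e.\ the probability that some $\Xk$ exceeds $n^{(1-\ve')/(\tau-1)}$, at which point the depletion bias is no longer negligible — again via \eqref{eq:FB-bounds} and a union bound over the $s_n$ steps, optimizing the free exponent $\ve'$ in terms of $\ve$.

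Concretely, the steps in order are: (1) set up the BFS exploration and identify, for each newly discovered vertex, the exact conditional law of its forward degree given the exploration history; (2) on the event that no ``large'' forward degree (exceeding a threshold $t_n$) has appeared and no collision has occurred, couple the tilted conditional law to $F_{B_n}$ via maximal coupling, bounding the total variation gap by the depletion ratio $S_{k-1}/(\CH_n-2k)$ plus the large-value tail beyond $t_n$; (3) further couple $F_{B_n}$ to $F_B$ paying $n^{-\kappa}$ per step; (4) bound $S_{s_n}$ whp using the power-law tails and union-bound the failure probabilities over the $\le s_n$ steps; (5) optimize the threshold $t_n$ to balance the two resulting error terms, landing on the exponents in \eqref{eq:couple-error-tv}. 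The main obstacle I anticipate is step (2)–(4): getting a whp bound on the partial sum $S_{s_n}$ of forward degrees that is sharp enough — since the $B_k$ have infinite mean, one must truncate at the $n^\al$ scale and argue the sum is dominated by its largest few terms, each of size at most $n^{\al}$, and feed this back self-consistently into the collision estimate. Since this lemma is an adaptation of \cite[Lemma 2.2]{HofKom16} (with \cite[Proposition 4.7]{BHH10} as an alternative reference), I would in practice cite those sources for the detailed bookkeeping and only verify that our Assumptions \ref{ass:degree-dist}–\ref{ass:tv} supply the hypotheses needed there, in particular that \eqref{eq:Fn} holds up to $x=n^\al$ with $\al>1/2$ and that the total-variation rate $n^{-\kappa}$ in Assumption \ref{ass:tv} accounts for the last summand.
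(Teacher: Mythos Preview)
The paper does not give its own proof of this lemma: immediately after stating Corollary~\ref{corr:couple} it writes ``The proofs of Lemma~\ref{lem:coupling-err} and Corollary~\ref{corr:couple} can be found in \cite[Section~2]{HofKom16}.'' Your proposal is therefore more detailed than what the paper actually does, but it is consistent with it---you correctly identify the lemma as a citation of \cite[Lemma~2.2]{HofKom16} (with \cite[Proposition~4.7]{BHH10} as an alternative), you verify that Assumptions~\ref{ass:degree-dist}--\ref{ass:tv} furnish the needed hypotheses (in particular the tail bounds up to $n^{\al}$ and the total-variation rate $n^{-\kappa}$ that produces the $s_n n^{-\kappa}$ term), and your sketch of the three error contributions (collision/depletion, large-degree tail, $F_{B_n}$ versus $F_B$) matches the structure of the bound in \eqref{eq:couple-error-tv}.
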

An immediate corollary is the following:
  \begin{corollary}[Whp coupling of the exploration to BPs, \cite{HofKom16}] \label{corr:couple}
	In the configuration model satisfying Assumption \ref{ass:degree-dist} and \ref{ass:tv}, let $t$ be such that
	\be |\CU_t\cup \CV_t| \leq 2\min\{n^{(1-(1+\ve)/(\tau-1)-\delta)/2},n^{-(\tau-2-2\ve)/2(\tau-1-\ve)}, n^{(2-\tau+\ve)/(\tau-\ve)}, n^{\kappa-\delta} \}=:2n^{\theta(\delta)}\ee 
	for some $\delta>0$.  Then $(\CU_t, \CV_t)$ can be whp coupled to two i.i.d.\ BPs  with generation sizes $(Z_{k}^{\sss{(u)}}, Z_{k}^{\sss{(v)}})_{k>0}$  with distribution $F_B$ for the offspring in the second and further generations, and with distribution $F$ for the offspring in the first generation.
\end{corollary}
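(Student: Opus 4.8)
The plan is to deduce this directly from Lemma~\ref{lem:coupling-err}: the corollary is just that lemma repackaged for later use. First I would set $s_n := 2n^{\theta(\delta)}$ and run the breadth-first exploration simultaneously from $u$ and $v$, revealing matched pairs of half-edges one at a time; as long as $|\CU_t\cup\CV_t|\le 2n^{\theta(\delta)}$, at most $s_n$ vertices have been discovered, so Lemma~\ref{lem:coupling-err} supplies a coupling of the forward degrees $(\Xk)_{k\le s_n}$ with an i.i.d.\ sequence $(B_k)_{k\le s_n}$, $B_k\sim B$, for which the event $\{\exists k\le s_n:\Xk\neq B_k\}$ has probability at most the right-hand side of \eqref{eq:couple-error-tv}.

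Next I would substitute $s_n=2n^{\theta(\delta)}$ into \eqref{eq:couple-error-tv} and check, term by term, that the bound is $o(1)$. This is immediate from the definition of $\theta(\delta)$ as the minimum of the four listed exponents: once $s_n=2n^{\theta(\delta)}$ is inserted, each of the three summands in \eqref{eq:couple-error-tv} is a product $s_n^{a}n^{-b}$ (with $a,b>0$) whose resulting $n$-exponent is strictly negative, precisely because $\theta(\delta)$ lies below the corresponding threshold in the minimum defining it, the extra $\delta$ providing the strict room needed in the summands where it appears. Hence whp $\Xk=B_k$ for all $k\le s_n$.

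On that event I would identify the two explored neighbourhoods with Galton--Watson trees. Organising $(B_k)$ in BFS order produces a rooted tree whose generation sizes are exactly $(Z_k^{\sss{(u)}})$, and doing the same with the forward degrees discovered after the $u$-exploration is exhausted produces an independent copy $(Z_k^{\sss{(v)}})$; independence is automatic because the $v$-exploration only ever matches half-edges untouched by the $u$-exploration and the uniform matching of the remaining half-edges is exchangeable. The one point needing a separate remark is the \emph{first} generation: since $u$ and $v$ are uniform vertices, their degrees $d_u$ and $d_v$ follow $F_n$ rather than the size-biased $F_{B_n}$, so here I would invoke Assumption~\ref{ass:tv} ($\dTV(F_n,F)\le n^{-\kappa}$) to couple $d_u,d_v$ to two independent samples from $F$ at a cost $2n^{-\kappa}=o(1)$, absorbed into the whp statement alongside the $o(1)$ coming from the forward-degree coupling.

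I do not expect a genuine obstacle here: all the delicate work --- tracking which half-edges are already used, verifying that ``newly discovered'' neighbours are really fresh, and producing the tail estimates feeding \eqref{eq:couple-error-tv} --- is already carried out inside Lemma~\ref{lem:coupling-err} and \cite[Lemma~2.2]{HofKom16}. The only points to be careful with are that the chosen exponent $\theta(\delta)$ kills every summand of \eqref{eq:couple-error-tv} with room to spare, and that the offspring law of the first generation is $F$, not $F_B$.
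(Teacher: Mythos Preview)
Your proposal is correct and is precisely the intended derivation: the paper does not give an in-text proof of Corollary~\ref{corr:couple} at all, but simply refers both it and Lemma~\ref{lem:coupling-err} to \cite[Section~2]{HofKom16}, and the corollary is exactly the lemma specialised to $s_n=2n^{\theta(\delta)}$ with $\theta(\delta)$ chosen so that each summand of \eqref{eq:couple-error-tv} vanishes. Your handling of the first generation via Assumption~\ref{ass:tv} is also the right observation.
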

The proofs of Lemma \ref{lem:coupling-err} and Corollary \ref{corr:couple} can be found in \cite[Section 2]{HofKom16}.

Davies in \cite{D78} shows that for a BP with offspring distribution satisfying the tail behavior in \eqref{eq:FB-bounds}, the sequence of random variables $Y_k := (\tau - 2)^n\log(Z_k)$  converges almost surely. It is elementary to extend his result to a BP where the root has a different offspring distribution (see \cite{BarHofKomdist} for details). Having this result in mind for the two BPs coupled to the neighborhoods of $u,v$, 
we rewrite the generation sizes as
\be\label{eq:BP-size} Z_k^{\sss{(q)}} =: \exp{\Big \{ } \left(\frac{1}{\tau - 2}\right)^kY_k^{\sss{(q)}}{\Big \} }. \ee
Fixing a small $\delta>0$, we define for $q\in\{u,v\}$,
\be\label{def:kappa_n} \kappa_n(q):=\max\{k: Z_k^{\sss{(q)}} \le n^{\theta(\delta)} \},\ee
and then  Corollary \ref{corr:couple} implies that $\Delta B_{\kappa_n(q)}(q)$ has size $Z_{\kappa_n(q)}^{\sss{(q)}}$ since the coupling can still be maintained.
Combining \eqref{eq:BP-size} and \eqref{def:kappa_n} we obtain that $Y_{\kappa_n(q)}^{\sss{(q)}}$ converges \emph{in distribution} to two independent copies of the same random variable $Y$. The convergence now is only distributional, since there is no coupling between the BPs for different values of $n$. Using \eqref{eq:BP-size} and \eqref{def:kappa_n}, we can provide an \emph{implicit} description of $\kappa_n(q), q\in\{u,v\}$. 

\begin{claim}[Last generation of the exploration]
	\label{lem:lastgenexpl}
	Consider  $\CMD$  satisfying assumptions \ref{ass:degree-dist}-\ref{ass:tv}. Let $u$ and $v$ be two uniformly chosen vertices. Then we can couple the BFS-exploration around $u$ and $v$ to two BPs until generation that has the implicit representation
	\be\label{eq:kappa_n} \kappa_n (q)= \frac{\log\log n + \log (\theta(\delta)f_n(q)/Y_{\kappa_n(q)}^{\sss{(q)}})}{|\log(\tau-2)|}\text{  for } q\in\{u,v\} , \ee
	where $\kappa_n(q)$ is an integer, where $Y_{\kappa_n(q)}^{\sss{(q)}}$, for $q\in \{u,v\}$ are independent and converge in distribution, and $f_n(q)\in(\tau-2,1]$ describes the exponent $\theta(n)f_n(q)$ that satisfies $Z_{\kappa_n(q)}^{\sss{(q)}}=n^{\theta(n)f_n(q)}$.
\end{claim}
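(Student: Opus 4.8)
The plan is to unwind the definitions \eqref{eq:BP-size} and \eqref{def:kappa_n} and solve for $k=\kappa_n(q)$ algebraically. By Corollary~\ref{corr:couple}, the exploration around $q\in\{u,v\}$ couples whp to a branching process with generation sizes $Z_k^{\sss{(q)}}$ as long as the cumulative size stays below $2n^{\theta(\delta)}$; so the maximal such generation is exactly $\kappa_n(q)$ from \eqref{def:kappa_n}, and at that generation $|\Delta B_{\kappa_n(q)}(q)|=Z_{\kappa_n(q)}^{\sss{(q)}}$. First I would record that, by definition of $\kappa_n(q)$ as a \emph{last} generation below the threshold, we may write $Z_{\kappa_n(q)}^{\sss{(q)}}=n^{\theta(\delta)f_n(q)}$ for a (random) exponent correction $f_n(q)\le 1$; the lower bound $f_n(q)>\tau-2$ comes from the fact that one more generation of the branching process overshoots the threshold, i.e.\ $Z_{\kappa_n(q)+1}^{\sss{(q)}}>n^{\theta(\delta)}$, together with the crude deterministic control that a single generation of a $\mathrm{BP}(F_B)$ with tail \eqref{eq:FB-bounds} raises $\log Z_k$ by at most a factor $1/(\tau-2)$ up to lower-order corrections (this is precisely the content of the Davies-type scaling \eqref{eq:BP-size}).

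Next I would substitute \eqref{eq:BP-size} into the identity $Z_{\kappa_n(q)}^{\sss{(q)}}=n^{\theta(\delta)f_n(q)}$. Taking logarithms twice gives
\[
\left(\tfrac{1}{\tau-2}\right)^{\kappa_n(q)} Y_{\kappa_n(q)}^{\sss{(q)}} = \theta(\delta)f_n(q)\log n,
\]
hence
\[
\kappa_n(q)\,|\log(\tau-2)| = \log\log n + \log\!\big(\theta(\delta)f_n(q)/Y_{\kappa_n(q)}^{\sss{(q)}}\big),
\]
which is exactly \eqref{eq:kappa_n} after dividing by $|\log(\tau-2)|$. The integrality of $\kappa_n(q)$ is automatic from \eqref{def:kappa_n}; it is precisely \emph{because} $\kappa_n(q)$ is forced to be an integer that the fractional slack $f_n(q)$ (and not just $Y$) appears. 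Finally, the distributional convergence of $Y_{\kappa_n(q)}^{\sss{(q)}}$ to two independent copies of $Y$ was already established in the discussion preceding the claim: Davies' theorem \cite{D78} (extended to a root with offspring law $F$) gives a.s.\ convergence of $Y_k^{\sss{(q)}}$ for each fixed branching process, and since $\kappa_n(q)\to\infty$ whp while the two branching processes are independent, $(Y_{\kappa_n(u)}^{\sss{(u)}},Y_{\kappa_n(v)}^{\sss{(v)}})$ converges in distribution to $(Y^{\sss{(1)}},Y^{\sss{(2)}})$ with $Y^{\sss{(1)}},Y^{\sss{(2)}}$ i.i.d.; there is no joint coupling across $n$, so only distributional convergence can be claimed.

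The main obstacle — and the only place genuine work is needed rather than bookkeeping — is pinning down the range $f_n(q)\in(\tau-2,1]$, i.e.\ showing that the ``overshoot'' of the generation sizes past the threshold $n^{\theta(\delta)}$ is controlled. For the upper bound $f_n(q)\le 1$ one just uses $Z_{\kappa_n(q)}^{\sss{(q)}}\le n^{\theta(\delta)}$ (with a trivial adjustment if one wants strict inequality, or one simply allows $f_n(q)=1$ on a null-probability set). For the lower bound $f_n(q)>\tau-2$ one needs that going from generation $\kappa_n(q)$ to $\kappa_n(q)+1$ multiplies $\log Z_k^{\sss{(q)}}$ by at most $1/(\tau-2)$ up to a factor $1+o(1)$; this follows from writing $\log Z_{k+1}^{\sss{(q)}}=(1/(\tau-2))^{k+1}Y_{k+1}^{\sss{(q)}}$ and $\log Z_k^{\sss{(q)}}=(1/(\tau-2))^{k}Y_k^{\sss{(q)}}$ and using that the ratio $Y_{k+1}^{\sss{(q)}}/Y_k^{\sss{(q)}}\to 1$ a.s.\ from the Davies convergence, so that $Z_{\kappa_n(q)+1}^{\sss{(q)}}=\big(Z_{\kappa_n(q)}^{\sss{(q)}}\big)^{(1+o(1))/(\tau-2)}>n^{\theta(\delta)}$ forces $Z_{\kappa_n(q)}^{\sss{(q)}}>n^{\theta(\delta)(\tau-2)(1+o(1))}$. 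Absorbing the $(1+o(1))$ into the constants (or stating $f_n(q)\in(\tau-2-o(1),1]$, which suffices for all later uses) completes the argument.
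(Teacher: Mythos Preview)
Your proposal is correct and follows essentially the same route as the paper: invoke Corollary~\ref{corr:couple} for the validity of the coupling up to $\kappa_n(q)$, then algebraically invert \eqref{eq:BP-size} at $k=\kappa_n(q)$ to obtain \eqref{eq:kappa_n}, with Davies' convergence supplying the distributional limit of $Y_{\kappa_n(q)}^{\sss{(q)}}$. If anything you are more careful than the paper's own proof, which dismisses the derivation as an ``elementary rearrangement'' and does not explicitly justify the range $f_n(q)\in(\tau-2,1]$; your overshoot argument via $Y_{k+1}^{\sss{(q)}}/Y_k^{\sss{(q)}}\to 1$ is the natural way to pin this down, and your honest remark that one really gets $f_n(q)\in(\tau-2-o(1),1]$ is accurate and harmless for the sequel.
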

The message of this claim is that the coupling can be maintained until $\log \log n/|\log(\tau-2)|$ + a tight random variable many generations.
\begin{proof}
Fix $\delta>0$ small enough and set  $\theta(\delta)$ as in Corollary \ref{corr:couple}. Corollary \ref{corr:couple} then implies that the coupling error converges to zero as long as $|B_{k_1}(u)\cup B_{k_2}(v)|\le 2n^{\theta(\delta)}$. T definition of $\kappa(q)$ in \eqref{eq:kappa_n} implies that this is indeed satisfied for $k_1:=\kappa_n(u), k_2:=\kappa_n(v)$. 
	The value of $\kappa_n(q)$ in \eqref{eq:kappa_n} is  obtained by an elementary rearrangement of the formula 
	\eqref{eq:BP-size} when $k$ is replaced by $\kappa_n(q)$, and we took the integer part of the obtained expression. Note that $\kappa_n(q)$ is well-defined this way since the generation sizes are increasing double-exponentially for all large enough $k$ due to \eqref{eq:BP-size} and the fact that $Y_k^{\sss{(q)}}$ would converge if we would let $k$ tend to infinity, and as a result the total size of $B_k(q)$ is $1+o(1)$ times the last generation size. 
\end{proof}
Next we make \eqref{eq:lower-heuristic} quantitative by  giving a lower bound on the length of the path from $q$ to generation $\kappa_n(q)$. Recall that $\Delta B_k(w)$ is the set of vertices at distance $k$ from a vertex $w$ in $\CMD$. 
\begin{lemma}[Lower bound on shortest path length]
	\label{lem:lowerboundexpl}
	Consider $\CMD$  satisfying assumptions \ref{ass:degree-dist}-\ref{ass:tv} with i.i.d.\ edge lengths from distribution $L$ with distribution function $F_L$ satisfying \eqref{eq:weigthdist}. Let $u,v$ be two uniformly chosen vertices. Then, for $q\in \{u,v\}$, with $\kappa_n(q)$ as in \eqref{eq:kappa_n}, 
	\be\label{eq:gen-lower} \lim_{n \to \infty} \bP\left(d_L(u, \Delta B_{\kappa_n(q)}(q)) > (1-\ve) \an \right) = 1.\ee
\end{lemma}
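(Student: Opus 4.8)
The plan is to combine the coupling to branching processes (Corollary \ref{corr:couple}) with a first-moment/union-bound argument on the minimal edge-lengths connecting consecutive generations. Fix $q\in\{u,v\}$ and work on the event that the exploration around $q$ is successfully coupled to a branching process with generation sizes $Z_k^{\sss{(q)}}$ up to generation $\kappa_n(q)$; by Corollary \ref{corr:couple} this event has probability $1-o(1)$. Recall from \eqref{eq:lower-heuristic} that
\be\label{eq:lb-start}
d_L(q,\Delta B_{\kappa_n(q)}(q))\;\ge\;\sum_{i=0}^{\kappa_n(q)-1}\min_{x\in\Delta B_i^G(q),\,y\in\Delta B_{i+1}^G(q)}L_{(x,y)}\;=:\;\sum_{i=0}^{\kappa_n(q)-1}W_i,
\ee
where $W_i$ is the minimum of the edge-lengths between generations $i$ and $i+1$. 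Conditionally on the tree shape, $W_i$ is the minimum of at most $Z_{i+1}^{\sss{(q)}}$ i.i.d.\ copies of $L$ (one for each individual in generation $i+1$, connecting it to its parent), so for any threshold $t_i\ge0$,
\be\label{eq:Wi-tail}
\bP\bigl(W_i\le t_i\,\big|\,\text{tree}\bigr)\;\le\;Z_{i+1}^{\sss{(q)}}\,F_L(t_i).
\ee

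Next I would choose the thresholds $t_i$ so that $\sum_i t_i$ matches the target $(1-\ve)\an$ while keeping the failure probability small. Using \eqref{eq:BP-size}, $Z_{i+1}^{\sss{(q)}}=\exp\{(\tau-2)^{-(i+1)}Y_{i+1}^{\sss{(q)}}\}$, and since $Y_k^{\sss{(q)}}$ converges a.s.\ it lies in a bounded interval $[c_1,c_2]\subset(0,\infty)$ for all $i$ up to $\kappa_n(q)$, whp. Hence $Z_{i+1}^{\sss{(q)}}\le \exp\{c_2(\tau-2)^{-(i+1)}\}$. I would set $t_i:=F_L^{\sss{(-1)}}\bigl(\mathrm{e}^{-2c_2(\tau-2)^{-(i+1)}}\bigr)$ — up to the constant factor $2c_2$ and the reindexing $i+1\mapsto$ roughly $i\log(1/(\tau-2))/1$ this is exactly a term of the divergent-type sum; a change of variables shows $\sum_{i=0}^{\kappa_n(q)-1}t_i$ is comparable (up to a multiplicative constant depending only on $\tau$) to $\an$, and by the divergence hypothesis \eqref{eq:weigthdist} one can absorb constants to get $\sum_i t_i\ge(1-\ve)\an$ for $n$ large. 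With this choice, \eqref{eq:Wi-tail} gives $\bP(W_i\le t_i\mid\text{tree})\le \exp\{c_2(\tau-2)^{-(i+1)}\}\cdot \exp\{-2c_2(\tau-2)^{-(i+1)}\}=\exp\{-c_2(\tau-2)^{-(i+1)}\}$, which is summable over $i\ge0$ (indeed the sum is dominated by its last term and still tends to $0$). A union bound over $i=0,\dots,\kappa_n(q)-1$ then yields
\be\label{eq:union}
\bP\Bigl(\exists i<\kappa_n(q):W_i\le t_i\Bigr)\;\le\;\sum_{i\ge0}\exp\{-c_2(\tau-2)^{-(i+1)}\}\;=\;o(1),
\ee
so whp $\sum_i W_i>\sum_i t_i\ge(1-\ve)\an$, which combined with \eqref{eq:lb-start} proves \eqref{eq:gen-lower}.

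The main obstacle is the bookkeeping around the random index $\kappa_n(q)$ and the a.s.-convergent but random martingale limit $Y^{\sss{(q)}}$: one must be careful that the thresholds $t_i$ are chosen measurably with respect to information available before revealing the edge-lengths (they depend only on the tree shape, which is fine) and that the "bounded interval $[c_1,c_2]$" statement is uniform enough — formally, first condition on the event $\{c_1\le Y_k^{\sss{(q)}}\le c_2\ \forall k\le\kappa_n(q)\}$, whose probability can be made $\ge1-\ve$ by taking $c_1$ small and $c_2$ large (using a.s.\ convergence plus the explicit lower bound that $Z_k^{\sss{(q)}}\ge$ (previous generation), which forces $Y_k^{\sss{(q)}}$ bounded away from $0$ for $k$ large). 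A secondary point requiring care is the reindexing in the comparison $\sum_{i<\kappa_n(q)}F_L^{\sss{(-1)}}(\mathrm{e}^{-c(\tau-2)^{-(i+1)}})\asymp \an$: since $\kappa_n(q)\sim\log\log n/|\log(\tau-2)|$ and the summands in $\an$ are indexed by $i\le\log\log n/|\log(\tau-2)|$ with argument $\mathrm{e}^{-(\tau-2)^{-i}}$, the two sums run over essentially the same range, and a constant factor $c=2c_2$ inside the double exponential only shifts the index by $O(1)$, which is negligible for a divergent sum; I would make this precise with a monotonicity-of-$F_L^{\sss{(-1)}}$ argument and the divergence hypothesis. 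Everything else is a routine union bound.
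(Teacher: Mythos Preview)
Your approach is essentially the paper's: bound $d_L(q,\Delta B_{\kappa_n(q)}(q))$ below by the sum of generation-to-generation minima, control each minimum via the generation-size bound $Z_{i+1}^{\sss{(q)}}\le\exp\{c(\tau-2)^{-(i+1)}\}$, and use a union bound. However, there is a genuine gap at \eqref{eq:union}. With a \emph{fixed} constant $c_2$ the sum
\[
\sum_{i\ge 0}\exp\bigl\{-c_2(\tau-2)^{-(i+1)}\bigr\}
\]
is \emph{not} $o(1)$: the terms are decreasing in $i$ (since $(\tau-2)^{-(i+1)}$ increases), so the sum is dominated by its \emph{first} term $\mathrm{e}^{-c_2/(\tau-2)}$, a strictly positive constant independent of $n$. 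Your parenthetical remark that the sum is ``dominated by its last term and still tends to $0$'' is therefore backwards and incorrect. Consequently the union bound only yields a constant error probability, not one that vanishes as $n\to\infty$.

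The paper's proof confronts exactly this tension. It replaces your fixed $c_2$ by $C_n:=\max\{\sup_{k\le\kappa_n(q)}Y_k^{\sss{(q)}},h(n)\}$ and lets $h(n)\to\infty$, so that the union-bound error $\le C_1\mathrm{e}^{-C_n\xi}\to 0$. But then the index shift in the comparison of $\sum_i F_L^{\sss{(-1)}}(\mathrm{e}^{-C_n(1+\xi)(\tau-2)^{-i}})$ with $a_n$ is $\widetilde C_n:=\log(C_n(1+\xi))/|\log(\tau-2)|\to\infty$, not $O(1)$, and one must control the lost initial terms. The paper does this by defining $R_n(\ve)$ as the largest index such that the first $R_n(\ve)$ summands of $a_n$ contribute at most $(\ve/2)a_n$ (which tends to infinity by the divergence hypothesis \eqref{eq:weigthdist}), and then choosing $h(n)$ so that $\widetilde C_n=R_n(\ve)$. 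Your sketch can be repaired along these lines, or alternatively by a two-parameter argument (first fix $\eta>0$, choose $c_2=c_2(\eta)$ so large that both $\bP(\sup_k Y_k^{\sss{(q)}}>c_2)<\eta$ and the union-bound error $<\eta$, then for this fixed $c_2$ the $O(1)$ index shift is harmless for $n$ large; finally let $\eta\downarrow 0$), but as written the step \eqref{eq:union} does not go through.
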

\begin{proof} Under the coupling between the neighborhoods of $u,v$ to the BPs established in Corollary \ref{corr:couple} and Claim \ref{lem:lastgenexpl}, for all $i\le \kappa_n(q),  \Delta B_{i}(q)=Z_i^{\sss{(q)}}$, the size of generation $i$ in the BP coupled to the neighborhood of $u$. Using the idea in \eqref{eq:lower-heuristic},
	$d_L(u, \Delta B_{\kappa_n(q)}(q))$ is longer than the sum of the minimum edge-lengths between consecutive generations. I.e.,
	\be \label{eq:wdkappa-lowerb} d_L(u, \Delta B_{\kappa_n(q)}(u)) \geq \sum_{i = 0}^{\kappa_n(q)-1} \min \{L_{i, 1}^{\sss{(q)}}, \ldots L_{i, Z_i^{\sss{(u)}}}^{\sss{(q)}}\}, \ee
	where $L_{i,j}^{\sss{(u)}}$ are i.i.d.\ for all $i,j$ and $q\in\{u,v\}$. 
	We let 
	\be \label{eq:Cn} C_n := \max\{\sup_{1\leq k\leq\kappa_n(q)}Y_k^{\sss{(q)}}, h(n)\} \ee
	 with $h(n)$ a function defined later on. By \eqref{eq:BP-size}, $Z_i^{\sss{(q)}} = \exp\{ \left(\tau - 2\right)^{-i}Y_i^{\sss{(q)}}\}$, thus, using, \eqref{eq:Cn}, 
	 \be \label{eq:ziq-upper}Z_i^{\sss{(q)}} \le \exp\{ \left(\tau - 2\right)^{-i}C_n \}.\ee
	 For i.i.d.\ $L_j$, the following tail bound holds for any $N\in \N, z(N)>0$:
	\be\label{eq:lower-min} \bP(\min_{j\leq N} L_j > z(N)) = (1-F_L(z(N)))^N \geq 1 - NF_L(z(N))\ee which is at most $1/N^\xi$ when we set, for some $\xi > 0, z(N) := F_L^{\sss{(-1)}}(1/N^{1+\xi})$.
Using that the minimum in \eqref{eq:wdkappa-lowerb} is non-increasing when increasing the number of variables involved, by \eqref{eq:ziq-upper}, we can set $N$ to be $\exp\{ \left(\tau - 2\right)^{-i}C_n \}$ to estimate the $i$th term in \eqref{eq:wdkappa-lowerb} from below using \eqref{eq:lower-min}. Conditioning on the value $C_n, \kappa_n(q)$, combined with a union bound, yields that the inequality
		 \begin{align} \label{eq:min-lowerbound} \sum_{i = 0}^{\kappa_n(q)-1} \min \{L_{i, 1}, \ldots L_{i, Z_i^{\sss{(u)}}}\} \geq \sum_{i = 0}^{\kappa_n(q)-1  } F_L^{(-1)}\left(e^{- (\tau - 2)^{-i}C_n(1+\xi)}\right), \end{align} holds 
	 with error probability (conditioned on $C_n, \kappa_n(q)$) at most 
	\be \label{eq:errprob} E(C_n) :=  \sum_{i = 0}^{\infty} e^{-\left(\frac{1}{\tau - 2}\right)^iC_n\xi}  \leq C_1 e^{-C_n\xi} \ee
	for some constant $C_1 > 0 $. Combining \eqref{eq:wdkappa-lowerb} and \eqref{eq:min-lowerbound} yields that under the coupling, with error probability given in \eqref{eq:errprob}, 
	\be\label{eq:ineq-wl} d_L(q, \Delta B_{\kappa_n(q)}(q)) \geq \sum_{i = 0}^{\kappa_n(q)-1} F_L^{(-1)}\left(e^{-\left(\frac{1}{\tau - 2}\right)^iC_n(1+\xi)}\right). \ee
	Next we transform the rhs to match the format in \eqref{eq:gen-lower}.
 With $\lceil a\rceil=\min\{m\in \Z, m\ge a\}, \rfloor b\rfloor = \max\{m\in \Z, m\le a\}$, we use the following  inequalities valid for monotonous non-increasing functions $g$ with $g(0)<\infty$:
		\be \label{eq:tranformationineq} \sum_{k = \lceil a\rceil +1}^{\lfloor b \rfloor} g(k)\  {\buildrel (\star) \over \leq} \int_a^b g(x) \mathrm dx, \qquad \ \int_{0}^{z+1} g(x) \mathrm dx\  {\buildrel (\triangle) \over \leq} 
	\ \sum_{k = 0}^z g(k). \ee
	We use $(\triangle)$ to bound the rhs of \eqref{eq:ineq-wl} from below, then we carry out the variable transformation $1/(\tau -2)^{x} C_n(1+\xi) = 1/(\tau -2)^{y}$, and transform the integral back to a sum using $(\star)$. The variable transformation shifts the summation boundaries by $\widetilde{C}_n := \log (C_n(1+\xi)) / |\log(\tau - 2)|$, and we obtain that 
	\be\label{eq:sum-lowerb} \sum_{i = 0}^{\kappa_n(q)-1} F_L^{(-1)}\left(\mathrm{e}^{-\left(\frac{1}{\tau - 2}\right)^iC_n(1+\xi)}\right) \geq \sum_{i = \lceil \widetilde{C}_n + 1 \rceil}^{\kappa_n(q) + \lfloor \widetilde{C}_n \rfloor} F_L^{(-1)}\left(\mathrm{e}^{-\left(\frac{1}{\tau - 2}\right)^i}\right). \ee We bound the upper summation boundary on the rhs from below.
	Recall $C_n$ from \eqref{eq:Cn}, then
	 	\be \label{eq:ineqCn} \lfloor \widetilde{C_n} \rfloor = \left\lfloor \frac{\log ( (1+\xi)\max\{\sup_{1\leq k\leq\kappa_n}Y_k^{\sss{(q)}}, h(n)\} )}{|\log(\tau - 2)|}  \right\rfloor  \geq \frac{\log Y_{\kappa_n(q)}}{|\log(\tau-2)|}-1\ee
Using now the formula for $\kappa_n(q)$ from \eqref{eq:kappa_n}, 
\be\label{eq:upper-boundary} \kappa_n(q) + \lfloor \widetilde{C}_n \rfloor \ge \frac{\log \log n + \log (\theta(n)f_n(q)(\tau-2))}{|\log (\tau-2)|}.\ee
	Next, the lower summation boundary on the rhs of \eqref{eq:sum-lowerb} is not $1$, and, if $\wit C_n\to \infty$, then this might cause too much difference from the desired sum in \eqref{eq:gen-lower}.  Thus,	 for any fixed $\ve > 0$ we define
	\be\label{eq:Rn} R_n(\ve) := \max_z \left\{ \sum_{k=1}^{z-1}F_L^{(-1)}\left(e^{-\left(\frac{1}{\tau - 2}\right)^i}\right) \leq \frac{\ve}{2} \sum_{i = 1}^{\lfloor\log\log n/|\log (\tau-2)|\rfloor } F_L^{(-1)}\left(e^{-\left(\frac{1}{\tau - 2}\right)^i}\right) \right\} .\ee
	Since the sum on the rhs between the brackets tends to infinity with $n$, so will $R_n(\ve)$. Setting $\widetilde{C_n} = R_n(\ve)$, combined with \eqref{eq:upper-boundary} and the fact that the summands tend to zero then implies that 
	\be \label{eq:almost-final-lowerb} \sum_{i = \lceil \tilde{C}_n + 2 \rceil}^{\kappa_n + \lfloor \widetilde{C}_n \rfloor } F_L^{(-1)}\left(e^{-\left(\frac{1}{\tau - 2}\right)^i}\right) > (1-\ve)\an, \ee
	as desired. 
The choice $\widetilde{C}_n = \log (C_n(1+\xi)) / |\log(\tau - 2)|=R_n(\ve)$, establishing the choice	$h(n) = 1/(1+\xi)\left(\tau-2\right)^{-R_n(\ve)}$ in \eqref{eq:Cn}.  Since $R_n(\ve)$ tends to infinity, so will $C_n$, ensuring that the error probability in \eqref{eq:errprob} tends to zero as well. This finishes the proof of the lower bound.
\end{proof}
Next we do some preparations for the proof of the upper bound. First we investigate the number of generations we need to explore to reach a vertex of degree at least $\widetilde{K_n}$. 
\begin{lemma}[Generations needed to reach degree $\widetilde{K_n}$]
	\label{lem:generation-Kn}
	Consider $\CMD$  satisfying Assumptions \ref{ass:degree-dist}-\ref{ass:tv}. Let $u,v$ be two uniformly chosen vertices and $(\widetilde{K_n})_{n\geq 1} = O(\log n )$ a sequence that tends to infinity with $n$. Then, for $q\in \{u,v\}$, for any $M$ with $ M|\log(\tau-2)|>1$, 
	\be \lim_{n \to \infty}\bP\left( \max_{w \in \Delta B_{M \log \log \wit K_n}(q)} d_w < \widetilde{K_n} \right) = 0. \ee

	\end{lemma}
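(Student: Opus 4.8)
The plan is to push the statement through the branching-process (BP) coupling of Corollary~\ref{corr:couple}: I will show that generation $t_n:=\lfloor M\log\log\widetilde{K_n}\rfloor$ of the exploration already carries, whp, so many vertices that the heaviest among them has degree exceeding $\widetilde{K_n}$, simply because the coupled BP grows doubly exponentially while its offspring law has only a polynomial tail.

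First I would check that the coupling is still available at depth $t_n$. Since $\widetilde{K_n}=O(\log n)$ we have $\log\widetilde{K_n}=O(\log\log n)$, hence $t_n=O(\log\log\log n)$, which is $o(\kappa_n(q))$ because $\kappa_n(q)=\log\log n/|\log(\tau-2)|+\Op(1)$ by Claim~\ref{lem:lastgenexpl}. Consequently, whp the BFS exploration around $q$ is coupled to the BP of Corollary~\ref{corr:couple} past generation $t_n$, so that $|\Delta B_j(q)|=Z_j^{\sss{(q)}}$ for $j\le t_n$ and, by the BFS order of discovery, the forward degrees of the vertices of $\Delta B_{t_n}(q)$ are i.i.d.\ copies of $B\sim F_B$ that are independent of $Z_{t_n}^{\sss{(q)}}$. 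Since $d_w$ is at least the forward degree of $w$, it suffices to bound the probability that all of these i.i.d.\ copies of $B$ fall below $\widetilde{K_n}$.

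Next I would lower bound $Z_{t_n}^{\sss{(q)}}$. Writing $\beta:=M|\log(\tau-2)|>1$, formula \eqref{eq:BP-size} gives $Z_{t_n}^{\sss{(q)}}=\exp\{(\tau-2)^{-t_n}Y_{t_n}^{\sss{(q)}}\}\ge\exp\{c(\tau-2)(\log\widetilde{K_n})^{\beta}\}$ on the event $\{Y_{t_n}^{\sss{(q)}}\ge c\}$, because $(\tau-2)^{-t_n}\ge(\tau-2)(\log\widetilde{K_n})^{\beta}$. By \cite{D78} (extended to a differing root offspring law in \cite{BarHofKomdist}) we have $Y_k^{\sss{(q)}}\to Y$ a.s.\ with $\bP(Y>0)=1$; as $t_n\to\infty$ this gives $\limsup_n\bP(Y_{t_n}^{\sss{(q)}}<c)\le\bP(Y\le c)$, which tends to $0$ as $c\downarrow0$. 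Now, conditionally on $Z_{t_n}^{\sss{(q)}}=m$, the probability that every vertex of $\Delta B_{t_n}(q)$ has degree $<\widetilde{K_n}$ is at most $F_B(\widetilde{K_n})^m\le\exp\{-m(1-F_B(\widetilde{K_n}))\}$, and \eqref{eq:FB-bounds} yields $1-F_B(\widetilde{K_n})\ge\widetilde{K_n}^{-(\tau-2)}\e^{-C^\star(\log\widetilde{K_n})^{\gamma}}$. Taking $m=m_n:=\exp\{c(\tau-2)(\log\widetilde{K_n})^{\beta}\}$, we get for every fixed $c>0$
\[
m_n\bigl(1-F_B(\widetilde{K_n})\bigr)\;\ge\;\exp\Bigl\{c(\tau-2)(\log\widetilde{K_n})^{\beta}-(\tau-2)\log\widetilde{K_n}-C^\star(\log\widetilde{K_n})^{\gamma}\Bigr\}\;\To\;\infty,
\]
since $\beta>1>\gamma$ and $\widetilde{K_n}\to\infty$. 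Combining the coupling error, the event $\{Y_{t_n}^{\sss{(q)}}<c\}$, and this bound, I obtain
\[
\bP\Bigl(\max_{w\in\Delta B_{t_n}(q)}d_w<\widetilde{K_n}\Bigr)\;\le\;o(1)+\bP\bigl(Y_{t_n}^{\sss{(q)}}<c\bigr)+\e^{-m_n(1-F_B(\widetilde{K_n}))};
\]
letting first $n\to\infty$ and then $c\downarrow0$ completes the proof.

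The step I expect to cost the most care is the first one: verifying that the coupling of Corollary~\ref{corr:couple}/Claim~\ref{lem:lastgenexpl} genuinely reaches depth $t_n$ (this is where $\widetilde{K_n}=O(\log n)$ is used, forcing $t_n$ to be no larger than order $\log\log\log n$), and that inside the coupling the forward degrees of the last revealed generation are genuinely fresh i.i.d.\ variables, independent of the generation size. The hypothesis $M|\log(\tau-2)|>1$ enters at exactly one place: it is what makes the inner exponent $(\log\widetilde{K_n})^{\beta}$ dominate the power-law correction $\log\widetilde{K_n}$ in the first displayed limit, uniformly over the arbitrarily small constant $c$ coming from the a.s.\ positivity of the martingale limit $Y$.
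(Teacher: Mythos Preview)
Your proposal is correct and follows essentially the same approach as the paper: couple to the BP via Corollary~\ref{corr:couple}/Claim~\ref{lem:lastgenexpl}, use \eqref{eq:BP-size} together with the a.s.\ positivity of the Davies limit to lower bound the generation size $Z_{t_n}^{\sss{(q)}}$, and then bound $F_B(\widetilde{K_n})^{Z_{t_n}^{\sss{(q)}}}$ via the tail estimate \eqref{eq:FB-bounds}. The only cosmetic difference is that the paper chooses a vanishing threshold $1/(\log\widetilde{K_n})^{\delta/2}$ for $Y_{M_n}^{\sss{(q)}}$ (with $1+\delta=M|\log(\tau-2)|$) and takes a single limit, whereas you fix a threshold $c$ and then send $c\downarrow0$ after $n\to\infty$; both handle the same issue and yield the same conclusion.
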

\begin{proof} For brevity we write $M_n:=M \log \log \wit K_n$. By Corollary \eqref{corr:couple}, and Claim \ref{lem:lastgenexpl}, $B_k(q), k\le \kappa_n(q)$, for $q\in\{u,v\}$ can be coupled to two BPs, where, in each generation the degrees are i.i.d. from distribution $B$ as in Assumption \ref{ass:tv}. Recall \eqref{eq:BP-size},  write $1+\delta = M|\log(\tau-2)|$ and condition on whether $Y_{M_n}^{\sss{(q)}}$ is less than $1/(\log\widetilde{K_n})^{\delta/2}$ or not. Then
	\begin{align}
	\nonumber \; \bP\left( \max_{v \in \Delta B_{M_n}(u)} d_v \leq \widetilde{K_n}\right) \leq  & \; \bP \left( \max_{v \in Z_{M_n}^{\sss{(u)}}} B_v \leq \widetilde{K_n} \mid Y_{M_n}^{\sss{(u)}} > 1/(\log \widetilde{K_n})^{\delta/2}\right) \\ & + \bP \left(Y_{M_n}^{\sss{(u)}} < 1/(\log \widetilde{K_n})^{\delta/2}\right). \label{eq:maxdegreeineq}
	\end{align}
	By Davies \cite{D78}, the limiting variable $\lim_{n\to \infty} Y_k^{\sss{(u)}}$ is almost surely positive on survival of the BP. 
 By Assumption \ref{ass:degree-dist}, $\Pv(B\ge 1)=1$ and thus the BP cannot go extinct and therefore $\bP(Y_{M_n}^{\sss{(u)}} = 0) = 0$. Thus, the second term on the rhs in \eqref{eq:maxdegreeineq} converges to zero. For the first term,
	\be \nonumber\bP\Big( \max_{v \in Z_{M_n}^{\sss{(u)}}} B_v \leq \widetilde{K_n} \mid Z_{M_n}^{\sss{(u)}}\Big) = (F_B(\widetilde{K_n}))^{Z_{M_n}^{\sss{(u)}}}. \ee
	Using the lower bound on $F_B$ from \eqref{eq:FB-bounds}, with $L(x) := \exp\{-c(\log x)^\gamma\}$  we obtain
	\be \label{eq:maxdegreebound} (F_B(\widetilde{K_n}))^{Z_{M_n}^{\sss{(u)}}} \leq \text{{\Big(}}1-\frac{L(\widetilde{K_n})}{\widetilde{K_n}^{\tau-2}}\text{{\Big)}}^{Z_{M_n}^{\sss{(u)}}} \leq \exp\{-L(\widetilde{K_n})Z_{M_n}^{\sss{(u)}}/\widetilde{K_n}^{\tau-2}\} .\ee
	Using that $Z_{M_n}^{\sss{(u)}} = \exp\{(\tau-2)^{-M_n}Y^{\sss{(u)}}_{M_n}\}$ by \eqref{eq:BP-size} in \eqref{eq:maxdegreebound}, we obtain that
	\begin{align}
		& \nonumber\bP \Big( \max_{v \in Z_{M_n}^{\sss{(u)}}} B_v \leq \widetilde{K_n} \mid Y_{M_n}^{\sss{(u)}} > \frac{1}{(\log \widetilde{K_n})^{\delta/2}}\Big)
		\leq \exp \text{\Big\{}-\frac{\exp\{-L(\log \widetilde{K_n})^\gamma\}\exp\{(\log \widetilde{K_n})^{1+\delta/2}\}}{\widetilde{K_n}^{\tau -2}}  \text{\Big\}}  \\
		& \ \ \ \nonumber \leq \exp \Big \{ -\exp \left\{ (\log\widetilde{K_n})^{1+\delta/2}(1-L(\log\widetilde{K_n})^{\gamma - 1-\delta/2})-(\tau-2)(\log\widetilde{K_n})^{-\delta/2} \right\} \Big \} \\
		& \ \ \ \label{eq:probfirstpart} \leq \exp \Big\{ -\exp \big\{ 1/2(\log\widetilde{K_n})^{1+\delta/2} \big\} \Big\} \xrightarrow{n \rightarrow \infty} 0.
	\end{align}
	where we used that $M_n = M\log \log K_n$, with $1+\delta = M|\log (\tau - 2)|$, the last inequality holds for $\widetilde{K_n}$ large. Thus both probabilities on the rhs in \eqref{eq:maxdegreeineq} tend to zero as $\widetilde{K_n}$ tends to infinity, which completes the proof.
\end{proof}

In the proof of the upper bound of the main theorem we run the exploration algorithm until we reach a vertex of degree $\widetilde{K_n}$. The path to this vertex is the sum of i.i.d.\ copies of edge weights. We show that whp this sum is less than some $\ve_1 > 0$ times the denominator of the lhs of \eqref{eq:mainthm}.
\begin{lemma}[Upper bound on the length of the path in the exploration process]
	\label{lem:upperboundexpl}
	Let $(L_i)_{i\ge 1}$ be i.i.d.\  from distribution $F_L$ satisfying \eqref{eq:weigthdist}. Then for all $\ve_1 >0$ there exists a choice of $M_n$ such that $M_n$ tends to infinity with $n$ and
	\be \label{eq:pathlength} \lim_{n \to \infty} \bP\Bigg(\sum_{i = 0}^{M_n}L_i < \ve_1\an\Bigg) = 1. \ee
	
\end{lemma}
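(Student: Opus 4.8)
The plan is to use that hypothesis \eqref{eq:weigthdist} is exactly the condition forcing the deterministic quantity $a_n:=\an$ to diverge, and then to observe that a number of i.i.d.\ summands which grows slowly enough with $n$ must be negligible against $a_n$, no matter how $L$ behaves in its right tail.

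The first step will be to verify that \eqref{eq:weigthdist} implies $a_n\to\infty$. Writing $g(t):=F_L^{(-1)}(\mathrm e^{-\mathrm e^t})$, the function $g$ is non-negative and non-increasing on $[0,\infty)$ because $F_L^{(-1)}$ is non-decreasing and $t\mapsto \mathrm e^{-\mathrm e^t}$ is decreasing, and $g(0)=F_L^{(-1)}(\mathrm e^{-1})<\infty$ since $L$ is a.s.\ finite. Setting $c:=|\log(\tau-2)|>0$, the $i$-th summand of $\an$ is exactly $g(ci)$, and for a non-increasing locally integrable $g$ one has $\sum_{i\ge1}g(ci)<\infty$ iff $\int_0^\infty g<\infty$ iff $\sum_{k\ge1}g(k)<\infty$. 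Hence $\sum_{i\ge1}g(ci)=\infty$, and since the upper summation index $\lfloor(\log\log n)/c\rfloor$ is non-decreasing in $n$ and tends to $\infty$, the sequence $a_n=\sum_{i=1}^{\lfloor(\log\log n)/c\rfloor}g(ci)$ is non-decreasing with $a_n\to\infty$.

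The second step is a diagonalisation. For each fixed $m$ the partial sum $S_m:=\sum_{i=0}^m L_i$ is a finite sum of a.s.\ finite i.i.d.\ variables, hence a.s.\ finite, so $\bP(S_m\ge\ve_1 a_n)\to0$ as $n\to\infty$ for every fixed $m$ and every $\ve_1>0$. I would then choose a strictly increasing sequence $(n_m)_{m\ge0}$ with $\bP(S_m\ge\ve_1 a_n)\le 1/(m+1)$ for all $n\ge n_m$, and set $M_n:=\max\{m\ge0:\ n_m\le n\}$. This $M_n$ is non-decreasing, and for each fixed $K$ one has $M_n\ge K$ once $n\ge n_K$, so $M_n\to\infty$; and for $n\ge n_0$, putting $m:=M_n$ one has $n\ge n_m$, whence $\bP\big(\sum_{i=0}^{M_n}L_i\ge\ve_1\an\big)\le 1/(M_n+1)\to0$, which is \eqref{eq:pathlength}. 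If the application additionally requires $M_n$ to grow no faster than some prescribed slowly diverging function (in the proof of the main theorem $M_n$ must be of the form $M\log\log\wit K_n$ with $\wit K_n=O(\log n)$, i.e.\ $M_n=O(\log\log\log n)$, to be compatible with Lemma \ref{lem:generation-Kn}), one simply caps $M_n$ by that function: this keeps $M_n\to\infty$ and only decreases $S_{M_n}$, so the probability bound is preserved.

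The one genuinely delicate point I expect is the convergence-type comparison $\sum_{i\ge1}g(ci)=\infty\iff\sum_{k\ge1}g(k)=\infty$: since $c$ may be larger or smaller than $1$ depending on $\tau$, one must argue carefully, using the monotonicity of $g$ (e.g.\ sandwiching $g(ci)$ between integrals of $g$ over integer blocks, or comparing both sums to $\int_0^\infty g$), that rescaling the argument by the fixed constant $c$ does not change whether the series converges. Everything else is routine; in particular no moment assumption on $L$ is needed, because $S_{M_n}$ is, for each $n$, a sum of only a deterministic and slowly growing number of terms — which is precisely why the freedom to let $M_n\to\infty$ arbitrarily slowly (and in particular depending on the growth rate of $a_n$, which cannot be fixed in advance) is essential.
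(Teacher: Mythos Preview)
Your argument is correct but follows a genuinely different route from the paper. The paper reduces the lemma to a general claim---for any i.i.d.\ sequence $(L_i)$ and any deterministic $a_m\to\infty$ there is $z_L(m)\to\infty$ with $\bP\big(\sum_{i\le z_L(m)}L_i\le a_m\big)\to1$---and proves that claim constructively by a case split on whether $\bE[L]<\infty$: in the finite-mean case it sets $z_L(m)\approx a_m^{\delta}$ and applies Markov's inequality, while in the infinite-mean case it truncates $L$ at a level $T_m=g^{(-1)}((z_L(m))^{1+\ve_2})$, chooses $z_L(m)$ implicitly via $z_L(m)T_m=a_m^{1-\delta}$, and combines a union bound on the truncation event with Markov on the truncated sum. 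Your diagonalisation bypasses all of this: since each $S_m$ is a.s.\ finite and $a_n\to\infty$, one gets $\bP(S_m\ge\ve_1 a_n)\to0$ for every fixed $m$ with no tail information whatsoever, and the existence of a suitable $M_n\to\infty$ follows abstractly. What the paper's approach buys is an explicit growth rate for $M_n$ in terms of the tail of $L$ and the growth of $a_n$; what your approach buys is brevity and the avoidance of any case distinction or moment hypothesis. You also make explicit (via the integral comparison for the monotone $g(t)=F_L^{(-1)}(\mathrm e^{-\mathrm e^t})$) the step $a_n\to\infty$ that the paper leaves implicit, and your remark about capping $M_n$ by any prescribed diverging function is exactly what is needed downstream when $M_n$ must equal $M\log\log\wit K_n$ with $\wit K_n=O(\log n)$.
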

The lemma immediately follows from the following, more general result.
\begin{claim} Let $(L_i)_{i\ge 1}$ be i.i.d. random variables from distribution $F_L$, and let $(a_m)_{m\ge 1}$ be an arbitrary sequence that tends to infinity as $m\to \infty$. Then, there exists a deterministic sequence $(z_L(m))_{m\ge 1}$ with $\lim_{m\to \infty} z_L(m)=\infty$ such that
\[\lim_{m\to \infty}\Pv\Big( \sum_{i=1}^{z_L(m)} L_i \le a_m\Big) =1.\]
  
\end{claim}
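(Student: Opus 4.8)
The plan is to exploit the fact that $(L_i)_{i\ge 1}$ are i.i.d.\ and nonnegative, so that partial sums obey a strong law, and to choose $z_L(m)$ growing slowly enough that the typical value of $\sum_{i=1}^{z_L(m)} L_i$ stays below $a_m$. The one subtlety is that $L$ may have infinite mean (indeed the interesting case is exactly when the edge-weight distribution is heavy near $0$ but not necessarily integrable at $\infty$), so I cannot blindly invoke the classical SLLN. Instead I would first truncate: for a threshold $t>0$ set $L_i^{(t)} := L_i \wedge t$, which has finite mean $\mu(t) := \E[L\wedge t] < \infty$. Then $\sum_{i=1}^{N} L_i \le \sum_{i=1}^N L_i^{(t)} + \sum_{i=1}^N (L_i - t)^+$, and on the event that no $L_i$ exceeds $t$ the second sum vanishes. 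If I let $t = t(m)\to\infty$ slowly, then $\Pv(\exists i\le N: L_i > t(m)) \le N(1-F_L(t(m)))$, which I can force to $0$ by taking $N$ small relative to $t(m)$.

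Concretely, I would proceed as follows. First, fix any sequence $t(m)\to\infty$ and note $\mu(t(m)) = \E[L\wedge t(m)] \to \E[L] \le \infty$ is finite for each fixed $m$; in all cases $\mu(t(m)) < \infty$. Second, choose $z_L(m)$ to grow slowly enough that simultaneously: (i) $z_L(m)\,\mu(t(m)) \le \tfrac12 a_m$, which is possible because for each $m$ the left side is a finite quantity and $a_m\to\infty$, so one can always pick $z_L(m)$ large but with $z_L(m) \le a_m/(2\mu(t(m)))$ — and this bound still tends to infinity provided $t(m)$ grows slowly enough that $\mu(t(m)) = o(a_m)$, e.g.\ take $t(m)$ so slow that $\mu(t(m)) \le \sqrt{a_m}$; (ii) $z_L(m)\,(1-F_L(t(m))) \to 0$, again achievable by shrinking $z_L(m)$ further if needed since $1-F_L(t(m))\to 0$. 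Formally one sets $z_L(m) := \min\{\,\lceil \sqrt{a_m}\rceil,\ \fl{a_m/(2\mu(t(m)))},\ \fl{1/\sqrt{1-F_L(t(m))}}\,\}$ with, say, $t(m) := F_L^{(-1)}(1 - 1/m)$; one checks each of the three terms tends to $\infty$, hence so does $z_L(m)$. Third, with these choices, on the event $\cA_m := \{\forall i \le z_L(m): L_i \le t(m)\}$ we have $\sum_{i=1}^{z_L(m)} L_i = \sum_{i=1}^{z_L(m)} L_i^{(t(m))}$, and by Markov's inequality $\Pv\big(\sum_{i=1}^{z_L(m)} L_i^{(t(m))} > a_m\big) \le z_L(m)\mu(t(m))/a_m \le 1/2$ — which is not good enough, so instead I would use that $z_L(m)\mu(t(m)) = o(a_m)$ (choosing $t(m)$ slow enough, $\mu(t(m)) = o(\sqrt{a_m})$ say, so the ratio $\to 0$) together with Markov to get $\Pv\big(\sum L_i^{(t(m))} > a_m \big) \le z_L(m)\mu(t(m))/a_m \to 0$. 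Combining, $\Pv\big(\sum_{i=1}^{z_L(m)} L_i > a_m\big) \le \Pv(\cA_m^c) + \Pv\big(\sum L_i^{(t(m))} > a_m\big) \le z_L(m)(1-F_L(t(m))) + z_L(m)\mu(t(m))/a_m \to 0$, which is the claim.

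The main obstacle is purely bookkeeping: arranging a single sequence $z_L(m)$ that simultaneously grows to infinity and is small enough in three competing senses (against $a_m$, against the truncated mean $\mu(t(m))$, and against the tail $1-F_L(t(m))$), while picking the truncation level $t(m)\to\infty$ slowly enough that $\mu(t(m)) = o(a_m)$ and fast enough (in the weak sense it needs nothing) that $z_L(m)\to\infty$ survives. Since $a_m\to\infty$ is the only hypothesis on $a_m$, and $\mu(t)\uparrow$, $F_L(t)\uparrow 1$ are the only monotonicity facts needed, a diagonal/min-of-three construction as above always works; there is no case distinction between finite and infinite mean because the truncation handles both uniformly. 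Finally, Lemma~\ref{lem:upperboundexpl} follows by applying the claim with $a_m := \ve_1 \sum_{i=1}^{\lfloor \log\log m/|\log(\tau-2)|\rfloor} F_L^{(-1)}(\mathrm{e}^{-(1/(\tau-2))^i})$ (which tends to infinity since the series \eqref{eq:weigthdist} diverges) and reindexing $m \leftrightarrow n$, setting $M_n := z_L(n)$.
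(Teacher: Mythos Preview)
Your argument is correct and is essentially the same as the paper's: truncate at a level $t(m)\to\infty$, control the event $\{\exists i\le z_L(m): L_i>t(m)\}$ by a union bound, and control the truncated sum by Markov's inequality. The paper splits into the finite-mean case (Markov directly) and the infinite-mean case (truncation first), whereas you handle both at once via truncation; this is a mild streamlining but not a different idea.

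One concrete slip: your explicit choice $t(m):=F_L^{(-1)}(1-1/m)$ does not couple $t(m)$ to $a_m$, so when $\E[L]=\infty$ and $a_m$ grows slowly (say $a_m=\log m$) the term $\lfloor a_m/(2\mu(t(m)))\rfloor$ in your minimum need not tend to infinity, and the Markov bound $z_L(m)\mu(t(m))/a_m$ need not vanish. You already identified the right constraint earlier in your write-up (``take $t(m)$ so slow that $\mu(t(m))=o(\sqrt{a_m})$''); simply make that the definition, e.g.\ $t(m):=\sup\{t:\mu(t)\le a_m^{1/3}\}$ (with $t(m)$ any sequence $\to\infty$ once $\E[L]\le a_m^{1/3}$). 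With this choice all three terms in your minimum diverge and both error probabilities tend to $0$, exactly as you claim.
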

\begin{proof}
 We distinguish two cases, based on the tail behavior of $L$. When $F_L$ does not satisfy any of these cases, $L$ can be stochastically dominated by a random variable that does satisfy (at least) one of these cases and then the result follows by a simple stochastic domination argument.  
 \emph{Case (1)}: $\Ev[L]<\infty$. In this case, for some $\delta \in (0,1)$, let
		\be \label{eq:Mn1} z_L(m) := \max_{z\in \bN} \{ z \leq a_m^\delta \} \ee	
	Clearly  $z_L(m)$ tends to infinity with $m$ when $a_m$ does.  Markov's inequality implies that 	
	\be\label{eq:Markov-111} \bP\left(\sum_{i = 0}^{z_L(m)}L_i \geq   a_m\right) \leq \frac{z_L(m) \bE[L]}{ a_m} \to 0. \ee 
\noindent\emph{Case (2):}	 $\Ev[L]=\infty$ and additionally $\bP(L_i > x) \leq 1/g(x)$ for some  non-decreasing  function $g(x)$. 
For some small $\delta, \ve_2>0$, we  define $z_L(m)$ implicitly by
	\be \label{eq:Mn2}  z_L(m)g^{(-1)}(\left(z_L(m)\right)^{1+\ve_2}) =  a_m^{1-\delta}, \ee
	where $g^{(-1)}(x)=\inf\{y\in \R: g(y)\ge x\}$.
	 Since $g(x)$ is non-decreasing and $a_m$ tends to infinity, $z_L(m)$ tends to infinity as well. Note that when $g(x)\ge x^a$ for some $a\in(0,1)$, capturing regularly varying cases, a lower bound on \eqref{eq:Mn2} can be explicitly calculated: 
	 \[ z_L(m)\ge a_m^{(1+(1+\ve_2)/a)^{-1}(1-\delta)}
	 \]
	 To estimate the lhs of \eqref{eq:Markov-111} in this case, we use a truncation argument.
	 We condition \eqref{eq:pathlength} on the maximum of the $L_i$ being larger than $T_m:= g^{\sss{(-1)}}\left(\left(z_L(m)\right)^{1+\ve_2}\right)$ or not, which gives us the following upper bound
	\be \label{eq:Mn2cond} \bP\left(\sum_{i = 0}^{z_L(m)}L_i \geq a_m\right) \leq \bP\left(\exists i \leq z_L(m): L_i > T_m\right) + \bP\left(\sum_{i = 0}^{z_L(m)}L_i\ind_{\{L_i \leq T_m\}}  \geq a_m\right). \ee
	First we focus on the first term in \eqref{eq:Mn2cond}. Using that $\Pv(L>x)=1/g(x)$ and the value $T_m$,
	\be \label{eq:Mn2first} \bP\left(\exists i \leq z_L(m): L_i > T_m\right) \leq z_L(m) \bP\left(L_i \geq T_m \right) \leq  \frac{z_L(m)}{(z_L(m))^{1+\ve_2}} = \left(z_L(m)\right)^{- \ve_2}, \ee
	that tends to zero as $m$ tends to infinity. Next we investigate the second term in \eqref{eq:Mn2cond} which we bound with Markov's inequality,
	\be \label{eq:Mn2markov} \bP\left(\sum_{i = 0}^{z_L(m)}L_i\ind_{\{L_i \leq T_m\}}  \geq a_m\right) \leq  \frac{z_L(m)\bE [L_i\ind_{\{L_i \leq T_m\}}] }{a_m} \ee
Now we  observe that $\bE [L_i \ind_{\{L_i \leq T_m\}}]\le T_m$,
	and use this bound on the rhs of \eqref{eq:Mn2markov}, and \eqref{eq:Mn2},
	\be
	 \label{eq:Mn2secondcase} \frac{z_L(m)\bE [L_i\ind_{\{L_i \leq T_m\}}] }{a_m} \leq \frac{z_L(m)T_m}{a_m} \le \frac{a_m^{1-\delta}}{a_m} \le a_m^{-\delta}
	\ee
Combining \eqref{eq:Mn2first} and \eqref{eq:Mn2secondcase} implies that \eqref{eq:Mn2cond} tends to zero as $m$ tends to infinity. This finishes the proof. \end{proof}

\section{Degree-dependent percolation on the configuration model}\label{s:perc}
In this section we make the degree dependent-percolation precise, that we have described in Section \ref{s:proof-overview-upper}.  Percolation for the configuration model was studied in \cite{Jan09} and later adjusted for the degree-dependent version in \cite{BarHofKom16}. 
We start by giving the definition of an induced subgraph.
\begin{definition}[Induced subgraph]
	Let $S$ be a set of vertices. The induced subgraph of G on vertex set S is the largest subgraph of G with edges that have both endpoints in S. We denote the induced graph of a graph $G$ restricted to the vertices in a set $S$ by $G_{|S}$.
\end{definition}
 Let $ p(d):\bN \rightarrow [0,1]$
be a monotone decreasing function of $d$. For a half-edge $s$ we write the percolation probability shortly as $p_s := p(d_{v(s)})$ with $v(s)$ the vertex that $s$ is attached to and $d_{v(s)}$ the degree of vertex $v(s)$. Now we define two different ways to percolate the configuration model, after that we show equality in distribution for the two different percolated graphs.

\begin{definition}[Edge percolation] \label{def:perc2}
	Consider a configuration model $\CMD$ with half-edges already paired into edges. Delete any edge between vertices with degrees $d, d'$ in the graph independently of all other edges with probability $p(d) p(d')$. We denote the resulting graph by $\widetilde{\mathrm{CM}}_n^{p(d)}(\boldsymbol{d})$.
\end{definition}
As described in Section \ref{s:proof-overview-upper}, we can realize the egde percolation on $\CMDL$ by using the i.i.d.\ edge-lengths in $(L_e)_e$ as auxiliary variables to determine which edge to keep. Then, the threshold function $\xi(d, d')$ as in \eqref{eq:threshold} must satisfy $\Pv(L\le \xi(d, d'))=p(d)p(d')$ for all $d,d'\in \N$.

\begin{definition}[Half-edge percolation] \label{def:perc1}
	Given a degree sequence $\boldsymbol{d} = (d_1, \ldots ,d_n)$ and a half-edge $s$, we keep a half-edge with probability $p_s$ independently. If we do not keep it, then we create a new vertex with one half-edge corresponding to the deleted half-edge. We call the newly created vertex and half-edge artificial. We denote the total number of artificial vertices by $A$. After this procedure is carried out for all $s \in [\cH_n]$ we pair all the half-edges uniformly at random, (including the artificial ones as well). At last we take the induced subgraph on the $n$ original vertices. We denote the resulting graph by $\CMDP$.\end{definition}
	By denoting the number of half-edges that are kept at vertex $i$ by $d_i^r$, and $1(A)$ a sequence with $A$ repetitions of the value $1$, $\CMDP$ is nothing but $CM_{n+A}(d^r, 1(A))_{|[n]}$, i.e., the induced subgraph of the first $n$ vertices of a configuration model with $n+A$ vertices, and degree sequence that is $d_i^r$ for $i\le n$ and $1$ for $i\ge n$.

A result in \cite{BarHofKom16} is the following: 

\begin{corollary}[Equality in distribution of two percolated graphs]
	\label{cor:equalitydist}
	Consider a function $p(d)$, the degree-dependent percolation $\widetilde{\mathrm{CM}}_n^{p(d)}(\boldsymbol{d})$ as in Definition \ref{def:perc2}.  Then $\CMDP \equalsd \widetilde{\mathrm{CM}}_n^{p(d)}(\boldsymbol{d})$, where  $\CMDP$ is the half-edge percolation as described in Definition \ref{def:perc1}. \end{corollary}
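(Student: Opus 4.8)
The plan is to exhibit an explicit coupling of the two percolation procedures on a common probability space and to check that under this coupling the two random graphs coincide; equality in distribution then follows (in fact, almost sure equality). The two ingredients of the coupling are a uniformly random perfect matching $M$ of the half-edge set $\cH_n$ of the $n$ original vertices, together with an independent family $(I_s)_{s\in\cH_n}$ of Bernoulli variables with $\bP(I_s=1)=p_s=p(d_{v(s)})$.

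First I would realise $\widetilde{\mathrm{CM}}_n^{p(d)}(\boldsymbol{d})$ from $(M,(I_s))$. The matching $M$ yields an instance of $\CMD$, and I declare the edge $\{s,s'\}\in M$ (between vertices of degrees $d,d'$) to be retained if and only if $I_sI_{s'}=1$. Since distinct matched pairs use disjoint half-edges, these retention events are independent over edges, and each has probability $p_sp_{s'}=p(d)p(d')$, so this is exactly the edge percolation of Definition \ref{def:perc2}. No special care is needed for loops or multiple edges: the retention rule and the independence are edge-by-edge, and simplicity is never invoked.

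Next I would feed the \emph{same} pair $(M,(I_s))$ into the half-edge construction of Definition \ref{def:perc1}. The variables $(I_s)$ say which half-edges are kept, and the $A:=|\{s:I_s=0\}|$ non-kept ones are reattached to fresh single-half-edge artificial vertices. The observation that makes everything work is that this reattachment alters only the map $s\mapsto v(s)$ from half-edges to vertices; the set of half-edges to be paired is still $\cH_n$, in particular its cardinality, hence its parity, is unchanged. Since a uniform perfect matching of a finite set depends only on the set, I may use the same $M$ as the pairing in Definition \ref{def:perc1}. Passing to the induced subgraph on $[n]$ then deletes precisely the edges $\{s,s'\}\in M$ with at least one endpoint artificial, i.e.\ with $I_s=0$ or $I_{s'}=0$; thus an edge of $M$ survives if and only if $I_s=I_{s'}=1$.

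Consequently both $\widetilde{\mathrm{CM}}_n^{p(d)}(\boldsymbol{d})$ and $\CMDP$ have vertex set $[n]$ and the identical edge multiset $\{\{v(s),v(s')\}:\{s,s'\}\in M,\ I_s=I_{s'}=1\}$, so they are equal almost surely under the coupling, and in particular $\CMDP\equalsd\widetilde{\mathrm{CM}}_n^{p(d)}(\boldsymbol{d})$. The only genuinely delicate point is the bookkeeping around the artificial vertices: one must verify that introducing them does not enlarge the half-edge set (so that ``the uniform pairing of all half-edges'' is still a uniform pairing of $\cH_n$) and that restricting to the induced subgraph removes exactly the edges carrying a non-kept half-edge. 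Once that is pinned down the identity of the two graphs is immediate. This is the degree-dependent refinement, carried out in \cite{BarHofKom16}, of Janson's construction in \cite{Jan09}.
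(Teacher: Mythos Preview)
Your coupling argument is correct and is precisely the standard proof: realise both constructions from the same uniform matching $M$ on $\cH_n$ together with independent Bernoulli marks $(I_s)_{s\in\cH_n}$, and observe that in each case an edge $\{s,s'\}\in M$ survives on $[n]$ if and only if $I_s=I_{s'}=1$. The one point worth watching is the wording of Definition~\ref{def:perc2}, which literally says to \emph{delete} each edge with probability $p(d)p(d')$; the rest of the paper (see e.g.\ the discussion around \eqref{eq:threshold}) and your argument use the convention that edges are \emph{kept} with that probability, which is clearly the intended reading.

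As for comparison with the paper: there is nothing to compare, since the paper does not supply a proof of this corollary at all. It simply cites the result from \cite{BarHofKom16}, which in turn adapts Janson's original construction \cite{Jan09} to the degree-dependent setting. Your write-up therefore goes beyond what the paper provides, and the argument you give is essentially the one found in those references.
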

	The message of Corollary \ref{cor:equalitydist} is that we can understand the (connectivity) properties of the graph after the degree-dependent edge percolation $\widetilde{\mathrm{CM}}_n^{p(d)}(\boldsymbol{d})$ by studying a configuration model $\mathrm{CM}_{n+A}(d^r, 1(A))$ restricted to the first $n$ vertices. In some sense this corollary enables to change the order of percolation and pairing. So,  now on we focus on studying the properties of $\CMDP=\mathrm{CM}_{n+A}(d^r, 1(A))|_{[n]}$.
Importantly,  we need to control the new degree sequence in $\CMDP$. Recall that the vector $\boldsymbol{d^r} := \{d_1^r, \ldots ,d_n^r\}$ denotes the number of kept half edges attached to vertices in $[n]$ in Defintion \ref{def:perc1}.  Let us write $F_n^r(x):=\frac1n\sum_{i=1}^n\ind_{\{d_i^r\le x\}}$. The goal is to find conditions on $p(d)$ such that $F_n^r(x)$ still satisfies the conditions of \eqref{eq:Fn}, when $F_n$ did so. 

\begin{lemma}[Empirical degree distribution after percolation]
	\label{lem:percdegrees}
	Consider $\CMD$ with degree sequence satisfying Assumption \ref{ass:degree-dist}. Perform half-edge percolation as described in Definition $\ref{def:perc1}$ on $\CMD$ with percolation function $p(d)$ satisfying 
	\be \label{eq:perccondition} p(d) > b\exp\{-c(\log(d))^\eta\} \ee
	for some constants $b, c > 0$ and $\eta \in (0,1)$. Then there exists a $\theta$ such that for all $x \in [\theta, n^{\al}]$ the empirical degree distribution $F_n^r(x)$ of the degrees after percolation still satisfies Assumption \ref{ass:degree-dist}, except the condition on the minimal degree, with the same $\tau, \al$, but possibly different $\gamma\in (0,1)$.\end{lemma}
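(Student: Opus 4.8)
The plan is to track what happens to a single vertex of degree $d$ under half-edge percolation and then sum over vertices. Fix $d$ and note that the number $d^r$ of retained half-edges at a vertex of degree $d$ is $\mathrm{Bin}(d, p(d))$, independent across vertices. Thus $1-F_n^r(x) = \frac1n\sum_{v\in[n]} \Pv(\mathrm{Bin}(d_v,p(d_v)) > x)$ (where the randomness is the percolation), and I would first establish a deterministic (i.e.\ non-random, using concentration) version: with high probability $1-F_n^r(x)$ is within a $\mathrm{e}^{\pm c'(\log x)^{\gamma'}}$ factor of its expectation $\frac1n\sum_v \Pv(\mathrm{Bin}(d_v,p(d_v))>x)$, since the sum is of independent bounded-by-one terms and the expectation is at least polynomially large in $1/x$ for $x\le n^\alpha$ (by the lower bound in \eqref{eq:Fn} together with \eqref{eq:perccondition}), so Hoeffding/Bernstein gives concentration with the right kind of error. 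The main content is therefore to estimate $\frac1n\sum_v \Pv(\mathrm{Bin}(d_v,p(d_v))>x)$ from above and below by $x^{-(\tau-1)}$ up to $\mathrm{e}^{\pm C'(\log x)^{\gamma'}}$ corrections.

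For the \emph{upper bound}, split the sum over $v$ according to whether $d_v$ is below or above a threshold like $x^{1+\epsilon_0}$ (or, more carefully, below or above $x/p(x)$ up to slowly-varying factors). For $d_v$ small relative to $x$, a Chernoff bound on $\mathrm{Bin}(d_v,p(d_v))>x$ makes the contribution super-polynomially small and hence negligible against $x^{-(\tau-1)}$; here one uses that $p$ is monotone decreasing so $p(d_v)\le p(\theta)$ is bounded, and that $d_vp(d_v)\ll x$ in this range. For $d_v$ large, one simply bounds $\Pv(\mathrm{Bin}(d_v,p(d_v))>x)\le 1$ and counts the number of such $v$ using $1-F_n(y)\le y^{-(\tau-1)}\mathrm{e}^{C(\log y)^\gamma}$ at $y$ equal to the threshold; since the threshold is $x^{1+\epsilon_0}$ up to slowly varying factors coming from \eqref{eq:perccondition}, this produces $x^{-(\tau-1)(1+\epsilon_0)}$ times a slowly varying factor, which is $\le x^{-(\tau-1)}\mathrm{e}^{C'(\log x)^{\gamma'}}$ for a new $\gamma'<1$ once we absorb $x^{-(\tau-1)\epsilon_0}$ — wait, this needs the threshold to be chosen so that the loss of $(\tau-1)\epsilon_0$ in the exponent is compensated; the correct choice is a threshold of the form $x\cdot \mathrm{polylog}$-type correction coming from inverting $d\mapsto d\,p(d)$, which only changes $x$ by a factor $\mathrm{e}^{O((\log x)^{\eta})}$, keeping the exponent of $x$ exactly $\tau-1$ and merging the $(\log x)^\gamma$ and $(\log x)^\eta$ corrections into a single $(\log x)^{\gamma'}$ with $\gamma'=\max\{\gamma,\eta\}<1$.

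For the \emph{lower bound}, it suffices to keep only vertices with $d_v$ somewhat larger than $x$, say $d_v \ge 2x/p(2x/\dots)$ again up to the same slowly varying correction, for which $\mathrm{Bin}(d_v,p(d_v))$ has mean $\ge 2x$ (roughly), so $\Pv(\mathrm{Bin}(d_v,p(d_v))>x)\ge \tfrac12$ by a second-moment or Chernoff-type lower bound on the binomial. Then $1-F_n^r(x)\ge \tfrac12\big(1-F_n(\text{threshold})\big)\ge \tfrac12\,\text{threshold}^{-(\tau-1)}\mathrm{e}^{-C(\log \text{threshold})^\gamma}$, and since the threshold is $x$ times a factor $\mathrm{e}^{O((\log x)^\eta)}$ this is $\ge x^{-(\tau-1)}\mathrm{e}^{-C'(\log x)^{\gamma'}}$ as required. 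The restriction $x\in[\theta,n^\alpha]$ enters to guarantee that $1-F_n$ evaluated at the relevant thresholds still lies in the regime where \eqref{eq:Fn} applies and is polynomially bounded below, which is also what makes the concentration step work.

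The step I expect to be the main obstacle is the bookkeeping of slowly varying corrections: one has to verify that inverting $d\mapsto d\,p(d)$ (to find which original degrees map to retained-degree roughly $x$) introduces only a multiplicative $\mathrm{e}^{O((\log x)^{\eta})}$ distortion of $x$ and never distorts the \emph{exponent} $\tau-1$, and that all the Chernoff error terms for the ``wrong-size'' vertices are genuinely super-polynomially small uniformly over $x\le n^\alpha$. The monotonicity and the precise form $p(d)>b\,\mathrm{e}^{-c(\log d)^\eta}$ in \eqref{eq:perccondition} are exactly what is needed for both: monotonicity gives a clean inversion, and the sub-polynomial decay of $p$ guarantees $d\,p(d)$ is increasing-like and comparable to $d$ up to the permitted slowly varying factor, so that $\gamma'=\max\{\gamma,\eta\}<1$ and Assumption~\ref{ass:degree-dist} (minus the minimum-degree clause) is recovered.
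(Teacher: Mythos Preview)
Your proposal is correct in outline and the lower bound is essentially the paper's argument: pick a threshold $s(x)\approx x\,\mathrm{e}^{O((\log x)^\eta)}$ so that $d_v\ge s(x)$ forces $d_v p(d_v)\ge 2x$, use binomial concentration to get $\Pv(\mathrm{Bin}(d_v,p(d_v))>x)$ bounded below by a constant on this set, and then read off $1-F_n(s(x))$ with the new $\gamma'=\max\{\gamma,\eta\}$. The paper combines the per-vertex concentration and the concentration of the empirical count into one step (a union bound over $x\in[\theta,n^\alpha]$ with error $\mathrm{e}^{-|S(x)|/8}$), rather than separating ``concentration around the mean'' from ``estimate the mean'' as you do, but this is cosmetic.

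The one place you work much harder than necessary is the upper bound. You split the sum at a threshold, invoke Chernoff for the small-$d_v$ range, and then worry about inverting $d\mapsto d\,p(d)$ to keep the exponent exactly $\tau-1$. The paper dispatches this in one line: since $d_v^r\le d_v$ \emph{deterministically}, one has $\ind_{\{d_v^r>x\}}\le\ind_{\{d_v>x\}}$ pointwise, hence $1-F_n^r(x)\le 1-F_n(x)$ almost surely, and the upper half of Assumption~\ref{ass:degree-dist} is inherited with the \emph{same} $\gamma$ and no concentration argument at all. Your route still lands, but the monotonicity shortcut is worth noting.
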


\begin{proof}
By Definition \ref{def:perc1}, half-edges are kept independently, and thus, given $d_i$,  $d_i^r\  {\buildrel d\over=}\  \mathrm{Bin}(d_i, p(d_i))$, where $\mathrm{Bin}(n, p)$ is a binomial random variable with parameters $n$ and $p$. As a result the random variables $(d_i^r)_{i\leq n}$ are independent given the initial degrees $(d_1, \ldots, d_n)$. The upper bound in Assumption \ref{ass:degree-dist} for $F_n^r$ is elementary since 
	\be 1 - F_n^r(x) = \frac{1}{n}\sum_{i=1}^n \ind_{\left\{\mathrm{Bin}(d_i, p(d_i)) > x\right\}} \leq \frac{1}{n}\sum_{i=1}^n \ind_{\left\{d_i > x\right\}} = 1 - F_n(x). \ee
	 Next we show the lower bound. First we define for all $x < n^\alpha$ 
	\be S(x) := \left\{v : d_v \geq s(x) \right\} \ee
	where $s(x)>x$ is a function of $x$ that is defined later. Clearly, 
	\be 1 - F_n^r(x) \geq \frac{1}{n}\sum_{i \in S(x)} \ind_{\left\{\mathrm{Bin}(d_i, p(d_i)) > x\right\}} \ee
	We choose the value of $s(x)$ such that the probability that the indicators within the sum are 1 with high enough probability, for all $i\in S(x)$. Namely, if  we choose $y(x)$ such that the expectation of the binomial, $d_ip(d_i)$, is higher than $2x$ for all vertices in $S(x)$, then we can use the concentration of binomial random variables \cite[Theorem 2.21]{H10} to get an upper bound on the probability that the indicator functions are 1. Let 
	\be \label{eq:y(x)} s(x) = \frac{2x}{b}\mathrm{e}^{2c (\log(2x/b))^\eta} \ee then we find that
	\begin{align*}
	s(x)p(s(x)) & \geq 2x\mathrm{e}^{2c(\log (2x/b))^\eta}\mathrm{e}^{-c(\log(2x/b) + {2c(\log (2x/b))^\eta}))^\eta} \\
	&= 2x\mathrm{e}^{2c(\log (2x/b))^\eta}\mathrm{e}^{-c\left(\log(2x/b) (1 + 2c(\log (2x/b))^{\eta-1})\right)^\eta}.
	\end{align*}
	Since $\eta<1$, the factor $1 + 2c(\log (2x/b))^{\eta-1}$ in the exponent of the last factor is at most $3/2$ whenever  $x \geq \frac{b}{2}\exp\left\{(4c)^{1-\eta}\right\} := \frac{b}{2}\widehat{\theta}$. Using this fact and  the monotonicity of $dp(d)$, we find that for all $d_i > s(x)$,
	\be d_ip(d_i) \geq s(x)p(s(x)) \geq 2x e^{c/2(\log 2x/b)^\eta} \geq 2x. \ee
	Then, for all $d_i > s(x)$, by \cite[Theorem 2.21]{H10},
	\be\ba\label{eq:bin-conc} \bP\left(\mathrm{Bin}(d_i, p(d_i)) > x \right) &\leq \bP\left(\mathrm{Bin}(d_i, p(d_i) > \frac{s(x)p(s(x))}{2} \right) \\
	&\leq e^{-s(x)p(s(x))/8} \leq e^{-x/4} < \frac{1}{8}, \ea\ee
	whenever $x > 4\log8$. Using this we get for all $x \geq \max\left\{\frac{b}{2}\widehat{\theta}, 4\log8\right\} := \theta$
	\begin{align*}
	&\bP \left( n(1-F_n^r(x))\leq \frac{S(x)}{4}\right) \leq 
	\bP \left( \sum_{i \in S(x)} \ind_{\left\{\mathrm{Bin}(d_i, p(d_i)) > x\right\}} \leq  \frac{S(x)}{4} \right) \\
	\;\;\;&\leq \bP\left(\text{Bin}(|S(x)|, 7/8) \leq \frac{|S(x)|}{4}\right) \leq \mathrm{e}^{-|S(x)|/8}.
	\end{align*}
	Combining this estimate with a union bound,
	\be \label{eq:setunion} \bP\left(\exists x \in [\theta, n^\alpha]: n(1-F_n^r(x)) \leq \frac{|S(x)|}{4}\right) \leq \sum_{x=\theta}^{n^\alpha} e^{-|S(x)|/8} \leq n^\alpha e^{-|S(n^\alpha)|/8}, \ee
	since $|S(x)|$ decreases as $x$ increases. Using \eqref{eq:Fn} $|S(x)|$ can be bounded from below as follows
	\begin{equation*}
	 	\label{eq:setsize} |S(x)| = n(1-F_n(s(x))) \geq n \frac{1}{s(x)^{\tau-1}}\mathrm{e}^{-c(\log s(x))^\eta}.
	\end{equation*}
	It is elementary to calculate that, with  $s(x)$ as in \eqref{eq:y(x)}, the rhs satisfies satisfies the lower bound in Assumption \ref{ass:degree-dist}, with the same $\tau, \al$, while the new value of $\gamma$ is $\max\{\gamma^{\text{old}}, \eta\}$.
	Using this bound for $|S(x)|$ within the probability sign in \eqref{eq:setunion} and for $|S(n^\alpha)|$ on the rhs of \eqref{eq:setunion} we arrive at:
	\be \bP\left(\exists x \in [\theta, n^\alpha]: 1-F_n^r(x) \leq \frac{1}{s(x)^{\tau-1}}\mathrm{e}^{-c(\log s(x))^\eta}\right) \leq \mathrm{e}^{\alpha \log n} \mathrm{e}^{-n^\ve \mathrm{e}^{-c(\log n)^\eta}} \xrightarrow{n \rightarrow \infty} 0. \ee
	
\end{proof}
Next we prepare more for the proof of the upper bound of Theorem \ref{thm:mainthm}, by comparing  the degree of a fixed vertex before and after the half-edge percolation. This will be used to ensure that $u_{K_n}, v_{K_n}$ in Section \ref{s:proof-overview-upper} still has sufficiently high degree in the percolated subgraph. 
\begin{lemma}[Degree after percolation vs original degree]
	\label{lem:degreeafterperc}
 Apply  half-edge percolation as described in Definition \ref{def:perc1} with percolation function $p(d)$  satisfying \eqref{eq:perccondition} on $\CMD$. Let  $\widetilde{K_n} = O(\log n)$ an arbitrary sequence that tends to infinity with $n$. We define
	\be \label{eq:Kn} K_n := \sup \left\{m : 2m \leq \widetilde{K_n}b\mathrm{e}^{-c(\log \widetilde{K_n})^\eta} \right\}. \ee
	 Then a vertex $w$ with $d_w\ge \widetilde{K_n}$ in $\CMD$ has degree at least $K_n$ in $\CMDP$ whp.
\end{lemma}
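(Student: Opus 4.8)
The statement is: after half-edge percolation with $p(d)$ satisfying \eqref{eq:perccondition}, a vertex $w$ with $d_w \ge \widetilde{K_n}$ has percolated degree at least $K_n$ whp, where $K_n$ is defined by \eqref{eq:Kn}. The approach is a direct binomial concentration argument, entirely analogous to the one carried out in the proof of Lemma \ref{lem:percdegrees}. First I would recall from the proof of that lemma that, conditionally on the original degrees, the percolated degree $d_w^r$ of any fixed vertex $w$ is distributed as $\mathrm{Bin}(d_w, p(d_w))$, and these are independent across vertices. Since $d \mapsto d\,p(d)$ is monotone increasing (this monotonicity was used in the previous proof via \eqref{eq:perccondition} and the form \eqref{eq:y(x)}), we have for $d_w \ge \widetilde{K_n}$ that the mean percolated degree satisfies $d_w\, p(d_w) \ge \widetilde{K_n}\, p(\widetilde{K_n}) \ge \widetilde{K_n}\, b\, \mathrm{e}^{-c(\log \widetilde{K_n})^\eta} \ge 2K_n$, where the last inequality is exactly the defining property of $K_n$ in \eqref{eq:Kn}.

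**Key steps.** With the mean bounded below by $2K_n$, the second step is a lower-tail Chernoff bound for the binomial: by \cite[Theorem 2.21]{H10} (the same reference used in \eqref{eq:bin-conc}),
\be \bP\left( \mathrm{Bin}(d_w, p(d_w)) \le K_n \right) \le \bP\left( \mathrm{Bin}(d_w, p(d_w)) \le \tfrac12 d_w p(d_w) \right) \le \mathrm{e}^{-d_w p(d_w)/8} \le \mathrm{e}^{-K_n/4}. \ee
Since $\widetilde{K_n} \to \infty$ and $p$ decays only as a stretched exponential in $\log$, one checks from \eqref{eq:Kn} that $K_n \to \infty$ (indeed $K_n = \widetilde{K_n}^{1-o(1)}$), so $\mathrm{e}^{-K_n/4} \to 0$. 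This already gives the claim for a single fixed vertex $w$; since the lemma is stated for "a vertex $w$ with $d_w \ge \widetilde{K_n}$" (i.e. a fixed, designated vertex such as $u_{K_n}$ or $v_{K_n}$), a single application suffices and no union bound is needed. If one wanted the statement simultaneously for all high-degree vertices, a union bound over the at most $n$ such vertices would require $K_n \gg \log n$, which fails; but that is not what is claimed, and the intended use in Section \ref{s:proof-overview-upper} is for the two specific vertices $u_{K_n}, v_{K_n}$, so applying the bound twice and taking a union over those two events is all that is required.

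**Main obstacle.** There is no serious obstacle here; the lemma is essentially a bookkeeping consequence of the binomial concentration already deployed in Lemma \ref{lem:percdegrees}. The only point requiring a small amount of care is verifying that $K_n \to \infty$ from the implicit definition \eqref{eq:Kn} — i.e. that $\widetilde{K_n}\, b\, \mathrm{e}^{-c(\log \widetilde{K_n})^\eta}$ diverges, which holds because $\eta < 1$ forces $(\log \widetilde{K_n})^\eta = o(\log \widetilde{K_n})$, hence the exponential correction is $\widetilde{K_n}^{o(1)}$ and the whole expression is $\widetilde{K_n}^{1-o(1)} \to \infty$. A secondary subtlety is that one must apply the Chernoff bound conditionally on $d_w$ and then note that the resulting bound $\mathrm{e}^{-K_n/4}$ is deterministic (it does not depend on the precise value of $d_w \ge \widetilde{K_n}$, by monotonicity of $d\,p(d)$), so the conditioning can be removed trivially. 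Both points are routine, and the proof is short.
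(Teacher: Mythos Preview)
Your proposal is correct and matches the paper's proof essentially line by line: both argue that $d_w^r \sim \mathrm{Bin}(d_w, p(d_w))$ has mean at least $2K_n$ whenever $d_w \ge \widetilde{K_n}$ (by monotonicity of the lower bound $d\,b\,\mathrm{e}^{-c(\log d)^\eta}$), then apply the same binomial lower-tail bound from \cite[Theorem 2.21]{H10} to get $\bP(d_w^r < K_n) \le \mathrm{e}^{-K_n/4}$, and conclude since $K_n \to \infty$. Your additional remarks on why $K_n \to \infty$ and on why no union bound over all high-degree vertices is needed are accurate and slightly more explicit than the paper, but the argument is the same.
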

\begin{proof}
	First we investigate the expected degree of a vertex after percolation. Consider a vertex $w$ with degree $d_w$, as before $d^r_w$ denotes the degree after the half-edge percolation. Recall that $d^r_w
\ {\buildrel d \over =} \ \mathrm{Bin}(d_w, p(d_w))$. Thus 
	\be \nonumber \label{eq:expectedperddegree} \bE[d_w^r] \geq \bE[ \mathrm{Bin}(d_w, b\mathrm{e}^{-c(\log d_w)^\eta}) ] = d_w b\mathrm{e}^{-c(\log d_w)^\eta} = b\mathrm{e}^{\log d_w(1 -c(\log d_w )^{\eta-1})}. \ee
	The rhs is monotone increasing in $d_w$, and tends to infinity as $d_w \to \infty$. Therefore, by setting  $K_n$ as in \eqref{eq:Kn}, $K_n$ tends to infinity when $\widetilde{K_n}$ does.  By \eqref{eq:Kn},  the expectation of a $\mathrm{Bin}(d_w, p(d_w))$, for any $d_w\ge \wit K_n$, is larger than $2K_n$. Knowing that, we can use the concentration of binomial random variables \cite[Theorem 2.21]{H102} to obtain a bound on the probability that the binomial is smaller than $K_n$, i.e.
	\be \nonumber	\bP\left( \mathrm{Bin}(d_w, p(d_w)) < K_n| d_w\ge \wit K_n\right) \leq \exp \left\{-K_n/4\right\}, \ee
	since $K_n$ tends to infinity this finishes the proof.
\end{proof}
\section{Upper and lower bound on weighted distances}\label{s:proof}
In this section we give the proofs of Theorems \ref{thm:mainthm} and \ref{thm:erasedthm}. We start with the main result as stated in Theorem \ref{thm:mainthm}, after that we give the proof of Theorem \ref{thm:erasedthm}. We start with the lower bound as stated in the following lemma:
\begin{lemma}[Lower bound on the weighted graph distance]
	\label{lem:lowerbound}
	Consider $\CMD$  satisfying Assumptions \ref{ass:degree-dist}-\ref{ass:tv} and let $u$ and $v$ be uniformly chosen from $[n]$. Suppose the edge lengths are i.i.d.\ with distribution function $\FL$ that satisfies \eqref{eq:weigthdist}. Then for all $\ve > 0$
	\be\label{eq:lower-prop} \lim_{n \to \infty} \bP\Bigg(\dl > (1-\ve)2\an\Bigg) = 1, \ee
	and for the hopcount \be\label{eq:lower-hop} \bP\left(\mathrm d_{H}(u,v) > (1-\ve)2 \log \log n / |\log (\tau-2)|\right) = 1.\ee
\end{lemma}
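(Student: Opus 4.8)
The plan is to prove the lower bound in Lemma~\ref{lem:lowerbound} by combining the coupling results of Section~\ref{s:couple} with the topological observation that any $u$--$v$ path must cross the boundary spheres $\Delta B^G_{\kappa_n(u)}(u)$ and $\Delta B^G_{\kappa_n(v)}(v)$. First I would invoke Corollary~\ref{corr:couple} and Claim~\ref{lem:lastgenexpl} to couple $(\CU_{\kappa_n(u)},\CV_{\kappa_n(v)})$ to two independent branching processes, and check that whp these two neighborhoods are \emph{vertex-disjoint}; this disjointness is what makes the crossing argument valid. The key point here is that $|\CU_{\kappa_n(u)}\cup\CV_{\kappa_n(v)}|\le 2n^{\theta(\delta)}$ with $\theta(\delta)<1/2$, so a standard birthday-type argument (the probability that the two explorations collide is $O(n^{2\theta(\delta)}/n)=o(1)$) gives disjointness whp.

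Second, on the event that the neighborhoods are disjoint and the coupling holds, every path $\pi$ from $u$ to $v$ in $\CMDL$ contains a subpath from $u$ to some vertex of $\Delta B^G_{\kappa_n(u)}(u)$ and, disjointly, a subpath from some vertex of $\Delta B^G_{\kappa_n(v)}(v)$ to $v$; hence
\be \nonumber \dl \;\ge\; d_L\big(u,\Delta B^G_{\kappa_n(u)}(u)\big)+d_L\big(v,\Delta B^G_{\kappa_n(v)}(v)\big). \ee
Now I apply Lemma~\ref{lem:lowerboundexpl} to each of the two terms (with $\ve$ replaced by $\ve/2$): each term is whp at least $(1-\ve/2)\an$, so their sum is whp at least $(1-\ve/2)\cdot 2\an \ge (1-\ve)\,2\an$, which is exactly \eqref{eq:lower-prop}. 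One must be slightly careful that Lemma~\ref{lem:lowerboundexpl} is stated for a single vertex and uses the internal randomness of the BP coupled to that vertex; since the two BPs are independent, the two events intersect with probability $(1-o(1))^2=1-o(1)$, so no union-bound loss beyond a constant factor occurs.

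Third, for the hopcount bound \eqref{eq:lower-hop}, the same crossing argument gives $d_H(u,v)\ge \kappa_n(u)+\kappa_n(v)$, since the optimal weighted path must use at least $\kappa_n(q)$ edges to reach $\Delta B^G_{\kappa_n(q)}(q)$ from $q$, for each $q\in\{u,v\}$. By the implicit representation \eqref{eq:kappa_n} in Claim~\ref{lem:lastgenexpl}, $\kappa_n(q)=\log\log n/|\log(\tau-2)|+O_{\mathbb P}(1)$, so $\kappa_n(u)+\kappa_n(v)=2\log\log n/|\log(\tau-2)|\,(1+o_{\mathbb P}(1))$, which is $\ge (1-\ve)\,2\log\log n/|\log(\tau-2)|$ whp. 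Here I would write $\kappa_n(q)$ more carefully: the additive $O_{\mathbb P}(1)$ term is $\log(\theta(\delta)f_n(q)/Y_{\kappa_n(q)}^{\sss(q)})/|\log(\tau-2)|$, and since $Y_{\kappa_n(q)}^{\sss(q)}$ converges in distribution to an a.s.\ positive limit and $f_n(q)\in(\tau-2,1]$ is bounded, this term is tight, so it is indeed negligible against $\log\log n/|\log(\tau-2)|$.

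The main obstacle I anticipate is bookkeeping the union of "bad" events rather than any deep step: one must simultaneously control (i) the coupling failure of both explorations, bounded by Lemma~\ref{lem:coupling-err}/Corollary~\ref{corr:couple} with $s_n\le 2n^{\theta(\delta)}$, (ii) the collision event between the two neighborhoods, (iii) the failure of the edge-length lower bound $E(C_n)$ from \eqref{eq:errprob} in Lemma~\ref{lem:lowerboundexpl} for each of $u,v$, and (iv) the event that $Y_{\kappa_n(q)}^{\sss(q)}$ is anomalously small (which would inflate the additive correction in $\kappa_n(q)$ and shrink the lower summation cutoff $R_n(\ve)$). Each of (i)--(iv) is $o(1)$ by the cited results, and since there are only finitely many of them the sum is $o(1)$; the only genuine care needed is to make sure the choice of $h(n)$, hence $C_n$, in Lemma~\ref{lem:lowerboundexpl} is made \emph{before} conditioning on $\kappa_n(q)$, so that the error bound \eqref{eq:errprob} can be taken uniformly. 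Once these are assembled, \eqref{eq:lower-prop} and \eqref{eq:lower-hop} follow immediately.
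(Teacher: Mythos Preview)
Your proposal is correct and follows essentially the same approach as the paper: couple the two neighborhoods to independent BPs via Corollary~\ref{corr:couple}/Claim~\ref{lem:lastgenexpl}, use disjointness to decompose $\dl\ge d_L(u,\Delta B_{\kappa_n(u)}(u))+d_L(v,\Delta B_{\kappa_n(v)}(v))$ and $d_H(u,v)\ge\kappa_n(u)+\kappa_n(v)$, then apply Lemma~\ref{lem:lowerboundexpl} and the tightness of $Y_{\kappa_n(q)}^{\sss{(q)}}$. The only difference is cosmetic: the paper treats disjointness as a consequence of the successful coupling to two \emph{independent} BPs (a collision would already be a coupling failure in Lemma~\ref{lem:coupling-err}), so your separate birthday argument is redundant but harmless.
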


\begin{proof}
	We consider two uniformly chosen vertices $u$ and $v$. We do a BFS-exploration on both sides and by Lemma \ref{lem:lastgenexpl}, we can couple these explorations whp to two independent BPs until generation $\kappa_n(u), \kappa_n(v)$ respectively. We write $\Delta B_{\kappa_n(q)}(q)$ for the set of vertices distance  $\kappa_n(q)$ from vertex $q \in\{ u,v\}$, respectively. By the coupling, these explorations are disjoint whp. Since any path connecting $u,v$ must intersect $ \Delta B_{\kappa_n(u) }(u),  \Delta B_{\kappa_n(v) }(v)$, we have the following lower bounds on the weighted distance and the hopcount between $u,v$: 	\be\ba \label{eq:wd-lowerb} \dl &\geq d_L(u, \Delta B_{\kappa_n(u) }(u)) + d_L(v, \Delta B_{\kappa_n(v)}(v)),\\
	 \mathrm d_{H}(u,v) &\ge \kappa_n(u) + \kappa_n(v).\ea\ee
	Then, \eqref{eq:lower-prop} directly follows from the first inequality and  Lemma \ref{lem:lowerboundexpl}. 
	By \eqref{eq:wd-lowerb}, the result of the lemma follows by a union bound. For the hopcount, the second inequality combined with Lemma \ref{lem:lastgenexpl} yields \eqref{eq:lower-hop}, since $Y_{\kappa_n(q)}^{\sss{(q)}}$ converges in distribution.
\end{proof}
For the proof of the  upper bound  we use a proposition, similar to \cite[Proposition 2.1]{BarHofKom16}, which gives an upper bound on the path length between two vertices of a fixed degree of at least $K$. In our setting the vertices have a degree of at least $K_n$ with $K_n$ tending to infinity with $n$. We provide the adjusted proof since the adjustments are non-trivial.

\begin{proposition} \label{prop:pathlength}
	Consider $\CMD$ satisfying \eqref{eq:Fn} for all $x \in [\theta, n^{\al}]$ for some given $\theta \in \bR$ and some $\al>1/2$. Let $u_{K_n}$ be a vertex with degree at least $K_n$. Then, whp, there exists a path from $u_{K_n}$ to a vertex $u^\star$ with degree at least $n^{(\tau-2) \al}$ such that the degree $y_i(K_n)$ of the $i$th vertex on the path satisfies
	\be\label{eq:yikn} y_i(K_n) \geq \left(K_n^{1-\delta_n}\right)^{\left(\frac{1}{\tau - 2}\right)^i}, \ee
	with $\delta_n\to 0$ as $K_n\to \infty$. Whp, $i_{\max}$, the length of this path is at most
	\be i_{\max}\le \frac{\log \log n}{|\log(\tau-2)|} - \frac{\log \log K_n}{|\log(\tau-2)|}.\ee

\end{proposition}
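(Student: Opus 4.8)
The plan is to build the path layer by layer, following the strategy of \cite[Proposition 2.1]{BarHofKom16} but keeping careful track of the fact that the starting degree $K_n$ now grows with $n$. Define the target degree thresholds $y_i(K_n)$ by the recursion suggested by \eqref{eq:yikn}, i.e. $y_0(K_n):=K_n^{1-\delta_n}$ and $y_{i+1}(K_n):=y_i(K_n)^{1/(\tau-2)}$, so that $y_i(K_n)=\big(K_n^{1-\delta_n}\big)^{(\tau-2)^{-i}}$; the exponent $1-\delta_n$ is a small correction that will absorb the polynomial losses incurred at each step, and one must check at the end that it can be chosen so that $\delta_n\to 0$ while the path reaches degree $n^{(\tau-2)\al}$ in the claimed number of steps. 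Set $\Gamma_i:=\{w\in[n]:d_w\ge y_i(K_n)\}$. By Assumption \ref{ass:degree-dist} (in the form \eqref{eq:Fn}), valid on $[\theta,n^\al]$, we have $|\Gamma_i|=n(1-F_n(y_i(K_n)))\ge n\, y_i(K_n)^{-(\tau-1)}\mathrm{e}^{-C(\log y_i(K_n))^\gamma}$, so each layer has a controlled, polynomially large cardinality as long as $y_i(K_n)\le n^\al$, which defines the stopping index $i_{\max}$.

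The core step is to show that a given vertex $w\in\Gamma_i$ of degree $d_w\ge y_i(K_n)$ has at least one neighbor in $\Gamma_{i+1}$, whp, and with an error probability small enough that the union bound over all $i\le i_{\max}$ still tends to zero as $K_n\to\infty$. For this I would use the standard pairing argument for the configuration model: condition on the partial pairing revealed so far, and pair the $\approx d_w$ free half-edges of $w$ one at a time. The number of half-edges incident to $\Gamma_{i+1}$ is at least $y_{i+1}(K_n)|\Gamma_{i+1}|$, and the total number of half-edges is $\cH_n=\Theta(n)$ (since $\tau>2$ gives finite mean degree, and Assumption \ref{ass:tv} pins down $\E[D_n]$). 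Hence each pairing of a half-edge of $w$ hits $\Gamma_{i+1}$ with probability at least of order $y_{i+1}(K_n)|\Gamma_{i+1}|/\cH_n$, and the probability that none of the $d_w\ge y_i(K_n)$ attempts succeeds is at most
\be \Big(1-c\,\frac{y_{i+1}(K_n)|\Gamma_{i+1}|}{n}\Big)^{y_i(K_n)} \le \exp\Big\{-c\,\frac{y_i(K_n)\,y_{i+1}(K_n)|\Gamma_{i+1}|}{n}\Big\}. \ee
Plugging in the lower bound on $|\Gamma_{i+1}|$ from \eqref{eq:Fn} and the recursion $y_{i+1}=y_i^{1/(\tau-2)}$, the exponent in the display is, up to slowly varying factors, of order $y_i(K_n)^{2+1/(\tau-2)}\,y_{i+1}(K_n)^{-(\tau-1)} = y_i(K_n)^{2+1/(\tau-2)-(\tau-1)/(\tau-2)}=y_i(K_n)^{(3-\tau)/(\tau-2)+1}$, which is a positive power of $y_i(K_n)\ge K_n^{(1-\delta_n)}\to\infty$; since $y_i$ is increasing in $i$, the worst term is $i=0$, giving total error $\le i_{\max}\exp\{-K_n^{c'}\}\to 0$ because $i_{\max}=O(\log\log n)$ and $K_n$ is a growing power. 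One also has to subtract a bounded number of already-used half-edges at $w$ (those revealed while building the path up to $w$), but that only changes $d_w$ by $O(i_{\max})$ and is harmless. Iterating this from $u_{K_n}$ produces the desired path, with $i$th vertex in $\Gamma_i$, hence of degree at least $y_i(K_n)$, establishing \eqref{eq:yikn}.

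Finally, the bound on $i_{\max}$: the construction stops once $y_i(K_n)$ first exceeds $n^{(\tau-2)\al}$ (or $n^\al$, chosen to keep the last vertex $u^\star$ of degree at least $n^{(\tau-2)\al}$ while staying in the validity range of \eqref{eq:Fn}). Solving $\big(K_n^{1-\delta_n}\big)^{(\tau-2)^{-i}} = n^{(\tau-2)\al}$ gives $(\tau-2)^{-i}=\log(n^{(\tau-2)\al})/\big((1-\delta_n)\log K_n\big)$, hence $i=\big(\log\log n - \log\log K_n + O(1)\big)/|\log(\tau-2)|$, matching the claimed $i_{\max}\le \log\log n/|\log(\tau-2)| - \log\log K_n/|\log(\tau-2)|$ once the $O(1)$ and the $\delta_n$-correction are absorbed (this is where one checks $\delta_n\to0$ is compatible with the rounding to integer steps). \textbf{The main obstacle} I anticipate is the bookkeeping of the error probabilities and the slowly varying corrections $\mathrm{e}^{\pm C(\log x)^\gamma}$: one must verify that the per-layer failure probability is not merely $o(1)$ in $n$ but summable over the $\approx\log\log n$ layers \emph{uniformly}, and that the degradation of the exponent from $1$ to $1-\delta_n$ stays controlled through all $i_{\max}$ iterations — essentially a discrete Gr\"onwall-type estimate on the accumulated multiplicative losses. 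The branching-to-percolation interface (ensuring $u_{K_n}$ really does have degree $\ge K_n$ after the degree-dependent percolation) is handled by Lemma \ref{lem:degreeafterperc} and so does not re-enter here.
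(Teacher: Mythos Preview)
Your overall architecture (layers $\Gamma_i$, pairing argument, union bound over $i$) is exactly right and matches the paper. However, there is a genuine gap at the heart of your error estimate.

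With the naive recursion $y_{i+1}=y_i^{1/(\tau-2)}$, the exponent in the failure probability is \emph{not} a positive power of $y_i$. Writing it out,
\[
\frac{y_i\,y_{i+1}\,|\Gamma_{i+1}|}{n}\;\ge\;\tilde c\, y_i\, y_{i+1}^{2-\tau}\,\mathrm{e}^{-C(\log y_{i+1})^\gamma},
\]
and $y_i\,y_{i+1}^{2-\tau}=y_i^{\,1+(2-\tau)/(\tau-2)}=y_i^{\,0}=1$, so the main term is of order~$1$; worse, the remaining factor $\mathrm{e}^{-C(\log y_{i+1})^\gamma}$ \emph{tends to zero} as $i$ grows. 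Your algebra line ``$=y_i^{(3-\tau)/(\tau-2)+1}$'' is simply incorrect (the correct exponent is $0$), and consequently your per-layer failure probabilities do not vanish, let alone sum to $o(1)$. Placing the $1-\delta_n$ at the \emph{start} does not help after the first step: from $i\ge 1$ on you only know $d_{u_i}\ge y_i$, and the same cancellation occurs.

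The paper repairs this by choosing a \emph{slower} recursion,
\[
y_0=K_n,\qquad y_{i+1}=y_i^{\,1/(\tau-2+D(\log y_i)^{\gamma-1})},
\]
with $D\ge 2C$. The extra $D(\log y_i)^{\gamma-1}$ in the denominator leaves a residual positive exponent, so that the exponent in the failure bound becomes $\gtrsim \exp\{C(\log y_i)^\gamma\}$, which grows double-exponentially in $i$ and is easily summable (see \eqref{eq:yrecursion}--\eqref{eq:probconnectresult}). Only \emph{afterwards} does the paper invoke \cite[Lemma~2.6]{BarHofKom16} to show that this modified recursion still satisfies $y_i\ge (K_n^{1-\delta_n})^{(\tau-2)^{-i}}$ with $\delta_n\to 0$; that is, the form \eqref{eq:yikn} is a \emph{conclusion}, not the defining recursion. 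Your ``discrete Gr\"onwall'' worry is exactly the right instinct, but the resolution requires building the slowly-varying correction into the recursion itself rather than absorbing it into a single initial $\delta_n$.
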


\begin{proof}
	We shall denote the number of edges on the path from $u_{K_n}$ to $u^\star$ by $i_{\max}$ and we define the following sets of vertices 
	\be \label{eq:layer} \Gamma_{i} := \{v \in [n]: d_v \geq y_i(K_n)\} \ee 
	for some sequence $y_i(K_n)=:y_i$ to be determined shortly. $(\Gamma_i)_{i\le i_{\max}}$ can be seen as layers of the graph, where $i_{\max}$ is the maximal $i$ such that $\Gamma_i$ is non-empty.
	Our goal is to prove that there exists a sequence $y_i(K_n)$ such that the following holds: 
	\be \label{eq:totalprobnotconnect} \lim_{n \to \infty} \sum_{i = 0}^{i_{\max}} \bP \left( u_i \in \Gamma_i, u_i \nrightarrow \Gamma_{i+1} \mid d_{u_0} \geq K_n \right) = 0, \ee
	where $u_i$ is a vertex chosen from $\Gamma_{i}$ according to the size-biased distribution, equivalently,  vertex $u_i$ is the vertex that a uniformly chosen half-edge from $\Gamma_{i}$ is attached to. Conditioning on the total number of half edges  $\cH_n$ in $\CMD$, and $\cH_{y_i}$, the number of half-edges attached to vertices in the set $\Gamma_{i}$, by pairing the half-edges of a  vertex $w\in \Gamma_i$, we can pair at least $y_i/2$ half-edges before all the half-edges of $w$ are paired, and each of these half-edges is paired to a half-edge attached to a vertex in $\Gamma_{i+1}$ with probability at least $1-\cH_{y_{i+1}}/ \cH_n$. Thus,   	\be \label{eq:probnotconnect} \bP\left(w \in \Gamma_i, w \nrightarrow \Gamma_{i+1} \mid  \cH_{y_{i+1}}, \cH_n \right) \leq \left(1 - \frac{\cH_{y_{i+1}}}{\cH_n}\right)^{y_i/2}. \ee
	Note in particular that this bounds holds when the vertex is chosen randomly from $\Gamma_i$ in a way that does not take into account its connections, in particular it holds when $w$ is chosen size-biasedly from $\Gamma_i$.
	Since any vertex in $\Gamma_{i}$ has degree larger than $y_i$ and $|\Gamma_{i}| = n(1-F_n(y_i))$, $\cH_{y_i} \geq y_in(1 - F_n(y_i))$.
	Under Assumptions \ref{ass:degree-dist}, \ref{ass:tv}, $\cH_n \leq \varphi n $ for some $\varphi \in \bR$, thus, we have by \eqref{eq:probnotconnect}
	\be \label{eq:probnotconnect2} \bP\left(w \in \Gamma_i, w \nrightarrow \Gamma_{i+1} \right) \leq \exp \left\{-\frac{y_iy_{i+1}(1 - F_n(y_{i+1})}{2 \varphi}\right\} . \ee
	For now we focus on the term in the exponent. Using \eqref{eq:Fn}, we lower bound 
	\be \label{eq:probnotconnectlower} \frac{y_iy_{i+1}(1 - F_n(y_{i+1}))}{2 \varphi} \geq \tilde{c} y_i y_{i+1}^{2-\tau} \mathrm{e}^{-C(\log y_{i+1})^{\gamma}} = \tilde{c}y_i y_{i+1}^{2-\tau-C(\log y_{i+1})^{\gamma-1}}, \ee
	with C defined in \eqref{eq:Fn} and $\tilde{c}$ some positive constant. Now we would like to choose the sequence $y_i=y_i(K_n)$ such that \eqref{eq:probnotconnect2} converges to zero in particular that \eqref{eq:totalprobnotconnect} holds. We claim that this holds when $y_i$ is given by the following recursion
	\be \label{eq:yrecursion} y_0 = K_n, \;\;\;\;\;\;\;\;\; y_{i+1} = y_i^{(\tau-2+D(\log y_i)^{\gamma-1})^{-1}} \ee
	with $D>0$ defined later. Note that  for sufficiently large $K_n$, since $\gamma < 1$, 
	\be \tau -2 +D(\log y_0)^{\gamma - 1} < 1 \ee
	Now let $\varkappa_n = D(\log K_n)^{\gamma - 1}$, then
	\be \label{eq:yinequality} y_{i+1} \geq y_i^{(2-\tau+\varkappa_n)^{-1}} \geq \ldots \geq K_n^{(\tau - 2+\varkappa_n)^{-i}}. \ee
	We use the recursion relation of \eqref{eq:yrecursion} in \eqref{eq:probnotconnectlower} 
	\be \tilde{c}y_i y_{i+1}^{2-\tau-C(\log y_{i+1})^{\gamma-1}} = \tilde{c} y_i^{\frac{2-\tau-C(\log y_{i+1})^{\gamma-1}}{\tau - 2 + D(\log y_i)^{\gamma-1}} + 1} \geq \tilde{c} y_i^{\frac{D(\log y_i)^{\gamma-1}-C(\log y_{i+1})^{\gamma-1}}{\tau - 2 + \varkappa_n}}\ee
Choose $D \geq 2C$ and use that the sequence $y_i$ is increasing and the lower bound in \eqref{eq:yinequality}, then
	\be \tilde{c} y_i^{\frac{D(\log y_i)^{\gamma-1}-C(\log y_{i+1})^{\gamma-1}}{\tau - 2 + \varkappa_n}} \geq \tilde{c} \exp\left\{\frac{C(\log y_i)^{\gamma}}{\tau-2-\varkappa_n}\right\} \geq \tilde{c}\exp\Big\{\frac{\wit C(\log K_n)^\gamma}{(\tau - 2 + \varkappa_n)^{i\gamma}}\Big\}, \ee
	with $\wit C=C/(\tau-2-\varkappa_n)$.
	Combining everything from \eqref{eq:probnotconnectlower}, we can us this lower bound in the exponent on the rhs of \eqref{eq:probnotconnect2}, and, since $\tau-2+\varkappa_n < 1$, the rhs of \eqref{eq:probnotconnect2} is summable in $i$. Summing the lhs of \eqref{eq:probnotconnect2} over $i$ and then use the above bound we obtain
	\be \label{eq:probconnectresult} \sum_{i = 0}^{\infty} \bP \left( u_i \in \Gamma_i, u_i \nrightarrow \Gamma_{i+1} \mid d_{u_0} \geq K_n \right) \leq \hat{C} \exp\Big\{-\tilde{c}\exp\big\{\frac{C(\log K_n)^\gamma}{(\tau - 2 + \varkappa_n)^{\gamma}}\big\}\Big\}, \ee
	which tends to zero with n as $K_n$ tends to infinity with n. This result yields the statement of $\eqref{eq:totalprobnotconnect}$. Using the result of \cite[Lemma 2.6]{BarHofKom16}, the lower bound in \eqref{eq:yinequality} can be improved to
	\be \label{y_ilowerbound} y_i \geq \left(y_0^{1-\delta_n}\right)^{(\tau-2)^{-i}},\ee
	with $\delta_n\to 0$ as $K_n\to \infty$.\footnote{From the proof of \cite[Lemma 2.4]{BarHofKom16}, it is immediate that $\delta_n\le \wit D(\log K_n)^{\gamma-1}$ for some constant $\wit D>0$.}
	
	The path in the statement of the lemma is then constructed as follows, starting from the first vertex $u_0 = u_{K_n}$. By the first term in the sum in \eqref{eq:totalprobnotconnect}, $u_0$ is whp connected to at least one vertex in $\Gamma_1$. By the fact that the pairs of the half edges of $u_0$ are chosen uniformly, $u_1$ is a vertex attached to a uniformly chosen half-edge in $\Gamma_1$. As a result, $u_1$ is chosen according to the size-biased distribution within $\Gamma_1$. Then we iterate this procedure to obtain $u_2, u_3, \ldots$ in $\Gamma_2, \Gamma_3, \ldots$ until we reach a vertex of degree at least $n^{\alpha}$ for $\alpha = (\tau-2)(1+\zeta)/(\tau-1)$. The constructed path uses at most all layers so the number of layers is an upper bound on the length of the path from $u_{K_n}$ to $u^\star$. The last layer that is nonempty is then $\Gamma_{i_{\max}}$ with $i_{\max}$ is the largest integer with
		\be \label{eq:max-i-deg}K_n^{(1-\delta_n)\left(\tau - 2\right)^{-i_{\max}}} \le  n^{\alpha}. \ee 
	Since $\delta_n\to 0, \al < 1, $ the following upper bound then holds
		\be\label{eq:imax-2} i_{\max} \leq \frac{\log \log n - \log \log K_n + \log \alpha-\log (1-\delta_n)}{|\log(\tau-2)|} \leq \frac{\log \log n - \log \log K_n}{|\log(\tau-2)|}. \ee
		We yet have to show that $y_{i_{\max}}\ge n^{\al(\tau-2)}$.
For this, elementary rearrangement yields that the lhs of \eqref{eq:max-i-deg} equals $n^{\al(\tau-2)^{\beta}}$, with $\beta\in[0,1)$ the fractional part of the middle term in \eqref{eq:imax-2}. This finishes the proof.
\end{proof}
	
\begin{lemma}[Upper bound on the weighted graph distance]
	\label{lem:upperbound}
	Consider $\CMDL$ satisfying Assumptions \ref{ass:degree-dist}-\ref{ass:tv} and $u,v$ two uniformly chosen vertices. Suppose the edge weights are i.i.d.\ from $\FL$  that satisfies \eqref{eq:weigthdist}. Then for all $\ve > 0$
	\be \label{eq:lem5.3} \lim_{n \to \infty} \bP\Bigg( d_L(u, v) < (1 + \ve) 2\an\Bigg) = 1. \ee
Further, there exists a path between $u,v$ with at most $(1+\ve)2\log \log n/(\tau-2)$ edges and having total length at most $(1 + \ve) 2\an$.
\end{lemma}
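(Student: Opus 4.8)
\emph{Proof proposal.} The plan is to build, whp, a walk from $u$ to $v$ in $\CMDL$ glued from three pieces, following Section~\ref{s:proof-overview-upper}: on each side a short \emph{exploration piece} reaching a vertex of moderately large degree, then a long \emph{layered piece} inside a degree-dependent percolated subgraph passing through vertices of rapidly growing degree, and finally a \emph{connecting piece} of bounded length between two vertices of polynomially large degree; I then control the length and the number of edges of each piece. Fix $\ve>0$ and choose $\ve_1:=\ve/8$ and $\eta\in(0,1)$ with $1/\eta\le1+\ve/8$. By the Claim following Lemma~\ref{lem:upperboundexpl} applied with $a_m:=\ve_1\an$ there is a sequence $M_n\to\infty$ with $\sum_{i\le M_n}L_i<\ve_1\an$ whp; note $\an\to\infty$, since \eqref{eq:weigthdist} implies $\sum_{i\ge1}F_L^{(-1)}(\mathrm{e}^{-(\tau-2)^{-i}})=\infty$ (compare the sum with the corresponding integral; the two series differ only by the constant factor $|\log(\tau-2)|$ in the iterated exponent). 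Intersecting $M_n$ with a slowly growing cap I may moreover assume $M_n=O(\log\log\log n)$, so that, with $M>1/|\log(\tau-2)|$ fixed, $\widetilde{K_n}:=\exp\{\exp\{M_n/M\}\}$ satisfies $\widetilde{K_n}\to\infty$ and $\widetilde{K_n}=O(\log n)$; let $K_n$ be as in \eqref{eq:Kn}, so $K_n\to\infty$.

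First I run the BFS exploration around $u$ and $v$ simultaneously, revealing only graph structure. By Corollary~\ref{corr:couple} and Lemma~\ref{lem:generation-Kn}, whp within $M_n=M\log\log\widetilde{K_n}$ generations the two (disjoint) explorations reach vertices $q_{K_n}$ with $d_{q_{K_n}}\ge\widetilde{K_n}$, $q\in\{u,v\}$, and by Lemma~\ref{lem:upperboundexpl} the traversed path has length $d_L(q,q_{K_n})<\ve_1\an$ whp; I stop the exploration at $q_{K_n}$ without revealing the pairings of its remaining half-edges. Then I perform degree-dependent percolation with $p(d):=b\,\mathrm{e}^{-c(\log d)^\eta}$ (for any fixed $b\in(0,1]$, $c>0$), realised via the edge-lengths by keeping $e=(x,y)$ iff $L_e\le\xi(d_x,d_y)$, where $\xi(d,d')$ is the threshold with $\bP(L\le\xi(d,d'))=p(d)p(d')$; since $p$ is non-increasing, so is $\xi$ in each argument. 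By Corollary~\ref{cor:equalitydist} the percolated graph $G^r$ is distributed as $\mathrm{CM}_{n+A}(\boldsymbol{d^r},\mathbf{1}(A))_{|[n]}$, whose degree sequence on $[n]$ still obeys \eqref{eq:Fn} on $[\theta,n^{\al}]$ with the same $\tau,\al$ by Lemma~\ref{lem:percdegrees}, while $G^r\subseteq\CMDL$, so every path of $G^r$ is present in $\CMDL$. By Lemma~\ref{lem:degreeafterperc} (applied to the single vertex $q_{K_n}$, so that no union bound is needed and $K_n\to\infty$ suffices), $q_{K_n}$ has degree $\ge K_n$ in $G^r$ whp. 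Finally, applying Proposition~\ref{prop:pathlength} inside $G^r$ from $u_{K_n}$ and from $v_{K_n}$ gives, whp, paths $q_{K_n}=w_0^{(q)},\dots,w_{i_{\max}}^{(q)}=\widetilde q$ in $G^r$ with the $G^r$-degree of $w_i^{(q)}$ at least $y_i(K_n)\ge(K_n^{1-\delta_n})^{(\tau-2)^{-i}}$ ($\delta_n\to0$), $\deg(\widetilde q)\ge n^{\al(\tau-2)}$, and $i_{\max}\le(\log\log n-\log\log K_n)/|\log(\tau-2)|$.

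The core estimate is the length of a layered piece. Since the original degree of $w_i^{(q)}$ is at least its $G^r$-degree $\ge y_i(K_n)$ and $\xi$ is non-increasing, each of its edges satisfies $L_{(w_i^{(q)},w_{i+1}^{(q)})}\le\xi(y_i(K_n),y_{i+1}(K_n))=F_L^{(-1)}\big(p(y_i(K_n))p(y_{i+1}(K_n))\big)$. Substituting $p$ and $\log y_i(K_n)\ge(1-\delta_n)(\log K_n)(\tau-2)^{-i}$, then the change of index $m=j_0+\eta i$ with $j_0:=\log A_n/|\log(\tau-2)|$ and $A_n\asymp(\log K_n)^\eta$, this bound becomes $g(j_0+\eta i)$, where $g(m):=F_L^{(-1)}(\mathrm{e}^{-(\tau-2)^{-m}})$ is non-increasing and $\an=\sum_{m=1}^{\lfloor\log\log n/|\log(\tau-2)|\rfloor}g(m)$. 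A monotone comparison of the sum with the integral, using that $j_0+\eta\,i_{\max}\le\log\log n/|\log(\tau-2)|$ (the $\log\log K_n$ contributions to $j_0$ and to $i_{\max}$ cancel, and $\eta<1$), gives
\[
\sum_{i=0}^{i_{\max}}g(j_0+\eta i)\ \le\ g(j_0)+\frac{1}{\eta}\int_{0}^{\lfloor\log\log n/|\log(\tau-2)|\rfloor}g(y)\,\mathrm{d}y\ \le\ \frac{1}{\eta}\,\an+O(1).
\]
As $\an\to\infty$ and $1/\eta\le1+\ve/8$, the right-hand side is $\le(1+\ve/4)\an$ for $n$ large, so each layered piece has length $\le(1+\ve/4)\an$ whp. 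For the connecting piece, a standard first/second-moment argument using \eqref{eq:Fn} and $\cH_n\asymp n$ shows that, since $\al>1/2$, a vertex of $G^r$-degree $\ge n^{\al(\tau-2)}$ is whp joined in $G^r$ to a vertex of degree $\ge n^{1/2}$, and two vertices of degree $\ge n^{1/2}$ are whp joined in $G^r$ by a length-$\le2$ path through a common neighbour; hence $\widetilde u$ and $\widetilde v$ are joined in $G^r$ by $\le4$ edges, each of length $\le F_L^{(-1)}(p(n^{1/2})^2)=O(1)$.

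Concatenating the two exploration pieces, the two layered pieces, and the connecting piece produces a walk from $u$ to $v$ in $\CMDL$ of total length at most $2\ve_1\an+2(1+\ve/4)\an+O(1)\le 2(1+\ve)\an$ for $n$ large, and with at most $2M_n+2i_{\max}+4$ edges, which is at most $(1+\ve)$ times $2\log\log n/|\log(\tau-2)|$ for $n$ large; since only finitely many whp events are intersected, \eqref{eq:lem5.3} and the stated path follow. I expect the main obstacle to be the length estimate for the layered piece: because Lemma~\ref{lem:percdegrees} forces $p$ to decay more slowly than every power of $d$, one cannot, unlike in the explosive regime, arrange $\xi(y_i,y_{i+1})$ to equal a shifted term of the series defining $\an$; instead $g$ gets evaluated on a finer, $\eta$-spaced grid reaching only depth $\sim\eta\log\log n/|\log(\tau-2)|$. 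Reconciling this needs, simultaneously, $\eta$ close to $1$ (to absorb the factor $1/\eta$ into $1+\ve$), the comparison against the \emph{full} partial sum $\an$ rather than its truncation, and a choice of $M_n$ small enough that $\widetilde{K_n}=O(\log n)$ (so that Lemmas~\ref{lem:generation-Kn} and~\ref{lem:degreeafterperc} apply) yet large enough that $\sum_{i\le M_n}L_i<\ve_1\an$ whp --- which is possible only because the Claim after Lemma~\ref{lem:upperboundexpl} permits $M_n$ to grow arbitrarily slowly.
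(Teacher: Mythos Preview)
Your proposal is correct and follows essentially the same route as the paper's proof: the same three-segment construction (exploration to $q_{K_n}$, layered path in the percolated graph via Proposition~\ref{prop:pathlength}, short connection between the two high-degree ends), the same percolation function $p(d)=b\,\mathrm{e}^{-c(\log d)^\eta}$, and the same integral/change-of-variables trick to turn $\sum_i\xi(y_i,y_{i+1})$ into $\tfrac{1}{\eta}$ times a partial sum dominated by $a_n$, with $\eta$ chosen close enough to $1$. Your explicit cap $M_n=O(\log\log\log n)$ to force $\widetilde{K_n}=O(\log n)$ is a point the paper leaves implicit but which is indeed needed for Lemmas~\ref{lem:generation-Kn} and~\ref{lem:degreeafterperc}; the paper's connecting piece uses three edges via $\Lambda_{1/2+\delta}$ rather than your four, but this is cosmetic.
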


\begin{proof} For brevity let $a_n:=\an$.	
First we construct the initial segments of the connecting path from both ends from $u,v$, as described heuristically in Section \ref{s:proof-overview-upper}. Let $M_n$ be as in Lemma \ref{lem:upperboundexpl}, with $\ve_1:=\ve/3$. Then, consider any vertex $w$ of graph distance $M_n$ away in $\CMD$ from $q\in\{u,v\}$, chosen \emph{independently} of $(L_e)_e$. Then, since the edge-lengths in $\CMDL$ are i.i.d. on the edges of the path from $q$ to $w$, by Lemma \ref{lem:upperboundexpl}, $\mathrm d_L(q, w) \le a_n\ve/3$ in $\CMDL$ whp. As a result of Lemma \ref{lem:generation-Kn}, for any $M$ with $M|\log (\tau-2)|>1$, at graph distance $M \log \log \wit K_n$ away from $q\in\{u,v\}$, there is at least one vertex with degree $\wit K_n$ in $\CMD$ whp. Thus, by defining  $ \wit K_n$ via  $M_n=M \log \log \wit K_n$, (equivalently, $\wit K_n:= \exp\big\{\exp\{M_n/M\}\big\}$), we find vertices with degree at least $\wit K_n$ at graph distance $M_n$ away from $q\in\{u,v\}$, whp.  Then, pick $q_{K_n}$ for  $q\in \{u,v\}$ in an arbitrary way that is independent of $(L_e)_e$. Then, the previous argument applies and whp,
\be\label{eq:qqn}\mathrm d_L(q, q_{K_n}) \le a_n\ve/3 \ee
 in $\CMDL$ for $q\in\{u,v\}$.

Next we connect $u_{K_n}, v_{K_n}$ using degree-dependent percolation. When applying edge-dependent percolation (as in Def.~\ref{def:perc2}) on $\CMDL$, we can use the edge-lengths $(L_e)_e$ as auxiliary variables to decide which edge to keep. Namely, we keep edge $e$ iff $L_e\le \xi(d, d')$, with $\xi(d,d')$ satisfying $\Pv(L\le \xi(d,d'))=p(d)p(d')$. By Corollary \ref{cor:equalitydist}, we can consider the percolated (sub)graph as an instance of a configuration model where the new degree sequence is $\bf{d^r}$. We yet have to specify the percolation function that we use. For some $c>0,\eta\in(0,1)$ to be determined later, let 
\be \label{eq:percfunction} p(d) = \exp\{ -c(\log d)^\eta\}. \ee
The conditions of Lemma \ref{lem:degreeafterperc} apply, thus, with $K_n$ as in \eqref{eq:Kn}, $d_{q_{K_n}}^r\ge  K_n$ whp for $q\in\{u,v\}$. Further, the conditions of Lemma \ref{lem:percdegrees} are also satisfied, thus the $\bf{d}^r$ sequence obtained after percolation still satisfies Assumption \ref{ass:degree-dist} (except the condition on the minimal degree being at least $2$). 
Hence, following Proposition \ref{prop:pathlength}, we construct a path connecting $u_{K_n}, v_{K_n}$ in the \emph{percolated graph}, with good control on the (percolated) degrees along the path.

For $q\in \{u,v\}$, we use the constructed path as described in Proposition \ref{prop:pathlength} starting from  $q_{K_n}$ to reach a vertex $q^\star$ with  $d^r_{q^\star}\ge n^{\al(\tau-2)}$.  A lower bound on the degree of the $i$th vertex on this path is $y_i=y_i(K_n)$ given in \eqref{eq:yikn}. Since $p(d)$ is monotone decreasing, $\xi(d, d')$ is non-increasing in both variables. Thus, the edge-lengths on the constructed path are at most $\xi(y_{i},y_{i+1})$ for $i = 0, 1, \ldots, i_{\max}-1$. Hence, for $q\in \{u,v\}$
	\be \label{eq:upperhalfpath} d_L(q_{K_n}, q^\star) \leq \sum_{i = 0}^{i_{\max}-1}\xi(y_{i},y_{i+1}). \ee 
Next we connect the two high-degree vertices $u^\star$ and $v^\star$. Let us denote choose two vertices $w_1^\star, w_2^\star$  with degrees  is at least $n^{1/2+\delta}$ for $\delta\in (0, \al-1/2)$ arbitrary but fixed. Recall that  $\CH_n$ stands for the total number of half-edges, and is at least some constant $\phi n$ under Assumption \ref{ass:degree-dist} even without the minimal degree assumption.  
Fix $\delta\in (0, \al-1/2)$ and write $\Lambda_{1/2+\delta}:=\{w: d_w^r\ge  n^{1/2+\delta}\}$, as well as $\CH_{1/2+\delta}:=\sum_{w\in \Lambda_{1/2+\delta}} d_w^r$. 
Then, following \eqref{eq:probnotconnect}-\eqref{eq:probnotconnectlower},
	\be \label{eq:conn-bound} \bP\left( q^\star \not \leftrightarrow \Lambda_{1/2+\delta} \mid \CH_n, \CH_{1/2+\delta}\right) \leq \left(1 - \frac{\CH_{1/2+\delta}}{\cH_n}\right)^{n^{\al(\tau-2)}/2} \leq \exp\left\{-c n^{\al(\tau-2) + (2-\tau)(1/2+\delta) -o(1)}\right\}, \ee
	which tends to zero as $n\to \infty$ since $1/2+\delta<\al$. Thus, we can find vertices  $u^{\star\star}, v^{\star\star}\in \Lambda_{1/2+\delta}$ 
 such that $(q^{\star}, q^{\star\star})$ are kept edges the percolated graph whp.
Finally, we show that the edge $(u^{\star\star}, v^{\star\star})$ is also present whp in the percolated graph. 
\be \label{eq:conn-bound} \bP\left( u^{\star\star} \not \leftrightarrow v^{\star\star} \mid \CH_n\right) \leq \left(1 - \frac{n^{1/2+\delta}}{\cH_n}\right)^{n^{1/2+\delta}/2} \leq \exp\left\{-c n^{(1+\delta) -1}\right\}, \ee
which tends to zero as $n\to \infty$.
	By the monotonicity of $\xi$, whp, the vertices $u^\star$ and $v^\star$ are connected via at most 3 edges with length at most
	\be \label{eq:connectingtwopaths} d_L(u^\star, v^\star) \leq 3 \xi(n^\alpha(\tau-2), n^{1/2}). \ee Combining \eqref{eq:qqn},\eqref{eq:upperhalfpath} and \eqref{eq:connectingtwopaths}, we arrive at \eqref{eq:upper-heuristic}.
	In what follows we show that the rhs of \eqref{eq:upperhalfpath} is at most $(1+\ve/3)a_n$.  By the definition of edge-percolation in Def.~\ref{def:perc2}, we keep an edge connecting vertices with degrees $y_i, y_{i+1}$ with probability at most $p(y_i)p(y_{i+1})$. Using the form $p(d)$ from \eqref{eq:percfunction} and its monotonicity, and the lower bound on $y_i$ from Prop.~\ref{prop:pathlength},
		\be\label{eq:pyi} p(y_i)p(y_{i+1}) \le  \exp\{ -c(\log y_i)^\eta - (\log y_{i+1})^\eta\} \leq \exp\left\{-\wit c (\tau-2)^{-\eta (i+1)}(\log K_n^{1-\delta_n})^\eta\right\}, \ee
		with $\wit c=c (1+(\tau-2)^{\eta}).$ 
Recall that we keep an edge $e$ between vertices with degrees $d, d'$ iff its edge-length is at most $\xi(d, d')$ in the edge-percolation. This gives us  $\xi(d, d')=F_L^{(-1)}(p(d)p(d'))$, and, by monotonicity again, from \eqref{eq:pyi} it follows that
	\be \xi(y_{i},y_{i+1})\leq F_L^{\sss{(-1)}}\Big(\exp\big\{-c(\log K_n^{1-\delta_n})^\eta(\tau-2)^{-\eta (i+1)}\big\}\Big). \ee
	Combining \eqref{eq:pyi} with the bound on $i_{\max}$ from  Proposition \ref{prop:pathlength}, \eqref{eq:upperhalfpath} can be bounded above as
	\be d_L(q_{K_n}, q^\star) \leq \sum_{i = 1}^{\left\lfloor\frac{\log (\log n/\log K_n)}{|\log(\tau-2)|}\right\rfloor}F_L^{\sss{(-1)}}\Big(\exp\big\{-c(\log K_n^{1-\delta_n})^\eta(\tau-2)^{-\eta i}\big\}\Big). \ee
		Similar to the proof of Lemma \ref{lem:lowerboundexpl}, we need to transform the rhs to the desired form in \eqref{eq:lem5.3}. Using similar bounds as in \eqref{eq:tranformationineq}, we rewrite the sum to an integral, change variables as $(\tau-2)^{-\eta x}(\log K_n^{1-\delta_n})^\eta =: (\tau-2)^{-y}$, and change the integral back to a sum. This operation shifts the summation boundaries by $\eta \log\log K_n / |\log(\tau-2)| $ and multiplies the whole sum by $\eta$. We obtain
	\be
	 d_L(q_{K_n}, q^\star)	\leq \frac{1}{\eta} \sum_{i=\lfloor\eta \frac{\log \log K_n}{|\log(\tau-2)|}\rfloor}^{\lceil\eta \frac{\log \log n}{|\log(\tau-2)|}\rceil}F_L^{\sss{(-1)}}(\e^{-1/(\tau-2)^i}). \ee
	By choosing $\eta\in(0,1)$ in \eqref{eq:percfunction} such that $1/\eta < 1+\ve/3$ so we obtain that
	\be \label{eq:upperpathbound} d_L(q_{K_n}, q^\star) \leq (1+\ve/3) \an. \ee
	Finally, it is not hard to see that $3 \xi(n^\alpha(\tau-2), n^{1/2})\le  a_n\ve/3$ holds as well for all large enough $n$. Combining everything, we arrive at 
	\be\ba \mathrm{d}_L(u,v) &\leq  \sum_{q\in \{u,v\}} \big(\mathrm{d}_L(q, q_{K_n})+ 
\mathrm{d}_L(q_{K_n}, q^\star)\big) + \mathrm{d}_L(u^\star, v^\star)\\
	&\le 2 a_n (\ve/3 + (1+\ve/3)) + a_n\ve/3 \le 2a_n (1+\ve).\ea\ee
This finishes the proof of \eqref{eq:lem5.3}. For the second statement, recall that for some $M\ge 1/|\log (\tau-2)|$, $M_n=M\log \log(\wit K_n)$ and  note that the number of edges on the constructed path is at most 
\be 2M\log \log \wit K_n+2\frac{\log \log n -\log \log K_n}{|\log (\tau-2)|}+3,\ee
where the relation between $\wit K_n$ and $K_n$ is described in Lemma \ref{lem:degreeafterperc} in \eqref{eq:Kn}. From \eqref{eq:Kn} it is elementary to check that for all $n$ large enough
\be \log \log K_n= \log \log \wit K_n +\log (1-c(\log \wit K_n)^{\eta-1})=\log \log \wit K_n +o(1),\ee
thus, writing $M:=(1+z)/|\log (\tau-2)|$ for some $z>0$, the number of edges in the constructed path is at most
\be \frac{2\log \log n + 2z \log \log \wit K_n + o(1)}{|\log (\tau-2)|}+3\le (1+\ve) \frac{2\log \log n}{|\log (\tau-2)|}, \ee
as desired.
\end{proof}
\begin{proof}[Proof of Theorem \ref{thm:mainthm}]
Lemma \ref{lem:lowerbound} states the proof of the lower bound and Lemma \ref{lem:upperbound} the proof of the upper bound. These combined prove the statement of the theorem.
\end{proof}

\subsection{Erased configuration model}
In this section we prove Theorem \ref{thm:erasedthm}. 

\begin{proof}[Proof of Theorem \ref{thm:erasedthm}, lower bound]
The strategy of the proof is the following: first we show that the lower bound is also valid in the erased model. Then, we show that the constructed paths in the proof of the upper bound between vertices $q, q_{K_n}$ and $q_{K_n}, q^\star$ are whp simple for $q\in \{u,v\}$, and as a result they survive the erasing procedure whp. Finally, we connect $u^\star, v^\star$ in the erased model in some other way than that in the original model.

First we start with the lower bound. The proof of Lemma \ref{lem:lowerbound} consists of a BFS exploration around the two vertices $u$ and $v$. These explorations can whp be coupled to two BP tress and therefore all edges within these trees are whp simple. So this lemma remains valid after erasure and thus the lower bound follows both for the weighted distance as well as for the hopcount.
\end{proof}
In the proof of the upper bound we again use a coupling to BP trees  to find $u_{K_n}, v_{K_n}$. Thus, the path
between $q, q_{K_n}$ is again whp simple and thus it survives erasure. Next we investigate the constructed path between $q_{K_n}, q^\star$. This path is constructed in the percolated graph. The erasure happens \emph{before} the degree-dependent percolation, so edges of the path constructed in Proposition \ref{prop:pathlength} could in principle be deleted earlier in the erasure procedure.  We show that the edges on the constructed path were not part of a multiple edge whp, meaning that they were whp not erased before. 
For this, we state a lemma that gives a bound on the original degree of a vertex, given its percolated degree $d^r$. This lemma is the `reverse' of Lemma \ref{lem:degreeafterperc}.
\begin{claim}[Degree after percolation vs original degree]
	\label{cl:orig-deg}
 Apply  half-edge percolation as described in Definition \ref{def:perc1} with percolation function $p(d)$  satisfying \eqref{eq:perccondition} on $\CMD$. Let  $\omega(n)$ be an arbitrary sequence that tends to infinity with $n$.  Let $s(x)$ be defined as in \eqref{eq:y(x)}.
	 Then, for a vertex $w\in \CMD$, 
	 \be\label{eq:bin-conc-2} \Pv( d_w \ge s(x) \mid d_w^r\le x) \le c\exp\{ - x/4\}\ee
	 for some $c>0$.
\end{claim}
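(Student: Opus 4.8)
The plan is a short Bayes-rule computation fed by the binomial lower-tail estimate already established in the proof of Lemma~\ref{lem:percdegrees}. Here $d_w$ is the (random) original degree of $w$ --- i.e.\ $w$ is a uniformly, or size-biasedly, chosen vertex, or, in the i.i.d.-degree setting, $w$ is fixed but $d_w$ is random --- and, conditionally on $d_w$, Definition~\ref{def:perc1} gives $d_w^r\equalsd\mathrm{Bin}(d_w,p(d_w))$. First I would write, by the law of total probability,
\be \Pv(d_w\ge s(x)\mid d_w^r\le x)=\frac{\sum_{k\ge s(x)}\Pv(d_w=k)\,\Pv(\mathrm{Bin}(k,p(k))\le x)}{\Pv(d_w^r\le x)},\ee
so that the statement reduces to an upper bound on the numerator and a lower bound on the denominator.

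For the numerator, I would recall from the proof of Lemma~\ref{lem:percdegrees} (substitute $s(x)$ from \eqref{eq:y(x)} into \eqref{eq:perccondition} and use that $d\mapsto dp(d)$ is non-decreasing beyond an absolute constant) that $kp(k)\ge s(x)p(s(x))\ge 2x\,\mathrm e^{(c/2)(\log(2x/b))^{\eta}}\ge 2x$ for every $k\ge s(x)$, once $x$ exceeds an absolute constant. In particular $x\le\tfrac12 kp(k)$, hence $\{\mathrm{Bin}(k,p(k))\le x\}\subseteq\{\mathrm{Bin}(k,p(k))\le\tfrac12\mathbb E[\mathrm{Bin}(k,p(k))]\}$, and the Chernoff bound used in \eqref{eq:bin-conc} yields $\Pv(\mathrm{Bin}(k,p(k))\le x)\le\mathrm e^{-kp(k)/8}\le\mathrm e^{-x/4}$. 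Summing over $k\ge s(x)$ bounds the numerator by $\mathrm e^{-x/4}\Pv(d_w\ge s(x))\le\mathrm e^{-x/4}$.

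For the denominator, I would use the pointwise inequality $d_w^r\le d_w$, so $\Pv(d_w^r\le x)\ge\Pv(d_w\le x)$; since the tail of $d_w$ decays as a power law with positive exponent (exponent $\tau-1$ via \eqref{eq:Fn} in the uniform case, $\tau-2$ via \eqref{eq:FB-bounds} in the size-biased case), $\Pv(d_w\le x)\to 1$, so $\Pv(d_w^r\le x)\ge\tfrac12$ for every $x$ beyond some absolute constant $x_0$ (with $x_0<n^{\al}$ for $n$ large, which is the only regime used in the application). Combining the two estimates gives $\Pv(d_w\ge s(x)\mid d_w^r\le x)\le 2\mathrm e^{-x/4}$ for $x\ge x_0$, while for $x<x_0$ the quantity $c\,\mathrm e^{-x/4}$ already exceeds $1$ as soon as $c\ge 2\mathrm e^{x_0/4}$; the claim then holds with this choice of $c$.

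The computation is entirely routine, and there is no real obstacle: the only two points needing care are the lower bound on $\Pv(d_w^r\le x)$, so that the denominator does not spoil the bound, and checking that every inequality is uniform over the range of $x$ relevant to the application. Both follow from the power-law bounds of Assumption~\ref{ass:degree-dist} together with the estimates already recorded in the proofs of Lemmas~\ref{lem:percdegrees} and~\ref{lem:degreeafterperc}.
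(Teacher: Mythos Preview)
Your proposal is correct and follows essentially the same approach as the paper: the paper's proof is a one-sentence sketch (``directly follows from Bayes' theorem applied to the lhs of \eqref{eq:bin-conc-2}, and following the calculations between \eqref{eq:y(x)} and \eqref{eq:bin-conc}''), and you have faithfully unpacked it---the numerator is handled by the Chernoff bound $\Pv(\mathrm{Bin}(k,p(k))\le x)\le\mathrm e^{-kp(k)/8}\le\mathrm e^{-x/4}$ once $kp(k)\ge 2x$, and the denominator by $d_w^r\le d_w$ together with the power-law tail. Your explicit remark that $d_w$ must be random for the statement to be nontrivial, and your handling of small $x$ via the constant $c$, are appropriate clarifications that the paper leaves implicit.
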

\begin{proof}
The proof directly follows from Bayes' theorem applied to the lhs of \eqref{eq:bin-conc-2}, and following the the calculations between \eqref{eq:y(x)} and \eqref{eq:bin-conc}. 
\end{proof}
\begin{lemma}[No multiple edges on the path $q_{K_n}, q^\star$] \label{lem:uniquepath}
	Let $u_i$ and $u_{i+1}$ be two consecutive vertices on the constructed path between $u_{K_n}, u^\star$  in Proposition \ref{prop:pathlength} and $i = 0, \ldots i^{\max}-1$, then
	\be \label{eq:uniqueedges} \lim_{n \to \infty} \bP( \geq 2 \text{ edges connecting } u_i \leftrightarrow u_{i+1} | \geq 1 \text{ edge connecting } u_i \leftrightarrow u_{i+1}) = 0. \ee
\end{lemma}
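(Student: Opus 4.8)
The plan is to prove the (formally stronger) statement that whp \emph{every} edge of the path constructed in Proposition~\ref{prop:pathlength} is a simple edge of $\CMD$; \eqref{eq:uniqueedges} then follows immediately, because on the whp-event that the construction of Proposition~\ref{prop:pathlength} succeeds each consecutive pair $u_i,u_{i+1}$ is joined by at least one edge, so the conditioning event in \eqref{eq:uniqueedges} has probability $\to1$, while the event ``$\ge2$ edges $u_i\leftrightarrow u_{i+1}$'' is contained in the negligible complement.

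First I would fix the order in which randomness is revealed: reveal the degree sequence and the independent half-edge thinnings of Definition~\ref{def:perc1} (this fixes all percolated degrees $(d^r_v)_v$, hence all layers $\Gamma_i$), then run the pairing used in Proposition~\ref{prop:pathlength} until $u_0,\dots,u_{i_{\max}}$ and the path edges are exposed. Along the way at most $\sum_{i\le i_{\max}}d_{u_i}$ half-edges are paired, which is $o(\CH_n)$ once the degree bound below is in force; conditionally on this history the remaining half-edges are matched uniformly, and a fixed not-yet-used half-edge of $u_i$ is matched to a fixed not-yet-used half-edge of $u_{i+1}$ with conditional probability at most $2/\CH_n\le 2/(\phi n)$. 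A union bound over the at most $d_{u_i}d_{u_{i+1}}$ ordered half-edge pairs at $(u_i,u_{i+1})$ other than the path edge gives
\[
\Pv\big(\ge 2\text{ edges }u_i\leftrightarrow u_{i+1}\mid \text{history}\big)\ \le\ \frac{2\,d_{u_i}d_{u_{i+1}}}{\phi\,n}.
\]
Thus the whole statement reduces to the estimate that whp $d_{u_i}d_{u_{i+1}}\le n^{1-\varepsilon}$ for some fixed $\varepsilon>0$, uniformly in $i\le i_{\max}$.

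To control the degrees: for $i\ge1$ the vertex $u_i$ is size-biased (by $d^r$) inside $\Gamma_i$, so $\Pv(d^r_{u_i}\ge t)=\big(\sum_{v:d^r_v\ge t}d^r_v\big)/\big(\sum_{v:d^r_v\ge y_i}d^r_v\big)$; bounding numerator and denominator by summation by parts via the power-law bound \eqref{eq:Fn} for $F_n^r$ — legitimate by Lemma~\ref{lem:percdegrees} — yields $\Pv(d^r_{u_i}\ge t)\le C\,(y_i/t)^{\tau-2}\,\mathrm e^{C((\log t)^{\gamma}+(\log y_i)^{\gamma})}$, and taking $t=y_i\mathrm e^{A(\log n)^{\gamma}}$ with $A$ a large constant makes this $\le \mathrm e^{-c(\log n)^{\gamma}}$. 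On the complementary event, Claim~\ref{cl:orig-deg} (with $s(\cdot)$ as in \eqref{eq:y(x)}, using $d^r_{u_i}\ge y_i\to\infty$) upgrades this to $d_{u_i}\le s\big(y_i\mathrm e^{A(\log n)^{\gamma}}\big)\le y_i\,\mathrm e^{A'(\log n)^{\beta}}$ whp, with $\beta:=\max\{\gamma,\eta\}<1$ ($\eta$ from \eqref{eq:percfunction}). From the recursion \eqref{eq:yrecursion} one has $y_i\le K_n^{(\tau-2)^{-i}}$ and $y_{i_{\max}}\le n^{(\tau-2)(1+\zeta)/(\tau-1)}$ (the stopping level in the proof of Proposition~\ref{prop:pathlength}), so $y_iy_{i+1}$ is maximal at $i=i_{\max}-1$, where it is $y_{i_{\max}}^{\,\tau-1+o(1)}\le n^{(\tau-2)(1+\zeta)+o(1)}$; combining, $d_{u_i}d_{u_{i+1}}\le n^{(\tau-2)(1+\zeta)+o(1)}$ whp for all $1\le i\le i_{\max}-1$. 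For the first edge $u_0\leftrightarrow u_1$ I would pick $u_{K_n}=u_0$ of degree in $[\widetilde K_n,2\widetilde K_n]$ (such a vertex exists whp by the power law — a harmless modification of the construction), so $d_{u_0}d_{u_1}=\widetilde K_n^{O(1)}\mathrm e^{A'(\log n)^{\beta}}=n^{o(1)}$.

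Finally, choosing the free parameter $\zeta$ in Proposition~\ref{prop:pathlength} small enough that $(\tau-2)(1+\zeta)<1$ gives $d_{u_i}d_{u_{i+1}}\le n^{1-\varepsilon}$ whp, uniformly over $i\le i_{\max}=O(\log\log n)$; plugging this into the combinatorial estimate and taking a union bound over the $O(\log\log n)$ path edges (and adding the $o(1)$ probabilities of the bad-degree events) shows that whp no path edge is multiple. I expect the only genuinely delicate point to be the degree control: the path vertices arise by size-biased sampling, so their original degrees have heavy, infinite-mean tails, and one must (i) invoke Claim~\ref{cl:orig-deg} and Lemma~\ref{lem:percdegrees} to pass between percolated and original degrees losing only an $n^{o(1)}$ factor, and (ii) check that the worst case, the edge into the high-degree endpoint $u^\star$, still obeys $d_{u_{i_{\max}-1}}d_{u^\star}=o(n)$ — precisely where smallness of $\zeta$ enters. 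The reveal-order argument and the $2d_{u_i}d_{u_{i+1}}/\CH_n$ pair bound are routine.
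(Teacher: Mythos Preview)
Your argument is correct and follows the same skeleton as the paper: both reduce to the combinatorial bound $\Pv(\ge 2\text{ edges }u_i\leftrightarrow u_{i+1}\mid\ge 1)\lesssim d_{u_i}d_{u_{i+1}}/\CH_n$ and then control the original degrees $d_{u_i}$ via Claim~\ref{cl:orig-deg}, with the worst edge being the one adjacent to $u^\star$, where one needs the stopping exponent (your $\zeta$, the paper's implicit constraint $\alpha(\tau-2)(\tau-1)<1$) small enough.

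The one substantive difference is how the \emph{percolated} degree $d^r_{u_i}$ is bounded from above before invoking Claim~\ref{cl:orig-deg}. The paper uses a much shorter device: by construction $u_i\in\Gamma_i$, and if in addition $d^r_{u_i}\ge y_{i+1}$ then $u_i\in\Gamma_{i+1}$ already, so the path has ``jumped'' a layer and one may relabel; hence without loss of generality $d^r_{u_i}\le y_{i+1}$ \emph{deterministically}, and Claim~\ref{cl:orig-deg} gives $d_{u_i}\le s(y_{i+1})$ whp uniformly in $i$, including $i=0$. Your route---size-biased tail bounds on $d^r_{u_i}$ inside $\Gamma_i$ via Lemma~\ref{lem:percdegrees}, plus a modification of the choice of $u_{K_n}$ to handle $i=0$---is correct but noticeably heavier, and the modification (picking $u_{K_n}$ with $d_{u_{K_n}}\in[\widetilde K_n,2\widetilde K_n]$) requires a small addendum to Lemma~\ref{lem:generation-Kn}. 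The paper's layer-jump observation buys you all of that for free in one line.
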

\begin{proof}
Note that the path in  Prop.~\ref{prop:pathlength} is later, in the proof of Theorem \ref{thm:mainthm} is  constructed in the \emph{percolated graph}. Thus $u_i$, the $i$th vertex on this path has percolated degree at least as in \eqref{eq:yikn}. Without loss of generality we can assume that 
\be\label{eq:perc-upper} d_{u_i}^r \le \left(K_n^{1-\delta_n}\right)^{\left(\frac{1}{\tau - 2}\right)^{i+1}}=:y_{i+1}, \ee
since otherwise the path has `jumped' a layer and one can consider the path to be shorter by an edge. 
	Recall that $d_{u_{i_{\max}}}\ge n^{\al(\tau-2)}$ holds as well.  Applying Claim \ref{cl:orig-deg} on $(u_i)_{i\le i_{\max}-1}$, using the upper bound in \eqref{eq:perc-upper},
\be \label{eq:orig-deg-upper}\Pv\left( \exists i\le i_{\max}\!-\!1, d_{u_i} \ge s(y_{i+1}) \mid  d_{u_i}^r \le y_{i+1} \forall i \le i_{\max}\!-\!1  \right) \le \sum_{i=0}^{i_{\max}-1} c \exp\{ -y_{i+1}/4 \}\ee
which tends to zero with $n$ since it is a constant times the first term.

We can rewrite the probability in \eqref{eq:uniqueedges} as 
	\be \label{eq:uniquefrac} \frac{1 - \bP(1 \text{ edge } u_i \leftrightarrow u_{i+1}) - \bP( u_i \not\leftrightarrow u_{i+1})}{1 - \bP( u_i \not\leftrightarrow u_{i+1})}. \ee
	We investigate the probabilities in \eqref{eq:uniquefrac} separately starting with the probability that there is exactly one edge between those two vertices. We lower bound the probability that precisely the $j$th half-edge of $u_i$ connects to $u_{i+1}$, and the others do not. Note that for the $k$th  half-edge the probability of not connecting to $u_{i+1}$ is  at least $(\he - d_{u_{i+1}} -2(k-1))/(\he - 2(k-1) - 1)\ge 1- d_{v_{i+1}}/\he$. Thus
\be \label{eq:prob1}
	\bP(1 \text{ edge } u_i \leftrightarrow u_{i+1}) \geq \sum_{j=1}^{d_{u_i}} \frac{d_{u_{i+1}}}{\cH_n-2d_{u_{i}}} \prod_{k=1}^{d_{u_{i}}-1} \left(1 - \frac{d_{u_{i+1}}}{\he}\right) 
	\geq \frac{d_{u_{i+1}}d_{u_{i}}}{\cH_n-2d_{u_{i}}}\left(1 - \frac{d_{u_{i+1}}}{\he}\right)^{d_{u_{i}}}.
	\ee
	Next we bound the probability that there is no edge between two consecutive vertices, both from above and below. 
	\begin{align} \label{eq:prob0up}
	\bP( u_i \not\leftrightarrow u_{i+1}) & \leq \prod_{k=1}^{\fl{d_{u_{i}}/2}} \left(1 - \frac{d_{u_{i+1}}}{\he-2d_{u_{i}}}\right) = \left(1 - \frac{d_{u_{i+1}}}{\he-2d_{u_{i}}}\right)^{d_{u_i}/2},\\ \label{eq:prob0low}
	\bP(u_i \not\leftrightarrow u_{i+1}) & \geq \prod_{k=1}^{d_{u_{i}}} \left(1 - \frac{d_{u_{i+1}}}{\he}\right) = \left(1 - \frac{d_{u_{i+1}}}{\he}\right)^{d_{u_i}}.
	\end{align}
	Using series expansion for equations \eqref{eq:prob1}-- \eqref{eq:prob0low}, we obtain an upper bound on \eqref{eq:uniquefrac}:
	\be \label{eq:uniineq} \frac{1 - \bP(1 \text{ edge } u_i \leftrightarrow u_{i+1}) - \bP( u_i \not\leftrightarrow u_{i+1})}{1 - \bP( u_i \not\leftrightarrow u_{i+1})} \leq \frac{2d_{u_{i}}d_{u_{i+1}}}{\he}\Big/\left(1-\frac{d_{u_{i}}d_{u_{i+1}}}{4(\he-d_{u_i})}\right) .\ee
	By \eqref{eq:orig-deg-upper}, whp,  $d_{u_{i+1}} \leq s(y_{i+1})$, and further, by the definition of $i_{\max}$ in \eqref{eq:max-i-deg}, and $s(\cdot)$ in \eqref{eq:y(x)},    $s(y_{i_{\max}-k}) \le n^{\al(\tau-2)^k(1+o(1))}$ for $k\in \{1,2\}$.  Thus, for all $i\le i_{\max}\!-\!2$, whp
	\be \nonumber \frac{d_{u_{i}}d_{u_{i+1}}}{\he} \leq c \frac{s(y_{i_{\max}-2}) s(y_{i_{\max}-1})   }{n} \leq n^{\al(\tau-2)(\tau-1)(1+o(1))-1} .\ee
	The rhs converges to zero as $n$ tends to infinity as long as $\al<((\tau-2)(\tau-1))^{-1}$, which we have assumed in Assumption \ref{ass:degree-dist}.
\end{proof}
\begin{proof}[Proof of Theorem \ref{thm:erasedthm}, upper bound]
As mentioned before, we construct a path in $\ECMDL$ to connect $u,v$. For this it is enough to construct a path with all its edges begin simple edges in $\CMDL$.  This path has a huge overlap with the path in the upper bound of Theorem \ref{thm:mainthm}. Namely, the segments  between $u, u_{K_n}$ and $v, v_{K_n}$ are whp using simple edges by the coupling to BP trees.
The segments between $u_{K_n}, u_{i_{\max}-1}$ and $v_{K_n}, v_{i_{\max}-1}$ are whp using simple  edges again so they survives erasure. 
Next we connect $u_{i_{\max}-1}$ to $v_{i_{\max}-1}$. Note that the constructed path in $\CMDL$ might use multiple edges so we need a different connecting path. 
However, $q_{i_{\max}-1}$ for $q\in\{u,v\}$ are vertices with degree at least $n^{\al(\tau-2)(1+o(1))}$. In the proof of Theorem \ref{thm:mainthm}, we created a 3-hop connection between $u_{i_{\max}}=u^\star$ and $v_{i_{\max}}=v^\star$ in the \emph{percolated} graph, see \eqref{eq:conn-bound}--\eqref{eq:connectingtwopaths}.
When we erase a multiple edge, we keep one edge independently of its edge-length. Thus, from every multiple edge at least one edge remains. Hence, an analogous construction as in \eqref{eq:conn-bound}--\eqref{eq:connectingtwopaths} can be repeated, not for the percolated graph but for the original graph, developing a 5-hop connection between $u_{i_{\max}-1}, v_{i_{\max}-1}$. The edge-lengths on this path are simply i.i.d. copies of $L$.
Thus,
\be \label{eq:erasedineq} d_L^e(u, v) \leq d(u, u_{K_n}) + d(v, v_{K_n}) +  2\sum_{i = 0}^{i_{\max}-2} \xi(y_i,y_{i+1}) + \sum_{i = 1}^{5} L_i .\ee
For all $\ve > 0$ 
\be \nonumber \lim_{n \to \infty} \bP\Bigg( \sum_{i = 1}^{5} L_i \leq \ve/3 \; \an \Bigg) = 0 .\ee
Then we treat the terms in \eqref{eq:erasedineq} similarly as we did in the proof of Theorem \ref{thm:mainthm} (see \eqref{eq:upperhalfpath} and \eqref{eq:pyi}--\eqref{eq:upperpathbound}) finishes the proof.
\end{proof}

\bibliographystyle{abbrv}
\bibliography{refsthesis}

\end{document}